\documentclass{article}
 
\usepackage[neurips]{optional}

\PassOptionsToPackage{numbers, compress}{natbib}

\opt{arxiv}{
	\usepackage[preprint]{arxiv_neurips_2019}
}
\opt{neurips}{
	\usepackage[final]{neurips_2019}
}

\usepackage{amsthm}
\usepackage{amssymb,amsmath}
\usepackage[utf8]{inputenc} 
\usepackage[T1]{fontenc}    
\usepackage[colorlinks,citecolor=blue,urlcolor=black,linkcolor=blue,pdfborder={0 0 0}]{hyperref}       
\usepackage{url}            
\usepackage{booktabs}       
\usepackage{amsfonts}       
\usepackage{nicefrac}       
\usepackage{microtype}      
\usepackage{todonotes}  
\usepackage{xspace}
\usepackage{float}
\usepackage{wrapfig}

\usepackage[capitalize]{cleveref}  
\crefname{nlem}{Lemma}{Lemmas}
\crefname{nprop}{Proposition}{Propositions}
\crefname{ncor}{Corollary}{Corollaries}
\crefname{nthm}{Theorem}{Theorems}
\crefname{exa}{Example}{Examples}
\crefname{assumption}{Assumption}{Assumptions}
\crefname{appendix}{Appendix}{Appendices}
\crefname{equation}{}{}
\usepackage{autonum}  

\def\balign#1\ealign{\begin{align}#1\end{align}}
\def\baligns#1\ealigns{\begin{align*}#1\end{align*}}
\def\balignat#1\ealign{\begin{alignat}#1\end{alignat}}
\def\balignats#1\ealigns{\begin{alignat*}#1\end{alignat*}}
\def\bitemize#1\eitemize{\begin{itemize}#1\end{itemize}}
\def\benumerate#1\eenumerate{\begin{enumerate}#1\end{enumerate}}

\newenvironment{talign*}
 {\csname align*\endcsname}
 {\endalign}
\newenvironment{talign}
 {\csname align\endcsname}
 {\endalign}

\def\balignst#1\ealignst{\begin{talign*}#1\end{talign*}}
\def\balignt#1\ealignt{\begin{talign}#1\end{talign}}



\let\originalleft\left
\let\originalright\right
\renewcommand{\left}{\mathopen{}\mathclose\bgroup\originalleft}
\renewcommand{\right}{\aftergroup\egroup\originalright}


\def\tinycitep*#1{{\tiny\citep*{#1}}}
\def\tinycitealt*#1{{\tiny\citealt*{#1}}}
\def\tinycite*#1{{\tiny\cite*{#1}}}
\def\smallcitep*#1{{\scriptsize\citep*{#1}}}
\def\smallcitealt*#1{{\scriptsize\citealt*{#1}}}
\def\smallcite*#1{{\scriptsize\cite*{#1}}}

\def\R{\mathbb{R}}

\def\Q{\mathbb{Q}}
\def\N{\mathbb{N}}

\def\<{\left\langle} 
\def\>{\right\rangle}


\def\half{\frac{1}{2}}






\def\E{\mbb{E}} 

\def\P{\mbb{P}} 

\DeclareMathOperator{\Tr}{Tr} 
\providecommand{\argmin}{\mathop\mathrm{arg min}}

\providecommand{\diag}{\mathop\mathrm{diag}}
\providecommand{\tr}{\mathop\mathrm{tr}}

\def\rank#1{\mathrm{rank}({#1})}


\ifdefined\nonewproofenvironments\else
\ifdefined\ispres\else
\newtheorem{theorem}{Theorem}

\renewenvironment{proof}{\noindent\textbf{Proof}\hspace*{1em}}{\qed\\}
\newenvironment{proof-sketch}{\noindent\textbf{Proof Sketch}
  \hspace*{1em}}{\qed\bigskip\\}
\newenvironment{proof-idea}{\noindent\textbf{Proof Idea}
  \hspace*{1em}}{\qed\bigskip\\}
\newenvironment{proof-of-lemma}[1][{}]{\noindent\textbf{Proof of Lemma {#1}}
  \hspace*{1em}}{\qed\\}
\newenvironment{proof-of-theorem}[1][{}]{\noindent\textbf{Proof of Theorem {#1}}
  \hspace*{1em}}{\qed\\}
\newenvironment{proof-attempt}{\noindent\textbf{Proof Attempt}
  \hspace*{1em}}{\qed\bigskip\\}


\fi

\newtheorem{proposition}[theorem]{Proposition}

\fi

\newtheorem{Theorem}{Theorem}
\newtheorem{Proposition}{Proposition}

\newtheorem{Lemma}{Lemma}


\newcommand{\End}{\mathrm{End}}
\renewcommand{\tr}{\mathrm{Tr}}
\newcommand{\field}[1]{\Gamma \big(#1 \big)}

\newcommand{\St}{\mathcal S}              
\newcommand{\metric}[2]{\left< #1, #2 \right>} 
\renewcommand{\H}{{\mathcal H}}          
\renewcommand{\P}{{\mathbb P}}          
\renewcommand{\R}{\mathbb R}               
\newcommand{\D}{{\mathcal D}}          
\renewcommand{\Q}{\mathbb Q}               
\renewcommand{\E}{\mathbb E}               
\newcommand{\h}{\mathcal H}              
\newcommand{\M}{\mathcal M}              
\renewcommand{\N}{\mathcal N}              
\newcommand{\X}{\mathcal X}              
\newcommand{\hvf}[1]{\setbox0=\hbox{$#1$}%
  \ifdim\wd0>1em\widehat{#1}\else\hat{#1}\fi} 
\newcommand{\dd}{\mathrm{d}}
\newcommand{\diff}{\mathrm{d}}
\newcommand{\stein}{k^0}
\newcommand{\defn}{ \equiv}
 \newcommand{\leqsim}{\,\mbox{{\scriptsize $\stackrel{<}{\sim}$}}\,}

\newcommand{\IF}{\operatorname{IF}}
\newcommand{\steinop}{\mathcal{S}}
\newcommand{\sd}{\operatorname{SD}} 
\newcommand{\sm}{\operatorname{SM}} 
\newcommand{\dsm}{\operatorname{DSM}} 
\newcommand{\ksd}{\operatorname{KSD}} 
\newcommand{\dksd}{\operatorname{DKSD}} 
\newcommand{\kl}{\operatorname{KL}} 

\title{Minimum Stein Discrepancy Estimators}

\author{%
  Alessandro Barp \\
  Department of Mathematics\\
  Imperial College London\\
  \texttt{a.barp16@imperial.ac.uk} \\
   \And
   Fran\c{c}ois-Xavier Briol \\
  Department of Statistical Science\\
   University College London \\
   \texttt{f.briol@ucl.ac.uk} \\
   \AND
   Andrew B. Duncan \\
  Department of Mathematics\\
   Imperial College London \\
   \texttt{a.duncan@imperial.ac.uk} \\
   \And
   Mark Girolami \\
  Department of Engineering\\
   University of Cambridge \\
   \texttt{mag92@eng.cam.ac.uk} \\
    \And
     Lester Mackey \\
   Microsoft Research\\
   Cambridge, MA, USA \\
    \texttt{lmackey@microsoft.com} 
}

\begin{document}

\maketitle

\begin{abstract}
When maximum likelihood estimation is infeasible, one often turns to score matching, contrastive divergence, or minimum probability flow to obtain tractable parameter estimates. We provide a unifying perspective of these techniques as \emph{minimum Stein discrepancy estimators}, and use this lens to design new diffusion kernel Stein discrepancy (DKSD) and diffusion score matching (DSM) estimators with complementary strengths. We establish the consistency, asymptotic normality, and robustness of DKSD and DSM estimators, then derive stochastic Riemannian gradient descent algorithms for their efficient optimisation. The main strength of our methodology is its flexibility, which allows us to design estimators with desirable properties for specific models at hand by carefully selecting a Stein discrepancy.
 We illustrate this advantage for several challenging problems for score matching, such as non-smooth, heavy-tailed or light-tailed densities.  
\end{abstract}

\section{Introduction}

Maximum likelihood estimation \citep{Casella2001} is a de facto standard for estimating the unknown parameters in a statistical model $\{\P_\theta : \theta \in \Theta\}$.  However, the computation and optimization of a likelihood typically requires access to the normalizing constants of the model distributions. This poses difficulties for complex statistical models for which direct computation of the normalisation constant would entail prohibitive multidimensional integration of an unnormalised density.  Examples of such models arise naturally in modelling images \cite{Gutmann2012,koster2010two}, natural language \cite{mnih2012fast}, Markov random fields \cite{roth2009fields} and nonparametric density estimation \cite{Sriperumbudur2017density,wenliang2018learning}. To by-pass this issue, various approaches have been proposed to address parametric inference for unnormalised models, including Monte Carlo maximum likelihood \cite{geyer1994convergence}, contrastive divergence \cite{Hinton2002}, minimum probability flow learning \cite{sohl2009minimum}, noise-contrastive estimation \cite{ceylan2018conditional,gutmann2010noise,Gutmann2012} and score matching (SM) \cite{Hyvarinen2006,Hyvarinen2007}.  

The SM estimator is a minimum score estimator \cite{Dawid2016} based on the Hyv\"{a}rinen scoring rule that avoids normalizing constants by depending on $\P_\theta$ only through the gradient of its log density $\nabla_x \log p_{\theta}$. SM estimators have proven to be a widely applicable method for estimation for models with unnormalised smooth positive densities, with generalisations to bounded domains \cite{Hyvarinen2007} and  compact Riemannian manifolds \cite{mardia2016score}. Despite the flexibility of this approach, SM has three important and distinct limitations. Firstly, as the Hyv\"{a}rinen score depends on the Laplacian of the log-density, SM estimation will be expensive in high dimension and will break down for non-smooth models or for models in which the second derivative grows very rapidly.  Secondly, as we shall demonstrate, SM estimators can behave poorly for models with heavy tailed distributions. Thirdly, the SM estimator is not robust to outliers in many applications of interest.  Each of these situations arise naturally for energy models, particularly product-of-experts models and ICA models \cite{hyvarinen1999sparse}.  

In a separate strand of research, new approaches have been developed to measure discrepancy between an unnormalised distribution and a sample. In \cite{Gorham2015,Gorham2016,mackey2016multivariate,Gorham2017}, it was shown that Stein's method can be used to construct discrepancies that control weak convergence of an empirical measure to a target.  

In this paper we consider minimum Stein discrepancy (SD) estimators and show that SM, minimum probability flow and contrastive divergence estimators are all special cases.  Within this class we focus on SDs constructed from reproducing kernel Hilbert Spaces (RKHS), establishing the consistency, asymptotic normality and robustness of these estimators. We demonstrate that these SDs are appropriate for estimation of non-smooth distributions and heavy- or light- tailed distributions.  The remainder of the paper is organized as follows. In \cref{mininimum-stein-discrepancy-estimators} we introduce the class of minimum SD estimators, then investigate asymptotic properties of SD estimators based on kernels in \cref{large-sample-theory}, demonstrating consistency and asymptotic normality under general conditions, as well as conditions for robustness. \cref{numerics-dksd} presents three toy problems where SM breaks down, but our new estimators are able to recover the truth. All proofs are in the supplementary materials.


\section{Minimum Stein Discrepancy Estimators}\label{mininimum-stein-discrepancy-estimators}

Let $\mathcal P_\X$ the set of Borel probability measures on $\X$. Given identical and independent (IID) realisations from $\Q \in \mathcal P_\X$ on an open subset $\X \subset \R^d$, the objective is to find a sequence of measures $\mathbb{P}_{n}$ that approximate $\mathbb{Q}$ in an appropriate sense.
More precisely we will consider a family $\mathcal{P}_{\Theta} = \lbrace \mathbb{P}_{\theta} : \theta \in \Theta \rbrace \subset \mathcal{P}_\X$ together with a
function $D:\mathcal{P}_{\X}\times \mathcal{P}_{\X} \rightarrow \R_{+}$  which quantifies the discrepancy between any two measures in $\mathcal{P}_{\X}$,
and wish to estimate an optimal parameter $\theta^*$ satisfying
$
 \theta^* \in \arg\min_{\theta \in \Theta}D(\Q \| \mathbb{P}_\theta).
$
In practice, it is often difficult to compute the discrepancy $D$ explicitly, and it is useful to consider a random approximation $\hat{D}( \lbrace X_i \rbrace_{i=1}^{n} \| \mathbb{P}_{\theta})$ based on a IID sample $X_1, \ldots, X_n \sim \Q$, such that $\hat{D}( \lbrace X_i \rbrace_{i=1}^{n} \| \mathbb{P}_{\theta}) \xrightarrow{a.s.} D(\Q \| \mathbb{P}_\theta)$ as $n \rightarrow \infty$.  We then consider the sequence of estimators
\begin{talign}\label{minimum_distance_estimator}
\hat{\theta}^D_n  \in  \text{argmin}_{\theta \in \Theta} \hat D \left ( \{X_i\}_{i=1}^n \|\mathbb{P}_{\theta} \right).
\end{talign}
The choice of discrepancy will impact the consistency, efficiency and robustness of the estimators. 
  Examples of such estimators include minimum distance estimators \cite{Basu2011,Pardo2005} where the discrepancy will be a metric on probability measures,  including minimum maximum mean discrepancy (MMD) estimation \cite{Dziugaite2015,Li2015GMMN,Briol2019} and minimum Wasserstein estimation \cite{Frogner2015,Genevay2017,Bernton2017}.  

More generally, minimum scoring rule estimators \cite{Dawid2016} arise from proper scoring rules, for example Hyv\"{a}rinen, Bregman and Tsallis scoring rules. These discrepancies are often statistical divergences, i.e., $D(\mathbb{Q}\|\mathbb{P}) = 0 \Leftrightarrow \mathbb{P} = \mathbb{Q}$ for all $\mathbb{P},\mathbb{Q}$ in a subset of $ \mathcal{P}_{\X}$.
 Suppose that $\mathbb{P}_{\theta}$ and $\mathbb{Q}$ are absolutely continuous with respect to a common measure $\lambda$ on $\X$, with respective positive densities $p_\theta$ and $q$. 
 Then a well-known statistical divergence is the Kullback-Leibler (KL) divergence $\operatorname{KL}(\mathbb{Q}\|\mathbb{P}_\theta) \defn \int_{\X}  \log(\mathrm{d}\mathbb{Q}/\mathrm{d}\mathbb{P}_\theta) \mathrm{d}\mathbb{Q}=
\int_{\X} \log q \dd \Q  - \int_{\X} \log p_{\theta} \dd \Q $.
Minimising $\operatorname{KL}(\Q \|\P_{\theta})$
  is equivalent to maximising  $\int_{\X} \log p_{\theta} \dd \Q  $, which can be estimated using the likelihood  $ \hvf{ \operatorname{KL}}  ( \{X_i\}_{i=1}^n \|\mathbb{P}_{\theta})\defn \frac 1 n \sum_{i=1}^n \log p_{\theta}(X_i )$.
Informally, we see that minimising the KL-divergence is equivalent to performing maximum likelihood estimation. 

For our purposes we are interested in discrepancies that can be evaluated when  $\mathbb{P}_\theta$ is only known up to normalisation, precluding the use of KL divergence.  We instead consider a related class of discrepancies based on integral probability pseudometric (IPM) \citep{Muller1997} and Stein's method \citep{Barbour2005,Chen2011,Stein1972}. Let $\Gamma(\mathcal Y) \defn \Gamma(\mathcal{X}, \mathcal Y) \defn \{ f: \X \to \mathcal Y \}$. A map $\mathcal{S}_{\mathbb{P}}:\mathcal G \subset \Gamma(\R^d) \to \Gamma(\R)$ is a Stein operator over a Stein class $\mathcal{G}$ if $\int_{\X} \mathcal{S}_{\mathbb{P}}[f] \diff  \mathbb{P}  =  0 \; \forall f \in \mathcal{G}$ for any $\mathbb{P}$.
 We can then define an associated \emph{Stein discrepancy} (SD) \citep{Gorham2015} using an IPM with entry-dependent function space $\mathcal{F} \defn  \mathcal{S}_{\mathbb{P}_\theta}[\mathcal G]$
\begin{talign}
\sd_{\steinop_{\P_\theta}[\mathcal G]}\left(\mathbb{Q}  \| \mathbb{P}_\theta  \right)\defn  
\sup_{f \in \mathcal{S}_{\mathbb{P}_\theta}[\mathcal G]} \left|  \int_{\X} f \dd \mathbb{P}_\theta - \int_{\X} f \dd \mathbb{Q}  \right|
 = 
\sup_{g \in \mathcal{G}} \left| \int_{\X} \mathcal{S}_{\mathbb{P}_\theta}[g] \dd \mathbb{Q}  \right|. \label{Stein_discrepancy}
\end{talign}
The Stein discrepancy depends on $\mathbb{Q}$ only through expectations, and does not require the existence of a density, therefore permitting $\mathbb{Q}$ to be an empirical measure.
If $\mathbb{P}$ has a $C^1$ density $p$ on $\mathcal{X}$, one can consider the Langevin-Stein discrepancy arising from the Stein operator $\mathcal T_{p} [g] \defn \metric{\nabla\log p}{ g} +   \nabla \cdot g$~\citep{Gorham2015,Gorham2016}. 
 In this case, the Stein discrepancy will not depend on the normalising constant of $p$.
 
In this paper, for an arbitrary $m \in \Gamma(\mathbb{R}^{d\times d})$ which we call \emph{diffusion matrix},
 we shall consider the more general \emph{diffusion Stein operators} \citep{Gorham2016}: 
   $\steinop_{p}^m[g]  \defn (1/p) \nabla \cdot\left( {p} mg\right)$ , $\mathcal S_{p}^m[A] \defn   (1/p) \nabla \cdot \left( {p} mA \right),$ where $g \in \Gamma(\R^d)$, $A\in \Gamma(\mathbb{R}^{d\times d})$, 
and the associated \emph{minimum Stein discrepancy estimators} which minimise \cref{Stein_discrepancy}.
 As we will only have access to a sample $\{X_i\}_{i=1}^n \sim \Q$, we will focus on the estimators minimising an approximation  $\hvf \sd_{\steinop_{\P_{\theta}}[\mathcal G]}(\{X_i\}_{i=1}^n\|\P_\theta)$ based on a $U$-statistic of the $\Q$-integral:
\begin{talign}\label{minimum_Stein_estimator}
\hat{\theta}^{\text{Stein}}_n & \defn  \text{argmin}_{\theta \in \Theta} \hvf \sd_{\steinop_{\P_{\theta}}[\mathcal G]} (\{X_i\}_i^n\|\P_\theta).
\end{talign}
Related and complementary approaches to inference using SDs include the nonparametric estimator of \cite{Li2018implicit}, the density ratio approach of \cite{Liu2018Fisher} and the variational inference algorithms of \cite{Ma2017,Ranganath2016}. We now highlight several instances of SDs which will be studied in detail in this paper.


\subsection{Example 1: Diffusion Kernel Stein Discrepancy Estimators}\label{dksd-estimators}

A convenient choice of Stein class is the unit ball of reproducing kernel Hilbert spaces (RKHS) \citep{Berlinet2004} of a scalar kernel function $k$.
For the Langevin Stein operator $\mathcal T_p$, the resulting \emph{kernel Stein discrepancy} (KSD) first appeared in \cite{Oates2017} and has since been considered extensively in the context of hypothesis testing, measuring sample quality and approximation of probability measures in \cite{Chen2018,Chen2019,Chwialkowski2016,Detommaso2018,Gorham2017,Liu2016,Liu2016testing,Liu2017geomSVGD}. In this paper, we consider a more general class of discrepancies based on the diffusion Stein operator and matrix-valued kernels.

Consider an RKHS $\mathcal{H}^d$ of functions $f \in \Gamma( \mathbb R^d)$ with (matrix-valued) kernel $K  \in \Gamma( \mathcal X \times \mathcal X ,  \mathbb R^{d \times d })$, 
$K_x \defn K(x,\cdot)$ (see \cref{vector-RKHS} and \ref{separable-kernels} for further details). 
The Stein operator $\mathcal S^m_p[f]$ induces an operator  $\mathcal S_p^{m,2} \mathcal S_p^{m,1}:\field{\X\times \X, \R^{d\times d}} \to \field{\R}$ which acts  first on the first variable and then on the second one.
We briefly mention two simple examples of matrix kernels constructed from scalar kernels. 
If we want the components of $f$ to be orthogonal, we can use the diagonal kernel (i) $K = \mbox{diag}(\lambda_1 k^1,\ldots,\lambda_d k^d)$ where $\lambda_i > 0$ and $k^i$ is a $C^2$ kernel on $\X$, for $i=1,\ldots, d$; 
else we can ``correlate" the components by setting (ii) $K = Bk$ where  $k$ is a (scalar) kernel on $\X$ and $B$ is a (constant) symmetric positive definite matrix. 

We propose to study \emph{diffusion kernel Stein discrepancies} indexed by $K$ and $m$ (see \cref{appendix:derivations}):
 \begin{theorem}[\textbf{Diffusion Kernel Stein Discrepancy}]
 For any kernel $K$, 
 we find that 
 $\mathcal S^m_p[f](x) = \langle \mathcal S_p^{m,1}K_x,f\rangle_{\mathcal H^d}$  for any $f\in \mathcal H^d$.
Moreover if $x\mapsto \|\mathcal S^{m,1}_p K_x \|_{\mathcal H^d} \in L^1(\mathbb Q)$, 
we have 
\begin{talign}
\dksd_{K,m}(\mathbb{Q}\|\P)^2  &\;  \defn \; \sup_{\substack{h \in \mathcal{H}^d\\ \lVert h \rVert \leq 1}}\left | \int_{\X} \mathcal{S}^m_p[h]\dd\mathbb{Q}\right |^2 = \int_{\X} \int_{\X}\stein (x,y)\dd\mathbb{Q}(x)\dd\mathbb{Q}(y) 
\end{talign} 
\begin{talign} \label{stein-kernel}
\stein(x,y) \defn \mathcal{S}^{m,2}_p\mathcal{S}^{m,1}_p K(x, y)=\frac 1 {p(y)p(x)}  \nabla_y \cdot \nabla_x \cdot  \left( p(x)m(x)K(x,y) m(y)^{\top}p(y)\right).
\end{talign}
 \end{theorem}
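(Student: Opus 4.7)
The plan is to establish the variational representation of $\dksd_{K,m}$ in three stages: a reproducing-type identity for the Stein-operated kernel, a Cauchy--Schwarz saturation argument, and an explicit computation of the resulting reproducing kernel.

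First, I would establish the identity $\mathcal S^m_p[f](x) = \langle \mathcal S_p^{m,1}K_x, f\rangle_{\mathcal H^d}$ for all $f \in \mathcal H^d$. The starting point is the vector-valued reproducing property $\langle K(\cdot,x)v, f\rangle_{\mathcal H^d} = \langle v, f(x)\rangle_{\R^d}$ for $v \in \R^d$, which, by differentiating under the inner product (valid once $K$ is smooth enough that partial derivatives of kernel sections remain in $\mathcal H^d$), also encodes $\partial_i f(x)$ as the inner product of $f$ with a derivative of $K_x$. Since $\mathcal S_p^m[f](x) = p(x)^{-1}\nabla\cdot(p m f)(x)$ is a linear combination of $f(x)$ and the $\partial_i f(x)$ with coefficients determined by $p$, $m$ and their first derivatives, I can collect these into a single element of $\mathcal H^d$ obtained by applying the Stein operator to $K(\cdot, x)$ in its first argument; by construction this element is $\mathcal S_p^{m,1}K_x$, yielding the claimed reproducing formula.

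Second, I would integrate this identity against $\Q$. The hypothesis $x\mapsto \|\mathcal S_p^{m,1}K_x\|_{\mathcal H^d} \in L^1(\Q)$ lets me apply Bochner's theorem to commute integral and inner product, producing $\int_{\X}\mathcal S_p^m[f]\dd\Q = \langle \mu_\Q, f\rangle_{\mathcal H^d}$ where $\mu_\Q \defn \int_\X \mathcal S_p^{m,1}K_x\dd\Q(x) \in \mathcal H^d$. Taking absolute values, applying Cauchy--Schwarz, and observing that equality over the unit ball is attained by $f = \mu_\Q/\|\mu_\Q\|_{\mathcal H^d}$ yields $\dksd_{K,m}(\Q\|\P)^2 = \|\mu_\Q\|_{\mathcal H^d}^2$. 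I would then expand this squared norm into a double integral by invoking the reproducing-type identity once more, now in the second slot: $\|\mu_\Q\|_{\mathcal H^d}^2 = \int_\X\langle \mu_\Q, \mathcal S_p^{m,1}K_y\rangle_{\mathcal H^d}\dd\Q(y) = \int_\X\int_\X\langle \mathcal S_p^{m,1}K_x, \mathcal S_p^{m,1}K_y\rangle_{\mathcal H^d}\dd\Q(x)\dd\Q(y)$, and the inner product of two Stein-operated kernel sections evaluates to $\mathcal S_p^{m,2}\mathcal S_p^{m,1}K(x,y) = \stein(x,y)$ by a second application of the first-step identity. Finally, unfolding the definition $\mathcal S_p^m[\cdot] = p^{-1}\nabla\cdot(p m \cdot )$ in each argument produces the divergence-form expression displayed in \cref{stein-kernel}.

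The main obstacle will be the first step: verifying rigorously that $\mathcal S_p^{m,1}K_x$ is a bona fide element of $\mathcal H^d$ and that the Stein operator commutes through the RKHS inner product. This rests on the fact that partial derivatives of kernel sections remain in the RKHS provided $K$ is $C^2$ in each argument (so that the iterated Stein operator $\mathcal S_p^{m,2}\mathcal S_p^{m,1}K$ is later well defined), combined with sufficient regularity of $p$, $\nabla\log p$ and $m$ against the relevant kernel norms. Once this regularity bookkeeping, detailed in the referenced appendix on vector-valued RKHS, is in place, the Bochner exchange, the Cauchy--Schwarz saturation and the final kernel calculation all proceed routinely.
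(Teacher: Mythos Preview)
Your proposal is correct and follows essentially the same approach as the paper: the reproducing identity via the vector-valued reproducing property plus a derivative-reproducing lemma, Bochner integrability to pass to the mean embedding $\mu_\Q$, and then the explicit divergence computation for $\stein$. The only cosmetic difference is that the paper routes the supremum computation through a tensor-product/Hilbert--Schmidt formulation (writing $\langle h\otimes h, \steinop_p K_x\otimes \steinop_p K_y\rangle_{HS}$ and taking the HS norm) before reducing to $\|\mu_\Q\|_{\mathcal H^d}^2$, whereas you go there directly via Cauchy--Schwarz; both arguments are equivalent and yours is arguably more streamlined.
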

In order to use these for minimum SD estimation, we propose the following $U$-statistic approximation:
\begin{talign}\label{dksd-approx}
\hvf\dksd_{K,m}( \{X_i\}_{i=1}^n\|\P_{\theta})^2 \;  = \; \frac{2}{n(n-1)} \sum_{1 \leq i < j \leq n}\stein_{\theta}(X_i, X_j)
	\;  = \; \frac{1}{n(n-1)} \sum_{i \neq j }\stein_{\theta}(X_i, X_j),
\end{talign} 
with associated estimators:
$ \hat \theta_n^{\dksd}  \in  \text{argmin}_{\theta \in \Theta} \hvf\dksd_{K,m}( \{X_i\}_{i=1}^n\|\P_{\theta})^2$.

As the proof shows, the Stein kernel $\stein$ is indeed a (scalar) kernel obtained from the feature map $\phi:\X \to \H^d$, $\phi(x) \defn   \steinop_p^{m,1}[K]|_x$.
 For $K=Ik$, $m=Ih$, $\dksd$ is a $\ksd$ with scalar kernel $h(x)k(x,y)h(y)$, and if $h=1$ our objective becomes the usual Langevin-based KSD of \cite{Chwialkowski2016,Gorham2017,Liu2016testing,Oates2017} (see \cref{app:special-cases-dksd}). The work of \cite{liu2017learning} discussed the potential of optimizing the KSD with gradient descent but did not evaluate its merits. In the sections to follow, we will see the advantages conferred by introducing more flexible diffusion operators, matrix kernels, and Riemannian optimization.

Now that our DKSD estimators are defined, an important remaining question is under which conditions can $\dksd$ discriminate distinct probability measures. To answer, we will need several definitions.
 We say a matrix kernel $K$ is in the Stein class of $\Q$ if $\int_\X  \steinop_q^{m,1}[K] \dd \Q=0$, and that it is strictly integrally positive definite (IPD) if 
$ \int_{\X \times \X} \dd \mu^{\top}(x) K(x,y) \dd \mu(y) >0 $
 for any finite non-zero signed vector Borel measure  $\mu$. 
 From $\mathcal S^m_p[f](x) = \langle \mathcal S_p^{m,1}K_x,f\rangle_{\mathcal H^d}$ we have that $f \in \mathcal H^d$ is in the Stein class (i.e., $\int_\X \mathcal S^m_q[f] \dd \Q=0$) when $K$ is also in the class. 
Setting $s_p \defn m^{\top}\nabla \log p \in \Gamma( \R^{d })$: 
\begin{Proposition}[\textbf{DKSD as a Statistical Divergence}] \label{DKSD-divergence} Suppose $K$ is IPD and in the Stein class of $\Q$, and $m(x)$ is invertible. 
If  $s_p -s_q \in L^1(\Q)$,
then $\dksd_{K,m}(\Q\|\P)^2=0$ iff $\Q=\P$.
\end{Proposition}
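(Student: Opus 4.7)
The plan is to exploit the feature-map representation established in the preceding theorem: writing $\phi(x)\defn \mathcal{S}_p^{m,1}K|_x\in\mathcal{H}^d$, we have $\stein(x,y)=\langle\phi(x),\phi(y)\rangle_{\mathcal{H}^d}$ and hence $\dksd_{K,m}(\Q\|\P)^2=\|\int\phi\dd\Q\|_{\mathcal{H}^d}^2$. Consequently, $\dksd_{K,m}(\Q\|\P)^2=0$ iff the Bochner mean $\int\phi\dd\Q$ vanishes in $\mathcal{H}^d$, which by the reproducing identity $\mathcal{S}_p^m[h](x)=\langle\phi(x),h\rangle_{\mathcal{H}^d}$ is equivalent to $\int\mathcal{S}_p^m[h]\dd\Q=0$ for every $h\in\mathcal{H}^d$.

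Since $K$ lies in the Stein class of $\Q$, every $h\in\mathcal{H}^d$ also satisfies $\int\mathcal{S}_q^m[h]\dd\Q=0$. A direct product-rule expansion gives $\mathcal{S}_p^m[h]-\mathcal{S}_q^m[h]=(\nabla\log p-\nabla\log q)^\top m h=(s_p-s_q)^\top h$, so the vanishing of DKSD is equivalent to $\int(s_p-s_q)^\top h\dd\Q=0$ for all $h\in\mathcal{H}^d$. The $L^1(\Q)$ hypothesis makes $\dd\mu\defn(s_p-s_q)\dd\Q$ a well-defined finite signed vector Borel measure, and the goal reduces to showing $\mu=0$.

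To activate IPD, I would test against $h_\star\defn\int K(\cdot,y)(s_p-s_q)(y)\dd\Q(y)$, which is a bona fide element of $\mathcal{H}^d$ as a Bochner integral under the mild side condition that $y\mapsto\sqrt{(s_p-s_q)(y)^\top K(y,y)(s_p-s_q)(y)}$ lies in $L^1(\Q)$. Substituting $h_\star$ into the identity above yields $\iint \dd\mu(x)^\top K(x,y)\dd\mu(y)=0$, and the IPD hypothesis then forces $\mu=0$. Invertibility of $m$ promotes this to $\nabla\log p=\nabla\log q$ $\Q$-almost everywhere; because $q>0$, the equality holds $\lambda$-a.e. on $\X$, and integrating on (each connected component of) $\X$ together with the shared normalisation $\int p\dd\lambda=\int q\dd\lambda=1$ pins the additive constant at zero, giving $p=q$. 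The converse direction is immediate: when $\Q=\P$, the Stein-class property of $K$ yields $\int\phi\dd\P=0$, so $\dksd_{K,m}(\P\|\P)^2=0$.

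The main obstacle I anticipate is the third step: IPD is phrased via arbitrary signed vector measures, whereas our conclusion about $\mu$ comes through RKHS pairings, so one needs a bridge between the two. Verifying that $h_\star$ is a legitimate RKHS element supplies this bridge; should the Bochner integrability fail, one can instead approximate $\mu$ by finitely supported signed vector measures $\mu_N$ for which $h_N\defn\int K(\cdot,y)\dd\mu_N(y)$ is manifestly in $\mathcal{H}^d$, and pass to the limit via dominated convergence. The closing step upgrading the $\lambda$-a.e. gradient identity to $\P=\Q$ is classical but tacitly requires $\X$ to be connected (or the argument applied componentwise).
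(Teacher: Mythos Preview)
Your argument is correct. The paper takes a more direct algebraic route: it manipulates the double integral $\iint \mathcal{S}_p^{m,2}\mathcal{S}_p^{m,1}K\,\dd\Q\dd\Q$ by subtracting the Stein identity for $\Q$ in each slot in turn, obtaining the \emph{equality}
\[
\dksd_{K,m}(\Q\|\P)^2=\int_\X\int_\X \dd\mu(x)^{\top}K(x,y)\,\dd\mu(y),\qquad \dd\mu=(s_p-s_q)\,\dd\Q,
\]
and then invokes IPD. By contrast, you pass through the RKHS feature map: $\dksd=0$ iff the Bochner mean vanishes iff $\int(s_p-s_q)^\top h\,\dd\Q=0$ for all $h\in\mathcal{H}^d$, and then you single out the test $h_\star=\int K(\cdot,y)\,\dd\mu(y)$ to recover the same bilinear form. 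Both reach the IPD step identically and the closing argument ($\mu=0\Rightarrow\nabla\log p=\nabla\log q\Rightarrow p=q$) is the same. The paper's approach buys two things: it avoids the side check that $h_\star\in\mathcal{H}^d$ (which you correctly flag as needing Bochner integrability or an approximation workaround), and it delivers an exact formula for $\dksd^2$ in terms of $\mu$ rather than merely the implication for the zero case --- an identity the paper reuses in deriving the information tensor. Your approach, on the other hand, makes the role of the RKHS more transparent and would generalise more readily to settings where the Stein kernel is not given by an explicit bilinear expression.
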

See \cref{Derivations-DKSD} for the proof. Note that this proposition generalises Proposition 3.3 from \cite{Liu2016testing} to a significantly larger class of SD. For the matrix kernels introduced above, the proposition below shows that $K$ is IPD when its associated scalar kernels are;
a well-studied problem \citep{Sriperumbudur2009}.
\begin{Proposition}[\textbf{IPD Matrix Kernels}]\label{IPD-matrix-kernels}
 (i) Let $K=\text{diag}(k^1,\ldots,k^d)$. Then $K$ is IPD iff each kernel $k^i$ is IPD.
(ii) Let $K=Bk$ for $B$ be symmetric positive definite. Then $K$ is IPD iff $k$ is IPD.
\end{Proposition}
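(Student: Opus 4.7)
The plan is to reduce the vector-measure IPD condition for $K$ to an ordinary (scalar-measure) IPD condition for the scalar components by algebraically manipulating the bilinear form $\int d\mu^\top(x)\, K(x,y)\, d\mu(y)$ in each of the two structural cases.

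For (i), I would expand the quadratic form against the diagonal structure to obtain
\begin{talign*}
\int_{\X\times\X} d\mu^\top(x)\, K(x,y)\, d\mu(y) = \sum_{i=1}^d \int_{\X\times\X} k^i(x,y)\, d\mu_i(x)\, d\mu_i(y),
\end{talign*}
where $\mu=(\mu_1,\ldots,\mu_d)$ is a signed vector Borel measure. For the ``if'' direction, IPD of each $k^i$ makes every summand nonnegative, and at least one is strictly positive because $\mu\neq 0$ forces some $\mu_i\neq 0$. For the ``only if'' direction, given any nonzero scalar signed measure $\nu$ and any index $i$, I would apply the hypothesis to the vector measure $\mu$ supported in the $i$th coordinate ($\mu_i=\nu$, $\mu_j=0$ for $j\neq i$); this isolates the $i$th summand and yields $\int k^i\, d\nu\, d\nu>0$.

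For (ii), I would factor $B=LL^\top$ (Cholesky, with $L$ invertible since $B$ is symmetric positive definite), and rewrite
\begin{talign*}
\int_{\X\times\X} d\mu^\top(x) Bk(x,y)\, d\mu(y) = \sum_{i=1}^d \int_{\X\times\X} k(x,y)\, d\nu_i(x)\, d\nu_i(y),
\end{talign*}
where $\nu := L^\top \mu$. The ``if'' direction then follows exactly as in (i): IPD of $k$ makes every summand nonnegative, and nonvanishing of $\mu$ together with invertibility of $L^\top$ implies at least one $\nu_i$ is nonzero. For the ``only if'' direction, given any nonzero scalar signed measure $\nu$ and any vector $v\in\R^d\setminus\{0\}$, I would substitute $\mu = v\,\nu$ to obtain
\begin{talign*}
\int_{\X\times\X} d\mu^\top(x) Bk(x,y)\, d\mu(y) = (v^\top B v)\int_{\X\times\X} k(x,y)\, d\nu(x)\, d\nu(y),
\end{talign*}
and, since $v^\top B v>0$ by positive definiteness of $B$, IPD of $K$ forces $\int k\, d\nu\, d\nu>0$.

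There is no real obstacle here; the only subtlety is ensuring in each ``only if'' direction that the test vector measure is itself a nonzero signed vector Borel measure, which is immediate since placing a nonzero scalar signed measure in a single coordinate (for (i)) or multiplying by a nonzero constant vector (for (ii)) preserves measurability and non-vanishing. I would present (i) first, and then (ii), since the bilinear manipulation in (ii) is essentially the diagonalization of (i) after the Cholesky change of variable.
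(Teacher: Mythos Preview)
Your proposal is correct and follows essentially the same approach as the paper: expand the bilinear form and reduce the vector-measure IPD condition to scalar IPD via the structure of $K$. The only cosmetic differences are that the paper uses the spectral decomposition $B=R^\top D R$ in place of your Cholesky factor $B=LL^\top$, and argues the ``only if'' directions by contradiction rather than directly; your choice $\mu=v\nu$ in (ii) is in fact slightly slicker than the paper's coordinate construction.
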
 

\subsection{Example 2: Diffusion Score Matching Estimators}

A well-known family of estimators are the score matching (SM) estimators (based on the Fisher or Hyvarinen divergence)~\cite{Hyvarinen2006,Hyvarinen2007}. As will be shown below, these can be seen as special cases of minimum SD estimators. The SM discrepancy is computable for sufficiently smooth densities:
\begin{talign}
\sm(\Q \| \P)
& \defn \int_{\X} \| \nabla \log p- \nabla \log q \|^2_2 \; \dd \Q  = \int_{\X} \left( \| \nabla \log q \|^2_2+ 
\| \nabla \log p \|^2_2+2 \Delta \log p \right) \dd \Q  
\end{talign} 
where $\Delta$ denotes the Laplacian and we have used the divergence theorem. If $\P = \mathbb P_{\theta}$, the first integral above does not depend on $\theta$, and the second one does not depend on the density of $\Q$,
  so we consider the approximation $\hvf \sm ( \{X_i\}_{i=1}^n \| \P_\theta ) \defn \frac{1}{n}\sum_{i=1}^n \Delta \log p_{\theta}(X_i) + \frac{1}{2} \|\nabla \log p_{\theta}(X_i)\|^2_2$ based on an unbiased estimation for the minimiser of the SM divergence, and its estimators
$\hat{\theta}^{\text{SM}}_n \defn \text{argmin}_{\theta \in \Theta} \hvf \sm ( \{X_i\}_{i=1}^n \| \P_\theta )$, for independent random vectors $X_i \sim \Q$.
 
The SM discrepancy can also be generalised to include higher-order derivatives of the log-likelihood \citep{Lyu2009} and does not require a normalised model. We will now introduce a further generalisation that we call \emph{diffusion score matching (DSM)} which is a SD constructed from the diffusion Stein operator (see \cref{derivations-DSM}):
\begin{theorem}[\textbf{Diffusion Score Matching}] \label{prop:SM_is_Stein}
Let $\X = \mathbb{R}^d$ and consider some diffusion Stein operator $\mathcal{S}^m_p$ for some function $m \in \Gamma( \R^{d\times d})$
and the Stein class
$\mathcal{G} \defn \{g = (g_1,\ldots,g_d) \in C^1(\X, \R^d) \cap L^2(\X;\mathbb{Q}) : 
\|g\|_{L^2(\X;\mathbb{Q})} \leq 1\}$. 
If $p,q>0$ are differentiable and $s_p -s_q \in L^2(\Q)$, then we define the \emph{diffusion score matching} divergence as the Stein discrepancy,
\begin{talign}
\dsm_m(\Q\| \P) 
\defn 
\sup_{f \in \mathcal{S}_{p}[\mathcal G]} \left|  \int_{\X} f \dd\Q - \int_{\X} f\dd\P  \right|^2
= \int_{\X} \left\| m^{\top} \left(\nabla \log q - \nabla \log p\right) \right\|_{2}^2 \dd \Q.
\end{talign}
This satisfies $\dsm_m(\Q\| \P) =0$ iff $\Q=\P$
when $m(x)$ is invertible.
Moreover, if $p$ is twice-differentiable, and $q  m m^{\top} \nabla \log p , \nabla \cdot (q  m m^{\top} \nabla \log p) \in L^1(\R^d)$, then Stoke's theorem gives
\begin{talign}\label{Stoke-Score}
\dsm_m( \Q\| \P)=\int_{\X} \left( \|  m^{\top}\nabla_x\log p \|_{2}^2 +\| m^{\top} \nabla \log q \|^2_2+2\nabla \cdot\left( m m^{\top} \nabla \log p\right)\right) \dd \Q.
\end{talign} 
\end{theorem}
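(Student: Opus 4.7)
The plan is to evaluate the supremum in three steps: rewrite the objective via Stein's identity for $\mathbb{Q}$, recognize the resulting functional as an $L^2(\mathbb{Q})$ inner product, and then apply Cauchy--Schwarz. Concretely, for $g \in \mathcal{G}$ one has the pointwise identity
\begin{talign*}
\mathcal{S}_p^m[g] - \mathcal{S}_q^m[g] = \langle \nabla\log p - \nabla\log q, mg\rangle,
\end{talign*}
so if $g$ lies in the Stein class of $\mathbb{Q}$ (i.e.\ $\int \mathcal{S}_q^m[g]\,\dd\mathbb{Q}=0$), then $\int \mathcal{S}_p^m[g]\,\dd\mathbb{Q} = \int \langle s_p - s_q, g\rangle \,\dd\mathbb{Q}$ where $s_p = m^\top\nabla\log p$. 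By Cauchy--Schwarz this quantity is bounded in absolute value by $\|s_p - s_q\|_{L^2(\mathbb{Q})}\|g\|_{L^2(\mathbb{Q})}$, and the bound is attained (up to normalisation) by the choice $g \propto s_p - s_q$. Since $s_p - s_q \in L^2(\mathbb{Q})$ by hypothesis and the $C^1\cap L^2$ unit ball is $L^2(\mathbb{Q})$-dense in the $L^2(\mathbb{Q})$ unit ball, the supremum over $\mathcal{G}$ equals $\|s_p - s_q\|_{L^2(\mathbb{Q})}$. Squaring yields the identity $\dsm_m(\Q\|\P)=\int \|m^\top(\nabla\log q - \nabla \log p)\|_2^2\,\dd\Q$.

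For the divergence property, if $\dsm_m(\Q\|\P)=0$ then the non-negative integrand $\|m^\top(\nabla\log p - \nabla\log q)\|_2^2$ vanishes $\Q$-almost everywhere, and since $q>0$ this holds Lebesgue-a.e. Invertibility of $m$ gives $\nabla\log p = \nabla\log q$ a.e.\ on $\mathbb{R}^d$, and by continuity everywhere, so that $\log(p/q)$ is constant on the connected domain $\mathbb{R}^d$; normalisation of $p$ and $q$ forces this constant to be $0$, hence $\P=\Q$.

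For the computable form \cref{Stoke-Score}, I would expand the squared norm inside the integral as
\begin{talign*}
\|m^\top\nabla\log q\|^2 + \|m^\top\nabla\log p\|^2 - 2\langle mm^\top\nabla\log p,\nabla\log q\rangle,
\end{talign*}
integrate against $\Q$, and treat the cross term using $q\nabla\log q = \nabla q$ to write it as $-2\int\langle mm^\top\nabla\log p,\nabla q\rangle\,\dx$. Then the divergence theorem applied to $\nabla\cdot(q\,mm^\top\nabla\log p)$, whose $L^1$ integrability and whose integrand's $L^1$ integrability are exactly the assumed hypotheses, makes the boundary contribution vanish and delivers $-2\int\langle mm^\top\nabla\log p,\nabla q\rangle\,\dx = 2\int \nabla\cdot(mm^\top\nabla\log p)\,\dd\Q$, giving \cref{Stoke-Score}.

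The main technical obstacle is the careful justification of the boundary-free Stein/Stokes integrations: one must verify that $qmg$ has sufficient decay to ensure $\int\mathcal{S}_q^m[g]\,\dd\Q=0$ for all $g$ in the restricted class (rather than only a dense subclass), and likewise that the field $q\,mm^\top\nabla\log p$ in the Stokes step has no contribution at infinity. Both follow from the integrability hypotheses in the statement combined with a standard exhaustion/cut-off argument on $\mathbb{R}^d$, but the density step used to identify the supremum requires verifying that the $L^2(\Q)$-continuous functional $g\mapsto\int\langle s_p-s_q,g\rangle\,\dd\Q$ extends from the $C^1\cap L^2$ unit ball without loss — that is routine once $s_p-s_q\in L^2(\Q)$.
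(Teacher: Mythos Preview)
Your proposal is correct and follows essentially the same route as the paper: subtract the Stein identity for $\mathbb{Q}$ to reduce $\int \mathcal{S}_p^m[g]\,\dd\mathbb{Q}$ to the $L^2(\mathbb{Q})$ inner product $\langle s_p-s_q,g\rangle_{L^2(\mathbb{Q})}$, invoke density of $C^1\cap L^2$ in the $L^2(\mathbb{Q})$ unit ball to identify the supremum with $\|s_p-s_q\|_{L^2(\mathbb{Q})}$, deduce the divergence property from invertibility of $m$ and connectedness of $\mathbb{R}^d$, and obtain \cref{Stoke-Score} by expanding the square and applying the divergence theorem to the cross term under the stated $L^1$ hypotheses. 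The paper's proof is slightly terser (it simply asserts $\int\mathcal{S}_q[g]\,\dd\mathbb{Q}=0$ and cites a global divergence theorem for the Stokes step), but the structure and all key ideas match yours.
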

Notably, $\dsm_m$ recovers $\sm$ when  $m(x)m(x)^{\top} = I$ and the (generalised) non-negative score matching estimator of \cite{Lyu2009} with the choice $m(x) \equiv \text{diag}(h_1(x_1)^{1/2},\ldots,h_d(x_d)^{1/2})$. Like standard SM, DSM is only defined for distributions with sufficiently smooth densities.
Since the $\theta$-dependent part of $\dsm_m(\Q \| \P_{\theta})$  does not depend on the density of $\Q$, and can be estimated using an empirical mean, leading to the estimators
$\hat{\theta}^{\dsm}_n  \defn  \text{argmin}
_{\theta \in \Theta} \hvf \dsm_m ( \{X_i\}_{i=1}^n \| \P_\theta )$ for
\begin{talign}
\hvf \dsm_m ( \{X_i\}_{i=1}^n \| \P_\theta ) \defn
\frac{1}{n}\sum_{i=1}^n \left( \|  m^{\top}\nabla_x\log p_{\theta} \|_{2}^2+2\nabla \cdot\left( m m^{\top} \nabla \log p_{\theta}\right)\right)(X_i) 
\end{talign}
where $\{X_i\}_{i=1}^n$ is a sample from $\Q$. Note that this is only possible if $m$ is independent of $\theta$, in contrast to $\dksd$ where $m$ can depend on $\X \times \Theta$, thus leading to a more flexible class of estimators.

 An interesting remark is that the $\dsm_m$ discrepancy may in fact be obtained as a limit of $\dksd$ over a sequence of target-dependent kernels:
see \cref{derivations-DSM} for the complete result which corrects and significantly generalises previously established connections between the SM divergence and KSD (such as in Sec. 5 of \cite{Liu2016testing}).

We conclude by commenting on the computational complexity. Evaluating the DKSD loss function requires $\mathcal{O}(n^2d^2)$ computation, due to the U-statistic and a matrix-matrix product. However, if $K=\diag(\lambda_1 k^1,\ldots, \lambda_d k^d)$ or $K=Bk$, and if $m$ is a diagonal matrix, then we can by-pass expensive matrix products and the cost is $\mathcal{O}(n^2d)$, making it comparable to that of KSD. Although we do not consider these in this paper, recent approximations to KSD could also be adapted to DKSD to reduce the computational cost to $\mathcal{O}(nd)$ \citep{huggins2018random,Jitkrittum2017}. The DSM loss function has computational cost $\mathcal{O}(n d^2)$, which is comparable to the SM loss. From a computational viewpoint, DSM will hence be preferable to DKSD for large $n$, whilst DKSD will be preferable to DSM for large $d$.

\subsection{Further Examples: Contrastive Divergence and Minimum Probability Flow}

Before analysing DKSD and DSM estimators further, we show that the class of minimum SD estimators also includes other well-known estimators for unnormalised models. 
Let $X_{\theta}^n$, $n \in \mathbb{N}$ be a Markov process with unique invariant probality measure $\P_{\theta}$, for example a Metropolis-Hastings chain. Let $P_{\theta}^n$ be the associated transition semigroup, i.e. $(P_{\theta}^n f)(x) = \mathbb{E}[f(X_{\theta}^n ) | X_{\theta}^0 = x]$.  Choosing the Stein operator $\steinop_{p} = I - P_{\theta}^n$ and Stein class $\mathcal{G} = \lbrace \log p_{\theta} + c \, : \, c \in \R\rbrace$, leads to the following SD: 
\begin{talign}
\text{CD}(\Q \| \P_{\theta}) = \int_{\mathcal{X}} \left(\log p_{\theta} - P^n_{\theta}\log p_{\theta}\right)\dd \Q =\mbox{KL}(\Q \| \P_{\theta}) - \mbox{KL}(\Q_{\theta}^n \| \P_{\theta}),
\end{talign}
where $\Q_{\theta}^{n}$ is the law of $  X_{\theta}^n | X_{\theta}^0 \sim \mathbb{Q}$ and assuming that $\Q \ll \P_{\theta}$ and $\Q_{\theta}^n \ll \P_{\theta}$, which is the loss function associated with contrastive divergence (CD) \cite{Hinton2002,liu2017learning}. 
Suppose now that $\mathcal{X}$ is a finite set.  Given $\theta\in \Theta$ let $P_{\theta}$ be the transition matrix for a Markov process with unique invariant distribution $\P_{\theta}$.   Suppose we observe data $\{x_i\}_{i=1}^n$ and let $q$ be the corresponding empirical distribution.   Choosing the Stein operator $\steinop_{p} = I - P_{\theta}$ and the Stein set $\mathcal{G} = \lbrace f \in \Gamma(\R) :   \|f\|_{\infty} \leq 1 \rbrace$.  
Note that, $g \in \arg\sup_{g\in G}|\Q(\steinop_{p}[g])|$ will satisfy $g(i) =\mbox{sgn}(q^{\top}(I - P_{\theta})_{i})$, and the resulting Stein discrepancy is 
the minimum probability flow loss objective function \cite{sohl2009minimum2009}:
\begin{talign}
\mbox{MPFL}(\Q \| \P) = \sum_{y} \left| ((I - P_{\theta})^{\top} q)_{y} \right|. 
\end{talign}

\subsection{Implementing Minimum SD Estimators: Stochastic Riemannian Gradient Descent}

In order to implement the minimum $\sd$  estimators, we propose to use a stochastic gradient descent (SGD) algorithm associated to the information geometry induced by the SD on the parameter space. 
More precisely, consider a parametric family $\mathcal P_{\Theta}$ of probability measures on $\X$ with $\Theta \subset \R^m$. 
Given a discrepancy $D:\mathcal P_{\Theta} \times \mathcal P_{\Theta} \to \R$ satisfying $D(\P_{\alpha} \| \P_{\theta})=0$ iff $\P_{\alpha} = \P_{\theta}$ (called a statistical divergence), its associated information matrix field on $\Theta$ is defined  as the map $\theta \mapsto g(\theta)$, where $g(\theta)$ is the symmetric bilinear form $g(\theta)_{ij} = - \half (\partial^2/\partial \alpha^i \partial \theta^j) D( \P_{\alpha}\|\P_{\theta}) |_{\alpha=\theta}$ \citep{Amari2016}. 
When $g$ is positive definite, we can use it to perform (Riemannian) gradient descent on the parameter space $ \Theta$. We provide below the information matrices of DKSD and DSM (and hence extends results of \cite{Karakida2016}):
\begin{Proposition}[\textbf{Information Tensor DKSD}]\label{information-dksd}  Assume the conditions of \cref{DKSD-divergence} hold.  
The information tensor associated to DKSD is positive semi-definite and has components
\begin{talign}
g_{\dksd}(\theta)_{ij}  =\int_{\X} \int_{\X}  \left( \nabla_x \partial_{\theta^j} \log p_{\theta}(x)  \right)^{\top}  m_{\theta}(x) K(x,y) m^{\top}_{\theta}(y) \nabla_y  \partial_{\theta^i}\log p_{\theta}(y) \dd \P_{\theta}(x) \dd \P_{\theta}(y).
\end{talign}
\end{Proposition}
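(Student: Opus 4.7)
The plan is to exploit the mean-embedding representation of $\dksd_{K,m}^2$. Since $\stein_\theta(x,y)=\langle\phi_\theta(x),\phi_\theta(y)\rangle_{\H^d}$ for the feature map $\phi_\theta(x) \defn \steinop^{m,1}_{p_\theta}[K](x,\cdot)\in \H^d$, we may write
\begin{talign*}
F(\alpha,\theta) \defn \dksd_{K,m}(\P_\alpha\|\P_\theta)^2 = \|\psi_\theta(\alpha)\|_{\H^d}^2,\qquad \psi_\theta(\alpha)\defn \int_\X \phi_\theta(x)\,\dd\P_\alpha(x),
\end{talign*}
and the Stein property yields the crucial identity $\psi_\theta(\theta)=0$ for every $\theta\in\Theta$.

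First, I would differentiate $F=\langle\psi_\theta(\alpha),\psi_\theta(\alpha)\rangle_{\H^d}$ twice using the product rule. At $\alpha=\theta$, every term containing a factor of $\psi_\theta(\theta)$ vanishes, leaving
\begin{talign*}
\partial_{\alpha^i}\partial_{\theta^j} F\big|_{\alpha=\theta} = 2\,\langle \partial_{\alpha^i}\psi_\theta(\alpha)|_{\alpha=\theta},\ \partial_{\theta^j}\psi_\theta(\alpha)|_{\alpha=\theta}\rangle_{\H^d}.
\end{talign*}
Differentiating the identity $\psi_\theta(\theta)\equiv 0$ in $\theta^j$ via the chain rule yields $\partial_{\theta^j}\psi_\theta(\alpha)|_{\alpha=\theta}=-\partial_{\alpha^j}\psi_\theta(\alpha)|_{\alpha=\theta}$, so writing $u_k\defn \partial_{\alpha^k}\psi_\theta(\alpha)|_{\alpha=\theta}\in\H^d$ gives $g_{\dksd}(\theta)_{ij}=\langle u_i,u_j\rangle_{\H^d}$. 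This expresses $g_{\dksd}(\theta)$ as a Gram matrix in $\H^d$, so $v^\top g_{\dksd}(\theta)v=\|\textsum_k v^k u_k\|_{\H^d}^2\geq 0$ for every $v\in\R^m$, establishing positive semi-definiteness.

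It remains to identify $u_i$. The score-function identity $\partial_{\alpha^i}\dd\P_\alpha=(\partial_{\alpha^i}\log p_\alpha)\dd\P_\alpha$ gives $u_i=\int \phi_\theta(x)\,\partial_{\theta^i}\log p_\theta(x)\dd\P_\theta(x)$. Expanding $\phi_\theta(x)=p_\theta(x)^{-1}\nabla_x\cdot(p_\theta(x)m_\theta(x)K(x,\cdot))$ and integrating by parts columnwise in $x$---the boundary contribution vanishes since $K$ lies in the Stein class of $\P_\theta$---transfers $\nabla_x$ onto $\partial_{\theta^i}\log p_\theta$ to produce
\begin{talign*}
u_i(y) = -\int_\X K(y,x)\,m_\theta(x)^\top\nabla_x\partial_{\theta^i}\log p_\theta(x)\,\dd\P_\theta(x).
\end{talign*}
The matrix-valued reproducing identity $\langle K(\cdot,x)v,K(\cdot,x')v'\rangle_{\H^d}=v^\top K(x,x')v'$ applied to the resulting double integral for $\langle u_i, u_j\rangle_{\H^d}$ yields exactly the formula claimed, up to a relabelling $i\leftrightarrow j$ that is harmless since $g_{\dksd}$ is symmetric.

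The main obstacle is regularity management: one must justify differentiation under the integral in both $\alpha$ and $\theta$, the boundary-free integration by parts, and the transport of RKHS-valued integrals through the inner product. The requisite integrability of $K$, $m_\theta$, $\partial_\theta\log p_\theta$, and $\nabla_x\partial_\theta\log p_\theta$ in the relevant combinations, together with Stein-class membership of $K$, are inherited from the hypotheses of \cref{DKSD-divergence} and can be made explicit in a short technical lemma in the appendix.
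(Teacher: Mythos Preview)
Your argument is correct and takes a genuinely different route from the paper's.

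The paper does \emph{not} work with the feature map. Instead it starts from the identity established in the proof of \cref{DKSD-divergence},
\[
\dksd_{K,m}(\P_\alpha\|\P_\theta)^2=\int_\X\int_\X p_\alpha(x)\,\delta_{p_\theta,p_\alpha}(x)^\top K(x,y)\,\delta_{p_\theta,p_\alpha}(y)\,p_\alpha(y)\,\dd x\,\dd y,
\]
with $\delta_{p_\theta,p_\alpha}=m_\theta^\top(\nabla\log p_\theta-\nabla\log p_\alpha)$, and differentiates this double integral directly in $\alpha^i$ and $\theta^j$. Every term retaining a factor $\delta_{p_\theta,p_\alpha}$ vanishes at $\alpha=\theta$, and only the two cross terms survive, giving the stated formula immediately. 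For positive semi-definiteness the paper then writes $v^\top g(\theta)v=\int\int V_\theta(x)^\top K(x,y)V_\theta(y)\,\dd\P_\theta\dd\P_\theta$ with $V_\theta=m_\theta^\top\nabla_x\langle v,\nabla_\theta\log p_\theta\rangle$ and invokes positive-definiteness of $K$.

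The two approaches rest on the same underlying fact---the ``discrepancy vector'' vanishes on the diagonal---but package it differently. The paper's route is shorter because the $\delta$-representation already has the Stein identity/integration-by-parts baked in from \cref{DKSD-divergence}, so no further IBP is needed and no Hilbert-space differentiation has to be justified. Your route is more structural: positive semi-definiteness drops out for free from the Gram form $g_{ij}=\langle u_i,u_j\rangle_{\H^d}$ without a separate computation, and the chain-rule trick on $\psi_\theta(\theta)\equiv 0$ is a clean way to convert $\theta$-derivatives into $\alpha$-derivatives. The price is the extra integration-by-parts step and the need to handle Bochner integrals and derivatives in $\H^d$; note also that the vanishing boundary term you need is not literally the Stein-class condition on $K$ but a closely related decay statement on $p_\theta m_\theta K\,\partial_{\theta^i}\log p_\theta$, so be precise when you write up the technical lemma.
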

\begin{Proposition}[\textbf{Information Tensor DSM}]\label{app:information-dsm} Assume the conditions of \cref{prop:SM_is_Stein} hold. The information tensor defined by DSM is positive semi-definite and has components 
 \begin{talign}
 g_{\dsm}(\theta)_{ij} = \int_{\X} \metric{ m^{\top} \nabla \partial_{\theta^i} \log p_{\theta} }{m^{\top} \nabla \partial_{\theta^j} \log p_{\theta}} \dd \P_{\theta}.
 \end{talign}
\end{Proposition}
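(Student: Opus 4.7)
The plan is to substitute the definition of the information tensor into the first expression for $\dsm_m$ given in \cref{prop:SM_is_Stein}, differentiate twice, and evaluate on the diagonal. Concretely, for $\P_\alpha,\P_\theta \in \mathcal P_\Theta$ write
\begin{talign*}
\dsm_m(\P_\alpha\|\P_\theta) = \int_\X \bigl\langle m^\top(\nabla \log p_\alpha - \nabla \log p_\theta),\, m^\top(\nabla \log p_\alpha - \nabla \log p_\theta)\bigr\rangle \, p_\alpha\, \dd x,
\end{talign*}
using that $m$ does not depend on $\theta$ (as emphasised after \cref{prop:SM_is_Stein}). I will differentiate first in $\theta^j$, passing the derivative inside the integral (justified under the standing smoothness and integrability assumptions of \cref{prop:SM_is_Stein}), which produces a single term proportional to $\langle m^\top(\nabla\log p_\alpha-\nabla\log p_\theta), m^\top\nabla \partial_{\theta^j}\log p_\theta\rangle$ since $p_\alpha$ does not depend on $\theta$.

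Next I would differentiate in $\alpha^i$. This produces two contributions by the product rule: one in which $\partial_{\alpha^i}$ hits the $m^\top\nabla \log p_\alpha$ factor inside the inner product, and one in which it hits the density $p_\alpha$. Upon setting $\alpha=\theta$, the second contribution vanishes because the factor $m^\top(\nabla\log p_\alpha-\nabla \log p_\theta)$ appearing in it is identically zero on the diagonal. The first contribution, evaluated at $\alpha=\theta$, reads
\begin{talign*}
\partial_{\alpha^i}\partial_{\theta^j}\dsm_m(\P_\alpha\|\P_\theta)\big|_{\alpha=\theta}
= -2\int_\X \bigl\langle m^\top \nabla \partial_{\theta^i}\log p_\theta,\, m^\top \nabla \partial_{\theta^j}\log p_\theta \bigr\rangle\, \dd\P_\theta,
\end{talign*}
so multiplying by $-1/2$ yields the claimed formula for $g_{\dsm}(\theta)_{ij}$.

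Positive semi-definiteness is then immediate: for any $v\in\R^m$,
\begin{talign*}
\sum_{i,j} v_i v_j\, g_{\dsm}(\theta)_{ij} = \int_\X \Bigl\| m^\top \nabla \Bigl(\sum_i v_i\, \partial_{\theta^i}\log p_\theta\Bigr) \Bigr\|_2^2 \dd\P_\theta \ \ge\ 0.
\end{talign*}

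The only non-routine step is justifying the exchange of differentiation and integration in both variables, and the vanishing of the $\partial_{\alpha^i} p_\alpha$ term on the diagonal. For the former, I would invoke the dominated convergence-type hypotheses that are already implicit in \cref{prop:SM_is_Stein} ($s_{p_\theta}-s_{p_\alpha}\in L^2(\P_\alpha)$ with sufficient smoothness of $\theta\mapsto p_\theta$); for the latter, the argument is purely algebraic once one observes that $m^\top(\nabla \log p_\alpha-\nabla\log p_\theta)$ occurs as an undifferentiated factor in that term. This mirrors the strategy used for \cref{information-dksd}, with the role of the kernel $K$ replaced by the $L^2(\P_\theta)$ inner product weighted by $mm^\top$.
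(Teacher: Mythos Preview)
Your proposal is correct and follows essentially the same approach as the paper's proof: differentiate the squared-norm expression for $\dsm_m(\P_\alpha\|\P_\theta)$ first in $\theta^j$, then in $\alpha^i$, observe that the term where $\partial_{\alpha^i}$ hits $p_\alpha$ vanishes on the diagonal because it carries an undifferentiated factor $m^\top(\nabla\log p_\alpha-\nabla\log p_\theta)$, and conclude positive semi-definiteness by contracting with $v$ and recognising a squared $L^2(\P_\theta)$ norm. The paper's proof is identical in structure and detail.
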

See \cref{appendix:information-semi-metrics} for the proofs. Given an (information) Riemannian metric, recall the gradient flow of a curve $\theta$ on the Riemannian manifold $\Theta$  is the solution to $\dot \theta (t) = - \nabla_{\theta(t) } \sd(\Q\| \P_{\theta}) $, where $\nabla_{\theta}$ denotes the Riemannian gradient at $\theta$. It is the curve that follows the direction of steepest decrease (measured with respect to the Riemannian metric) of the function $\sd(\Q\| \P_{\theta})$ (see \cref{app:riemannian-grad-descent}). The well-studied natural gradient descent \citep{Amari1998,Amari2016} corresponds to the case in which the Riemannian manifold is $\Theta = \R^m$ equipped with the Fisher metric and $\sd$ is replaced by  $ \kl$.
 When $\Theta$ is a linear manifold with coordinates $(\theta^i)$ we have 
 $\nabla_\theta \sd(\Q\| \P_{\theta}) = g(\theta)^{-1}\dd_{\theta}  \sd(\Q\| \P_{\theta}) $, where $\dd_\theta f$ denotes the tuple $(\partial_{\theta^i}f)$. We will approximate this at step $t$ of the descent using the biased estimator 
 $\hat g_{ \theta_t}(\{X_i^t\}_i)^{-1} \dd_{\theta_t} \hvf \sd(\{X_i^t\}_{i=1}^n \| \P_\theta)$, 
 where $\hat g_{ \theta_t}(\{X_i^t\}_{i=1}^n)$ is an unbiased estimator for the information matrix $g(\theta_t)$ and $\{X^t_i \sim \Q\}_i$ is a sample at step $t$. In general, we have no guarantee that $\hat{g}_{\theta_t}$ is invertible, and so we may need a further approximation step to obtain an invertible matrix. Given a sequence $(\gamma_t)$ of step sizes we  will approximate the gradient flow with
 \begin{talign}
 \hat \theta_{t+1}= \hat \theta_t- \gamma_t \hat g_{ \theta_t}(\{X_i^t\}_{i=1}^n)^{-1} \dd_{\theta_t} \hvf \sd(\{X_i^t\}_{i=1}^n \| \P_\theta).
 \end{talign}
 
Minimum SD estimators hold additional appeal for exponential family models, since their densities have the form
$
p_{\theta}(x)  \propto \exp \left(\left\langle \theta, T(x)\right\rangle_{\R^m} \right)\exp(b(x))
$
for natural parameters $\theta \in \mathbb{R}^m$, sufficient statistics $T \in \Gamma( \R^m)$, and base measure $\exp(b(x))$.
For these models, the U-statistic approximations of DKSD and DSM are convex quadratics with closed form solutions whenever $K$ and $m$ are independent of $\theta$.
Moreover, since the absolute value of an affine function is convex, and the supremum of convex functions is convex, any $\sd$ with a diffusion Stein operator is convex in $\theta$, provided $m$ and the Stein class $\mathcal G$ are independent of $\theta$.

\section{Theoretical Properties for Minimum Stein Discrepancy Estimators}\label{large-sample-theory}

We now show that the DKSD and DSM estimators have many desirable properties such as consistency, asymptotic normality and bias-robustness. These results do not only provide us with reassuring theoretical guarantees on the performance of our algorithms, but can also be a practical tool for choosing a Stein operator and Stein class given an inference problem of interest.

We begin by establishing strong consistency and for DKSD; i.e. almost sure convergence: 
$ \hat{\theta}_n^{\dksd} \xrightarrow{a.s.} \theta_*^{\dksd} \defn  \text{argmin}_{\theta \in \Theta} \dksd_{K,m}(\mathbb{Q} \|\P_{\theta})^2$. This will be followed by a proof of asymptotic normality.
We will assume we are in the specified setting, so that $\Q =\P_{\theta_*^{\dksd}}\in \mathcal P_{\Theta}$. 
In the misspecified setting, we will need to also assume the existence of a unique minimiser.
\begin{theorem}[\textbf{Strong Consistency of DKSD}]  \label{DKSD-consistency}
Let $\X=\R^d$,  $\Theta \subset \R^m$.
Suppose that $K$ is  bounded with bounded derivatives up to order $2$, 
that $\stein(x,y)$ is continuously-differentiable on an $\R^m$-open neighbourhood of $\Theta$,
 and that for any compact subset $C \subset \Theta$ there exist functions $f_1, f_2, g_1, g_2$ such that for $\mathbb{Q}$-a.e. $x \in \mathcal{X}$,
\begin{enumerate}
  \item $\left\| m^{\top}(x) \nabla\log p_{\theta}(x) \right\| \leq f_1(x)$, where $f_1 \in L^1(\Q)$ and continuous,
  \item $\left\| \nabla_{\theta} \left(m(x)^{\top} \nabla\log p_{\theta}(x)\right) \right\| \leq g_1(x)$, where $g_1 \in L^1(\Q)$ is continuous,
  \item $\left\| m(x)\right\| + \left\| \nabla_{x}m(x) \right\| \leq f_2(x)$ where $f_2\in L^1(\Q)$ and continuous,
  \item $\left\| \nabla_{\theta} m(x) \right\| + \left\| \nabla_{\theta} \nabla_{x}m(x) \right\| \leq g_2(x)$ where $g_2\in L^1(\Q)$ is continuous.
\end{enumerate}
Assume further that $\theta \mapsto \P_{\theta}$ is injective. 
Then we have a unique minimiser $\theta_*^{\dksd}$, and if either $\Theta$ is compact, or $\theta_*^{\dksd} \in \text{int}(\Theta)$ and $\Theta$ and $\theta \mapsto \hvf\dksd_{K,m}( \{X_i\}_{i=1}^n \|\P_{\theta})^2$ are convex, then $\hat \theta_n^{\dksd}$ is strongly consistent.
\end{theorem}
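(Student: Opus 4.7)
The plan is to follow the classical M-estimator consistency paradigm in three stages: (a) establish that $\theta_*^{\dksd}$ is the unique minimiser of $\theta \mapsto \dksd_{K,m}(\Q\|\P_\theta)^2$; (b) prove a uniform strong law of large numbers, \[ \sup_{\theta \in C} \big| \hvf\dksd_{K,m}(\{X_i\}_{i=1}^n\|\P_\theta)^2 - \dksd_{K,m}(\Q\|\P_\theta)^2 \big| \xrightarrow{\text{a.s.}} 0 \] for every compact $C \subset \Theta$; and (c) conclude by argmin continuity. For (a), since we sit in the specified setting $\Q = \P_{\theta_*^{\dksd}}$, we have $\dksd_{K,m}(\Q\|\P_{\theta_*^{\dksd}})^2 = 0$, and the divergence property from \cref{DKSD-divergence} together with the assumed injectivity of $\theta \mapsto \P_\theta$ forces any other zero of the DKSD to coincide with $\theta_*^{\dksd}$.

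For (b) I would start from the expansion
\[
k^0_\theta(x,y) = \frac{1}{p_\theta(x)p_\theta(y)} \nabla_y\cdot\nabla_x \cdot\bigl( p_\theta(x)\, m_\theta(x)\, K(x,y)\, m_\theta(y)^\top p_\theta(y)\bigr),
\]
and apply the product rule twice. Every term that results is a product of entries of $K$, $\nabla K$, or $\nabla_x\nabla_y K$ (all bounded by hypothesis) with entries of $m_\theta$, $\nabla_x m_\theta$, and $s_{p_\theta} = m_\theta^\top \nabla\log p_\theta$. Hypotheses (1) and (3) then deliver a pointwise bound of the form $|k^0_\theta(x,y)| \leq C\,(f_1(x)+f_2(x))(f_1(y)+f_2(y))$, valid uniformly over a compact $C\subset\Theta$; since $f_1, f_2 \in L^1(\Q)$, this envelope lies in $L^1(\Q\otimes\Q)$ by Fubini. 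Differentiating in $\theta$ and using (2), (4) produces an analogous envelope for $\nabla_\theta k^0_\theta$. The assumed continuity of $(x,y,\theta) \mapsto k^0_\theta(x,y)$, combined with these integrable envelopes, gives continuity of $\theta \mapsto \dksd_{K,m}(\Q\|\P_\theta)^2$ via dominated convergence, and Hoeffding's SLLN for U-statistics applied pointwise, followed by a standard compactness argument (e.g.\ the uniform law of Jennrich, obtained by an $\epsilon$-net over $C$ and using continuity as a local modulus), upgrades pointwise to uniform convergence over $C$.

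For (c), in the compact case the argmin continuity is immediate: any almost-sure subsequential limit $\tilde\theta$ of $(\hat\theta_n^{\dksd})$ satisfies $\dksd_{K,m}(\Q\|\P_{\tilde\theta})^2 \le \dksd_{K,m}(\Q\|\P_{\theta_*^{\dksd}})^2 = 0$ by the uniform convergence on $\Theta$, whence $\tilde\theta = \theta_*^{\dksd}$ by uniqueness from (a). In the non-compact case with $\theta_*^{\dksd} \in \mathrm{int}(\Theta)$, convexity of $\theta \mapsto \hvf\dksd_{K,m}^2$ and of $\Theta$ lets us invoke the classical fact that pointwise convergence of finite convex functions on an open convex set is automatically uniform on compact subsets. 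Combined with the interior minimiser, for any open ball $B \subset \Theta$ around $\theta_*^{\dksd}$ the value of the empirical objective on $\partial B$ eventually strictly exceeds that at $\theta_*^{\dksd}$, trapping $\hat\theta_n^{\dksd}$ inside $B$ and returning us to the compact argument.

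The main obstacle I anticipate is the detailed bookkeeping in (b) when expanding the divergence-of-divergence in $k^0_\theta$: one must verify that every resulting summand admits a factorisation into one of the four ingredients controlled by hypotheses (1)--(4), and that the derivatives in $\theta$ only produce terms dominated by $(g_1+g_2)$-type envelopes. The rest of the argument is standard empirical-process reasoning for U-statistics, and the convex non-compact step is a textbook consequence of Rockafellar-type results on convex functions.
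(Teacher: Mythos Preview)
Your proposal is correct and follows essentially the same three-stage strategy as the paper: uniqueness via \cref{DKSD-divergence} plus injectivity, envelope bounds on $k^0_\theta$ and $\nabla_\theta k^0_\theta$ from hypotheses (1)--(4) and boundedness of $K$ to obtain a uniform SLLN for the U-statistic, and then argmin continuity. The only minor differences are technical: the paper makes the equicontinuity step explicit via the MVT on an exhausting sequence of balls and cites a uniform SLLN for U-statistics due to Yeo (2001) rather than a Jennrich-style $\epsilon$-net argument, and for the non-compact convex case it follows the Newey--McFadden device (restrict to a compact ball, show the restricted minimiser is eventually global by convexity) rather than your Rockafellar-style trapping argument---both routes are standard and interchangeable here.
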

\begin{theorem}[\textbf{Central Limit Theorem for DKSD}]\label{DKSD-normality}
Let $\X$ and $\Theta$ be open subsets of $\R^d$ and $\R^m$ respectively.
 Let $K$ be a bounded kernel with bounded derivatives up to order $2$ and suppose that $\hat \theta_n^{\dksd} \xrightarrow{p} \theta_*^{\dksd} $ and that there exists a compact neighbourhood $\mathcal{N} \subset \Theta$ of $\theta_*^{\dksd}$ such that $\theta \rightarrow \hvf \dksd_{K,m}( \{X_i\}_{i=1}^n,\P_{\theta})^2$ is twice continuously differentiable for $\theta \in \mathcal{N}$ and, for $\mathbb{Q}$-a.e. $x \in \mathcal{X}$,
\begin{enumerate}
   \item $\| m^{\top}(x) \nabla\log p_{\theta}(x) \| + \| \nabla_{\theta} \left(m(x)^{\top} \nabla\log p_{\theta}(x)\right) \| \leq f_1(x),$
  \item $\| m(x) \| + \| \nabla_{x}m(x) \| + \| \nabla_{\theta} m(x) \| + \| \nabla_{\theta} \nabla_{x}m(x) \| \leq f_2(x),$
   \item $\| \nabla_{\theta}\nabla_{\theta} \left(m(x)^{\top} \nabla\log p_{\theta}(x)\right) \| + \| \nabla_{\theta}\nabla_{\theta}\nabla_{\theta} \left(m(x)^{\top} \nabla\log p_{\theta}(x)\right) \| \leq g_1(x),$
  \item $\|\nabla_{\theta}\nabla_{\theta} m(x) \| + \|\nabla_{\theta}\nabla_{\theta} \nabla_{x}m(x) \| + \|\nabla_{\theta}\nabla_{\theta} \nabla_{\theta} m(x) \| + \|\nabla_{\theta}\nabla_{\theta} \nabla_{\theta} \nabla_{x}m(x) \| \leq g_2(x),$
\end{enumerate}
where $f_1,f_2 \in L^2(\Q)$,$g_1,g_2 \in L^1(\Q)$ are continuous.
Suppose also that the information tensor $g$ is invertible at $\theta_*^{\dksd}$.  Then 
\begin{talign}
 \sqrt{n} \left( \hat \theta_n^{\dksd}- \theta_*^{\dksd} \right) \xrightarrow[]{d} \mathcal N \left(0,g_{\dksd}^{-1}(\theta_*^{\dksd})\Sigma_{\dksd} g^{-1}_{\dksd}(\theta_*^{\dksd}) \right),
 \end{talign}
 where $\Sigma_{\dksd} = \int_\X 
 \left(\int_\X  \nabla_{\theta} \stein_{\theta_*^{\dksd}}(x,y)\dd \Q(y) \right)
\otimes  \left( \int_\X  \nabla_{\theta} \stein_{\theta_*^{\dksd}}(x,z)\dd \Q(z) \right)\dd \Q(x)$.
\end{theorem}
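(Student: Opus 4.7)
The plan is to treat this as a standard M-estimator asymptotic normality argument adapted to the U-statistic structure of $\hvf\dksd^2_{K,m}$. Since $\hat\theta_n^{\dksd} \xrightarrow{p} \theta_*^{\dksd}$ and $\hvf\dksd^2_{K,m}$ is twice continuously differentiable on the compact neighbourhood $\mathcal N$, with probability tending to one $\hat\theta_n^{\dksd}$ lies in $\operatorname{int}(\mathcal N)$ and therefore satisfies the first-order condition $\dd_\theta \hvf\dksd_{K,m}^2(\{X_i\}\|\P_{\hat\theta_n^{\dksd}})=0$. A coordinate-wise Taylor expansion of each component of $\dd_\theta \hvf\dksd_{K,m}^2$ around $\theta_*^{\dksd}$ then gives
\begin{talign*}
0 = \dd_{\theta_*^{\dksd}} \hvf\dksd_{K,m}^2 + H_n\,(\hat\theta_n^{\dksd}-\theta_*^{\dksd}),
\end{talign*}
for a random symmetric matrix $H_n$ whose rows are Hessians of $\hvf\dksd_{K,m}^2$ evaluated at mean-value points $\tilde\theta_n^{(i)}$ on the segment joining $\hat\theta_n^{\dksd}$ and $\theta_*^{\dksd}$. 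Rearranging yields $\sqrt n(\hat\theta_n^{\dksd}-\theta_*^{\dksd}) = -H_n^{-1}\sqrt n\,\dd_{\theta_*^{\dksd}}\hvf\dksd_{K,m}^2$, so the remaining work is (i) to identify the probabilistic limit of $H_n$, and (ii) to derive a CLT for the score $\sqrt n\,\dd_{\theta_*^{\dksd}}\hvf\dksd_{K,m}^2$.

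For (i), $H_n$ is a two-sample $U$-statistic with symmetric kernel $\nabla_\theta^2 \stein_\theta(x,y)$ evaluated at the random points $\tilde\theta_n^{(i)}\xrightarrow{p}\theta_*^{\dksd}$. Using the dominating bounds (2) and (4), together with the bounded kernel $K$ and its derivatives, each component of $\nabla_\theta^2 \stein_\theta(x,y)$ has an integrable envelope uniform over $\mathcal N$, so a uniform LLN for $U$-statistics (e.g.\ via a bracketing or Glivenko--Cantelli-style argument) combined with continuity in $\theta$ yields $H_n\xrightarrow{p}\nabla_\theta^2 \dksd_{K,m}^2(\Q\|\P_{\theta})|_{\theta_*^{\dksd}}$. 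I will then identify this Hessian with $2g_{\dksd}(\theta_*^{\dksd})$: differentiating $D(\alpha,\theta)\defeq\dksd_{K,m}^2(\P_\alpha\|\P_\theta)$ along the diagonal where it vanishes gives $\partial_\alpha\partial_\theta D + \partial_\theta^2 D=0$ at $\alpha=\theta$, so $\partial_\theta^2 D|_{\theta_*^{\dksd}} = -\partial_\alpha\partial_\theta D|_{\theta_*^{\dksd}} = 2 g_{\dksd}(\theta_*^{\dksd})$, which is invertible by hypothesis.

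For (ii), the score is itself a $U$-statistic: $\dd_{\theta_*^{\dksd}}\hvf\dksd_{K,m}^2 = \frac{1}{n(n-1)}\sum_{i\neq j} \nabla_\theta \stein_{\theta_*^{\dksd}}(X_i,X_j)$ with symmetric kernel $h(x,y)\defeq\nabla_\theta\stein_{\theta_*^{\dksd}}(x,y)$. Since $\theta_*^{\dksd}$ is a minimiser interior to $\Theta$ and differentiation under the integral is justified by the dominating functions from hypotheses (1)--(2), $\E[h(X_1,X_2)] = \nabla_\theta\dksd_{K,m}^2(\Q\|\P_\theta)|_{\theta_*^{\dksd}} = 0$. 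The $L^2$ assumption on $f_1$ together with boundedness of $K$ and its derivatives ensures $\E\|h(X_1,X_2)\|^2<\infty$, so the classical Hoeffding $U$-statistic CLT applies and gives
\begin{talign*}
\sqrt n\,\dd_{\theta_*^{\dksd}}\hvf\dksd_{K,m}^2 \xrightarrow{d} \mathcal N\bigl(0,\,4\,\Sigma_{\dksd}\bigr),
\end{talign*}
where $\Sigma_{\dksd}=\operatorname{Var}\bigl(\E[h(X_1,X_2)\mid X_1]\bigr)$ coincides with the stated expression because the mean is zero. Slutsky's theorem then combines (i) and (ii) to yield
\begin{talign*}
\sqrt n(\hat\theta_n^{\dksd}-\theta_*^{\dksd}) \xrightarrow{d} -\bigl(2g_{\dksd}(\theta_*^{\dksd})\bigr)^{-1}\mathcal N(0,4\Sigma_{\dksd}) = \mathcal N\bigl(0,\,g_{\dksd}^{-1}(\theta_*^{\dksd})\Sigma_{\dksd} g_{\dksd}^{-1}(\theta_*^{\dksd})\bigr).
\end{talign*}

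The main obstacle I anticipate is step (i): making the convergence $H_n\xrightarrow{p}2g_{\dksd}(\theta_*^{\dksd})$ rigorous when $H_n$ is evaluated at random $\tilde\theta_n^{(i)}$. This requires a uniform law of large numbers for the second-derivative $U$-statistic over a neighbourhood of $\theta_*^{\dksd}$, which in turn forces the careful bookkeeping that explains why hypotheses (3)--(4) carry bounds on \emph{three} and \emph{four} $\theta$-derivatives of $m$ and $m^\top\nabla\log p_\theta$: these higher derivatives are precisely what control the Lipschitz-in-$\theta$ moduli of the entries of $\nabla_\theta^2 \stein_\theta(x,y)$ and supply the integrable envelopes needed to pass from pointwise to uniform convergence. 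The rest of the argument is largely routine given the dominating functions supplied in the hypotheses.
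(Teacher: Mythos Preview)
Your proposal is correct and follows essentially the same route as the paper: Hoeffding's $U$-statistic CLT for the score, a uniform LLN for the Hessian $U$-statistic (the paper invokes \cite{yeo2001uniform}, establishing equicontinuity via the MVT using exactly the third-derivative envelopes you identify), identification of the limiting Hessian with $2g_{\dksd}(\theta_*^{\dksd})$, and then the standard $M$-estimator sandwich conclusion (the paper packages the last step via \cite[Theorem~3.1]{newey1994large} rather than appealing to Slutsky directly). The only minor difference is that you obtain the Hessian--information-tensor identity by differentiating the first-order optimality relation $\partial_\theta D(\alpha,\theta)|_{\theta=\alpha}=0$ along the diagonal, whereas the paper verifies it by a direct computation using the $\delta_{p_\theta,p_\alpha}$ representation of $\dksd^2$; both arguments are valid and yield the same factor of $2$.
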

See \cref{appendix:asymptotic-derivations} for proofs. For both results, the assumptions on the kernel are satisfied by most kernels common in the literature, such as Gaussian, inverse-multiquadric (IMQ) and any Mat\'ern kernels with smoothness greater than $2$. Similarly, the assumptions on the model are very weak given that the diffusion tensor $m$ can be adapted to guarantee consistency and asymptotic normality. 

We now prove analogous results for DSM. This time we show weak consistency, i.e. convergence in probability: $ \hat{\theta}_n^{\dsm} \xrightarrow{p} \theta_*^{\dsm} \defn  \text{argmin}_{\theta \in \Theta} \dsm_{m}(\mathbb{Q} \|\P_{\theta}) = \text{argmin}_{\theta \in \Theta} \int_{\X} F_\theta(x) \mathrm{d}\Q(x)$. This will be a sufficient form of convergence for asymptotic normality.
\begin{theorem}[\textbf{Weak Consistency of DSM}]
Let $\X$ be an open subset of $\R^d$, and $\Theta \subset \R^m$.
Suppose  $\log p_{\theta}(\cdot) \in C^2(\X)$ and $m \in C^1(\X)$,
 and $ \| \nabla_x \log p_{\theta}(x) \|\leq f_1(x)$ for $\mathbb{Q}$-a.e. $x$.
 Suppose also that
 $\| \nabla_x \nabla_x  \log p_{\theta}(x) |\leq f_2(x)$ on any compact set $C \subset \Theta$  for $\mathbb{Q}$-a.e. $x$, where 
 $\| m^{\top} \| f_1 \in L^2(\Q)$,
 $ \| \nabla \cdot (mm^{\top}) \| f_1 \in L^1(\Q)$, $ \| mm^{\top}\|_{\infty} f_2 \in L^1(\Q)$. 
 If either $\Theta$ is compact, or $\Theta$ and $\theta \mapsto F_{\theta}$ are convex and $\theta_*^{\dsm} \in \text{int}(\Theta)$, then $\hat{\theta}^{\dsm}_n$ is weakly consistent for $\theta_*^{\dsm}$.
\end{theorem}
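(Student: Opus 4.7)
The plan is a textbook M-estimator consistency argument. Discarding the $\theta$-independent term $\|m^\top\nabla\log q\|_2^2$ from $\dsm_m(\Q\|\P_\theta)$, set $F_\theta(x)\defn \|m(x)^\top\nabla\log p_\theta(x)\|_2^2 + 2\,\nabla\cdot(mm^\top\nabla\log p_\theta)(x)$, so that $M_n(\theta)\defn\hvf\dsm_m(\{X_i\}_{i=1}^n\|\P_\theta)=\frac1n\sum_{i=1}^n F_\theta(X_i)$ and $M(\theta)\defn\int F_\theta\,\dd\Q$ share the minimiser $\theta_*^{\dsm}$. I plan to (i) build an integrable envelope for $F_\theta$ that is uniform on compact subsets of $\Theta$; (ii) deduce continuity of $M$ and uniform convergence $M_n\to M$ on compacta via dominated convergence and a uniform law of large numbers; and (iii) conclude consistency by an argmin continuous-mapping argument, handling the compact and convex cases separately.

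For (i), the product rule yields
\[
\nabla\cdot(mm^\top\nabla\log p_\theta) \;=\; (\nabla\cdot(mm^\top))^{\top}\nabla\log p_\theta \;+\; \operatorname{tr}(mm^\top\nabla^2\log p_\theta),
\]
whence Cauchy--Schwarz and submultiplicativity give the pointwise bound
\[
|F_\theta(x)| \;\leq\; \|m^\top(x)\|^{2}f_1(x)^{2} \;+\; 2\|\nabla\cdot(mm^\top)(x)\|\,f_1(x) \;+\; 2\|mm^\top(x)\|_{\infty}\,f_2(x),
\]
uniformly in $\theta\in C$ for any compact $C\subset\Theta$. All three summands lie in $L^1(\Q)$ by the three stated envelope hypotheses (the first via $\|m^\top\|f_1\in L^2(\Q)$, squared), yielding a dominating integrable function independent of $\theta\in C$.

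For (ii), continuity of $\theta\mapsto F_\theta(x)$ for $\Q$-a.e.\ $x$ follows from $\log p_\theta\in C^2(\X)$ and $m\in C^1(\X)$. Dominated convergence then gives continuity of $M$, and the same envelope powers a standard uniform strong law of large numbers, so $\sup_{\theta\in C}|M_n(\theta)-M(\theta)|\xrightarrow{a.s.}0$ for every compact $C\subset\Theta$. For (iii), in the compact-$\Theta$ case I would apply the usual argmin continuous-mapping theorem (e.g.\ van der Vaart Theorem 5.7): uniform convergence over $\Theta$, continuity of $M$, and uniqueness of $\theta_*^{\dsm}$ (inherited from the divergence property $\dsm_m(\Q\|\P)=0\iff\Q=\P$ of \cref{prop:SM_is_Stein}) give $\hat\theta_n^{\dsm}\xrightarrow{p}\theta_*^{\dsm}$. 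In the convex case, both $M_n$ and $M$ are convex in $\theta$ on the open convex set $\Theta$, and the convexity lemma of Rockafellar--Wets upgrades pointwise in-probability convergence to uniform in-probability convergence on any compact set. Choosing a closed ball $B\subset\operatorname{int}(\Theta)$ around $\theta_*^{\dsm}$ on which $\theta_*^{\dsm}$ is the unique minimiser of $M$, convexity of $M_n$ forces its minimisers into $B$ with probability tending to one, reducing to the compact case.

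The main obstacle is step (i): one must expand $\nabla\cdot(mm^\top\nabla\log p_\theta)$ and match the resulting factors to the three envelope conditions term by term, in particular recognising that $\|m^\top\|f_1\in L^2(\Q)$ is the right condition to dominate the quadratic $\|m^\top\nabla\log p_\theta\|_2^2$ after squaring. Everything downstream---the ULLN on compacta, the argmin continuous-mapping theorem, and the convex-function localisation trick---is off-the-shelf.
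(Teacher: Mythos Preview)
Your proposal is correct and follows essentially the same route as the paper: the paper expands $F_\theta$ via the same product rule, obtains the identical three-term envelope $\|m^\top\|^2 f_1^2 + 2\|\nabla\cdot(mm^\top)\|f_1 + 2\|mm^\top\|_\infty f_2$, and then invokes Lemma~2.4 and Theorem~2.1 of Newey--McFadden (the standard envelope ULLN and argmin consistency theorem) in place of your van der Vaart citation, with the convex case handled by the same localisation-to-a-ball trick used in the DKSD consistency proof rather than Rockafellar--Wets.
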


\begin{theorem}[\textbf{Central Limit Theorem for DSM}]
Let $\X,\Theta$ be open subsets of $\R^d$ and $\R^m$ respectively.
Suppose  $\hat{\theta}^{\dsm}_n  \xrightarrow[]{p} \theta_*^{\dsm}$, $\theta \mapsto \log p_\theta(x)$ is twice continuously differentiable on a closed ball $ \bar B(\epsilon,\theta_*^{\dsm}) \subset \Theta$,
and that for $\mathbb{Q}$-a.e. $x \in \mathcal{X}$,
\begin{enumerate}
\item[\textbf{(i)}] $\| m(x) m^{\top}(x)\|+\| \nabla_x \cdot(m(x) m^{\top}(x)) \| \leq f_1(x),$ and $ 
\| \nabla_x \log p_\theta(x) \|
+  \| \nabla_{\theta} \nabla_x \log p_\theta(x) \|+  \| \nabla_{\theta} \nabla_x \nabla_x \log p_\theta(x) \| \leq f_2(x)$, with $f_1f_2, f_1 f_2^2 \in L^2(\Q)$

\item[\textbf{(ii)}] for $\theta \in \bar B(\epsilon,\theta^*) $, $\| \nabla_\theta \nabla_x \log p_\theta \|^2+\| \nabla_x \log p_\theta \| \|\nabla _\theta \nabla_\theta \nabla_x \log p_\theta \| +\| \nabla_\theta\nabla_\theta \nabla_x \log p_\theta \|
 +\|\nabla_\theta \nabla_\theta \nabla_x\nabla_x \log p_\theta \| \leq g_1(x)$, and $f_1g_1 \in L^1(\Q)$.
\end{enumerate}
 Then, if the information tensor is invertible at $\theta_*^{\dsm}$, we have
 \begin{talign}
\sqrt{n} \left( \hat{\theta}^{\dsm}_n- \theta_*^{\dsm}\right) \xrightarrow[]{d} \mathcal N \left (0,g^{-1}_{\dsm}\left(\theta_*^{\dsm}\right)  \Sigma_{\dsm} g^{-1}_{\dsm}\left(\theta_*^{\dsm}\right) \right).
 \end{talign}
 where $  \Sigma_{\dsm} = \int_{\mathcal{X}}  \nabla_{\theta} F_{\theta_*^{\dsm}}(x) \otimes \nabla_{\theta} F_{\theta_*^{\dsm}}(x)  \mathrm{d}\Q(x)$.
\end{theorem}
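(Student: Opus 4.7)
The plan is to run the classical M-estimator argument, mirroring the proof of the DKSD central limit theorem (\cref{DKSD-normality}). Write $\hvf L_n(\theta) \defn \hvf\dsm_m(\{X_i\}_{i=1}^n \| \P_\theta) = \frac{1}{n}\sum_{i=1}^n F_\theta(X_i)$, where $F_\theta(x) = \|m^\top \nabla \log p_\theta(x)\|_2^2 + 2\nabla\cdot(mm^\top \nabla \log p_\theta)(x)$, so that $\int F_\theta\dd\Q$ equals $\dsm_m(\Q\|\P_\theta)$ up to a $\theta$-independent constant. By weak consistency and $\theta_*^{\dsm}\in \text{int}(\Theta)$, with probability tending to one $\hat\theta_n^{\dsm}$ lies in $\bar B(\epsilon,\theta_*^{\dsm})$ and is an interior critical point of $\hvf L_n$, so $\nabla_\theta \hvf L_n(\hat\theta_n^{\dsm}) = 0$. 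A component-wise mean-value Taylor expansion around $\theta_*^{\dsm}$ then yields
\begin{talign*}
\sqrt{n}\bigl(\hat\theta_n^{\dsm}-\theta_*^{\dsm}\bigr) = -\hvf H_n(\tilde\theta_n)^{-1}\,\sqrt{n}\,\nabla_\theta \hvf L_n(\theta_*^{\dsm}),
\end{talign*}
with $\hvf H_n \defn \nabla^2_\theta \hvf L_n$ and $\tilde\theta_n$ on the segment between $\theta_*^{\dsm}$ and $\hat\theta_n^{\dsm}$, so that $\tilde\theta_n\xrightarrow{p}\theta_*^{\dsm}$.

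For the score factor, expanding $\nabla_\theta F_\theta$ via Stoke's theorem produces finitely many products of $m$ and derivatives of $\log p_\theta$ that are dominated by $C(f_1 f_2 + f_1 f_2^2)$ using hypothesis (i); this envelope is in $L^2(\Q)$ by assumption, licensing the interchange of differentiation and integration and showing $\nabla_\theta F_{\theta_*^{\dsm}}(X) \in L^2(\Q)$. Stationarity of $\theta\mapsto \int F_\theta\dd\Q$ at the interior minimiser $\theta_*^{\dsm}$ then gives $\E_\Q[\nabla_\theta F_{\theta_*^{\dsm}}(X)] = 0$, and the multivariate CLT delivers $\sqrt{n}\,\nabla_\theta\hvf L_n(\theta_*^{\dsm})\xrightarrow{d}\mathcal{N}(0,\Sigma_{\dsm})$ with $\Sigma_{\dsm}$ as in the statement.

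For the Hessian factor I would invoke a uniform law of large numbers on the compact ball $\bar B(\epsilon,\theta_*^{\dsm})$. Expanding $\nabla^2_\theta F_\theta$ produces finitely many products of $m$, $\nabla_x m$, first and second $x$-derivatives of $\log p_\theta$, and their $\theta$-derivatives of order at most two; hypothesis (ii) dominates all such products uniformly on $\bar B(\epsilon,\theta_*^{\dsm})$ by the $L^1(\Q)$ envelope $C f_1 g_1$. Consequently $\sup_{\theta\in \bar B(\epsilon,\theta_*^{\dsm})}\|\hvf H_n(\theta) - \nabla^2_\theta \int F_\theta \dd\Q\|\xrightarrow{p}0$, and continuity of $\theta\mapsto \nabla^2_\theta \int F_\theta \dd\Q$ combined with $\tilde\theta_n\xrightarrow{p}\theta_*^{\dsm}$ gives $\hvf H_n(\tilde\theta_n)\xrightarrow{p} \nabla^2_\theta\int F_\theta\dd\Q$ evaluated at $\theta_*^{\dsm}$. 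The stationarity computation underlying \cref{app:information-dsm} identifies this limiting Hessian with the information tensor $g_{\dsm}(\theta_*^{\dsm})$. Invertibility of $g_{\dsm}(\theta_*^{\dsm})$, the continuous mapping theorem applied to matrix inversion, and Slutsky's lemma then combine the two limits to give the asserted $\mathcal{N}\bigl(0,\,g_{\dsm}^{-1}(\theta_*^{\dsm})\Sigma_{\dsm} g_{\dsm}^{-1}(\theta_*^{\dsm})\bigr)$ distribution.

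The main obstacle is the Hessian ULLN: one must carefully enumerate all terms produced by two $\theta$-differentiations of the quadratic part $\|m^\top \nabla \log p_\theta\|_2^2$ and of the divergence $\nabla\cdot(mm^\top\nabla \log p_\theta)$ and verify that each is dominated uniformly in $\theta\in\bar B(\epsilon,\theta_*^{\dsm})$ by a $\Q$-integrable function of $x$ alone. Assumptions (i) and (ii) are tailored exactly to this bookkeeping — for example, the term $\|\nabla_\theta\nabla_x\log p_\theta\|^2$ in (ii) controls cross-products from twice differentiating the quadratic, while $\|\nabla_\theta\nabla_\theta\nabla_x\nabla_x\log p_\theta\|$ controls the analogous second derivatives of the divergence term.
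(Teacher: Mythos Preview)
Your proposal is correct and follows essentially the same route as the paper's proof: bound $\nabla_\theta F_{\theta_*^{\dsm}}$ by $C f_1 f_2(f_2+1)$ via hypothesis (i) to get $L^2(\Q)$ integrability and apply the multivariate CLT to the score; bound $\nabla_\theta\nabla_\theta F_\theta$ uniformly on $\bar B(\epsilon,\theta_*^{\dsm})$ by $C f_1 g_1$ via hypothesis (ii) to get a ULLN for the empirical Hessian; identify the limiting Hessian with $g_{\dsm}(\theta_*^{\dsm})$ using the computation behind \cref{app:information-dsm}; and combine via the standard M-estimator machinery. The only cosmetic difference is that the paper packages the Taylor expansion and Slutsky step by citing \cite[Lemma~2.4 and Theorem~3.1]{newey1994large}, whereas you spell the argument out directly---and note that ``Stokes' theorem'' plays no role in differentiating $F_\theta$ with respect to $\theta$; that step is plain differentiation of the already-integrated-by-parts expression.
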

All of the proofs can be found in \cref{app:DSM-asymptotics}. An important special case covered by our theory is that of natural exponential families, which admit densities of the form $\log p_{\theta}(x)  \propto \langle \theta, T(x)\rangle_{\R^m}+b(x)$.
If $K$ is IPD with bounded derivative up to order $2$, $\nabla T$ has linearly independent rows,  $m$ is invertible, 
and  $  \| \nabla T m\|, \|\nabla_x b \| \|m \|, \| \nabla_x m \|+ \|m \| \in L^2(\Q)$, 
then the sequence of minimum $\dksd$ and $\dsm$ estimators are strongly consistent and asymptotically normal (see \cref{app:CLT-exponential-family}).

Before concluding this section, we turn to a concept of importance to practical inference: robustness when subjected to corrupted data \cite{Huber2009}. We quantify the robustness of DKSD and DSM estimators in terms of their influence function, which can be interpreted as measuring the impact of an infinitesimal perturbation of a distribution $\P$ by a Dirac located at  a point $z \in \X$ on the estimator.  If $\theta_{\mathbb{Q}}$ denotes the unique minimum SD estimator for $\Q$, then the influence functions is given by $\mbox{IF}(z, \Q) \defn \partial_{t}\theta_{\mathbb{Q}_t}|_{t=0}$ if it exists, where $\Q_t = (1-t)\Q + t \delta_{z}$, for $t \in [0,1]$. An estimator is said to be bias robust if $\mbox{IF}(z, \Q)$ is bounded in $z$.

\begin{proposition}[\textbf{Robustness of DKSD estimators}]\label{prop:robustness_dksd}  Suppose that the map $\theta \rightarrow \P_{\theta}$ over $\Theta$ is injective, then $\IF(z,\P_\theta) = g_{\dksd}(\theta)^{-1} \int_X  \nabla_\theta \stein(z,y) \dd \P_\theta(y)
$.
  Moreover, suppose that  $y \mapsto F(x,y)$ is $\Q$-integrable for any $x$, where $F(x,y) = \lVert K(x,y)s_p(y)\rVert$,  $\lVert K(x,y)\nabla_{\theta}s_p(y)\rVert$,  $\lVert \nabla_{x} K(x,y)s_p(y)\rVert$,  $\lVert \nabla_{x} K(x,y)\nabla_{\theta}s_p(y)\rVert$, $\lVert \nabla_{y}\nabla_{x}\left(K(x,y)m(y)\right)\rVert$,$\lVert \nabla_{y}\nabla_{x}\left(K(x,y)\nabla_{\theta}m(y)\right)\rVert$. 
Then if $x \mapsto (\lVert s_p(x) \rVert + \lVert \nabla_\theta s_p(x) \rVert)  \int F(x,y) \mathbb{Q}(dy)|_{\theta_*^{\dksd}}$ is bounded, the $\dksd$ estimators are bias robust:  $\sup_{z\in\mathcal{X}} \|\IF(z,\Q)\| < \infty$.
\end{proposition}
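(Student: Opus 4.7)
The plan is to derive the influence function via an implicit-function argument applied to the stationarity condition of the population DKSD objective at the contaminated measure $\Q_t$. Since $\theta_{\Q_t}$ minimises $\theta\mapsto\int\!\int \stein_\theta(x,y)\,\dd\Q_t(x)\,\dd\Q_t(y)$, it satisfies $\int\!\int \nabla_\theta\stein_{\theta_{\Q_t}}(x,y)\,\dd\Q_t(x)\,\dd\Q_t(y) = 0$ for every $t$. Differentiating this identity at $t=0$, using $\dd\Q_t/\dd t|_{t=0}=\delta_z-\P_\theta$, the symmetry $\stein_\theta(x,y)=\stein_\theta(y,x)$ inherited from the feature-map representation $\stein_\theta(x,y)=\langle\phi_\theta(x),\phi_\theta(y)\rangle_{\H^d}$ with $\phi_\theta(x)=\mathcal S_{p_\theta}^{m,1}K_x$, and the well-specified consistency $\theta_{\Q_0}=\theta$ (valid under injectivity together with \cref{DKSD-divergence}), yields
\begin{talign*}
\Bigl[\int\!\int \nabla_\theta^2 \stein_\theta\,\dd\P_\theta\,\dd\P_\theta\Bigr]\IF(z,\P_\theta) + 2\!\int\!\nabla_\theta\stein_\theta(z,y)\,\dd\P_\theta(y) - 2\!\int\!\int\!\nabla_\theta\stein_\theta\,\dd\P_\theta\,\dd\P_\theta \;=\; 0,
\end{talign*}
and the last term vanishes by the first-order condition at the minimiser $\theta$. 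To identify the Hessian bracket with $g_{\dksd}(\theta)$, observe that the Stein identity $\int\phi_\theta\,\dd\P_\theta=0$ reduces $\nabla_\theta^2\!\int\!\int\stein_\theta\,\dd\P_\theta\,\dd\P_\theta$ to twice the Gramian of the vectors $V_i\defn\nabla_{\theta^i}\!\int\phi_\theta\,\dd\P_\theta\in\H^d$; a short reproducing-kernel computation yields $V_i=\int K(\cdot,x)\,m_\theta(x)^\top\nabla_x\partial_{\theta^i}\log p_\theta(x)\,\dd\P_\theta(x)$, and applying the reproducing identity $\langle K(\cdot,x)u,K(\cdot,y)v\rangle_{\H^d}=u^\top K(x,y)v$ exhibits $\langle V_i,V_j\rangle_{\H^d}$ precisely as the entry $g_{\dksd}(\theta)_{ij}$ from \cref{information-dksd}. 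Solving the displayed equation for $\IF(z,\P_\theta)$ then gives the stated closed-form expression.

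For the bias-robustness conclusion, the plan is to expand
\[
\stein_\theta(z,y) = \frac{1}{p_\theta(z)p_\theta(y)}\nabla_y\cdot\nabla_x\cdot\bigl[p_\theta(z)m_\theta(z) K(z,y) m_\theta(y)^\top p_\theta(y)\bigr]
\]
via the product rule, producing a finite collection of terms. Applying $\nabla_\theta$ and accounting for each term shows that every summand factorises as a function of $z$ controlled by $\|s_p(z)\|$, $\|\nabla_\theta s_p(z)\|$, or a bounded factor arising from $m_\theta(z)$ or $\nabla_x m_\theta(z)$, times a function of $y$ dominated by one of the six $F$-quantities listed in the hypothesis. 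Integrating in $y$ against $\P_\theta$ therefore gives
\begin{talign*}
\Bigl\|\int\nabla_\theta\stein_\theta(z,y)\,\dd\P_\theta(y)\Bigr\| \;\leq\; C\bigl(\|s_p(z)\|+\|\nabla_\theta s_p(z)\|\bigr)\int F(z,y)\,\dd\P_\theta(y),
\end{talign*}
which by the final hypothesis of the proposition is uniformly bounded in $z$. Since $g_{\dksd}(\theta)^{-1}$ is a fixed invertible matrix, the IF formula established above then implies $\sup_{z\in\X}\|\IF(z,\P_\theta)\|<\infty$.

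The principal obstacle is the bookkeeping in the second step: the double-divergence product-rule expansion of $\stein_\theta(z,y)$ followed by $\nabla_\theta$ produces many summands, and one must check that each is majorised by exactly one of the six $F$-quantities combined with one of the two allowed $z$-dependent factors $\|s_p(z)\|$ or $\|\nabla_\theta s_p(z)\|$ (or a bounded contribution from $m_\theta$). The six integrability hypotheses of the proposition have been tailored precisely so that this enumeration closes. Differentiation under the integral sign throughout the first step is routine by dominated convergence using the same integrability controls.
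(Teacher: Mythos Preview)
Your proposal is correct and follows essentially the same route as the paper. Both arguments differentiate the first-order optimality condition $\int\!\int\nabla_\theta\stein_{\theta_t}\,\dd\Q_t\,\dd\Q_t=0$ with respect to $t$ (the paper via a mean-value expansion, you by direct differentiation at $t=0$), cancel the $\Q\otimes\Q$ gradient term by stationarity at $\theta_0$, identify the Hessian $\int\!\int\nabla_\theta^2\stein_{\theta_0}\,\dd\Q\,\dd\Q$ with $2g_{\dksd}(\theta_0)$, and then bound $\int\nabla_\theta\stein_{\theta_0}(z,y)\,\dd\Q(y)$ termwise using the product-rule expansion of the Stein kernel. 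Your Hessian identification via the feature-map representation $\stein_\theta=\langle\phi_\theta,\phi_\theta\rangle_{\H^d}$ together with the Stein identity $\int\phi_{\theta_0}\,\dd\P_{\theta_0}=0$ is a clean alternative to the paper's direct computation through the $\delta_{p_\theta,p_{\theta_0}}$ expansion, but the two computations are equivalent.
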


The analogous results for DSM estimators can be found in \cref{appendix:robustness}. 
Consider a Gaussian location model, i.e. $p_{\theta} \propto \exp(-\lVert x - \theta \rVert^2_{2})$, for $\theta \in \R^d$. The Gaussian kernel satisfies the assumptions of \cref{prop:robustness_dksd} so that $\sup_{z}\|\IF(z,\Q)\| < \infty$, even when $m=I$. 
Indeed $\|\IF(z,\P_{\theta})\| \leq C(\theta) e^{- \|z-\theta\|^2/4}\|z-\theta \|$, where $z\mapsto e^{- \|z-\theta\|^2/4}\|z-\theta \|$  is uniformly bounded over $\theta$.
In contrast, the SM estimator has an influence function of the form $\IF(z,\Q) = z - \int_{\X} x \dd \Q(x)$, which is unbounded with respect to $z$, and is thus not robust. This clearly demonstrates the importance of carefully selecting a Stein class for use in minimum SD estimators. An alternative way of inducing robustness is to introduce a spatially decaying diffusion matrix in DSM.  To this end, consider the minimum DSM estimator with scalar diffusion coefficient $m$. Then $\theta_{\text{DSM}} = (\int_{\X} m^2(x) \dd \Q(x))^{-1}\left(\int_{\X} m^2(x) x \dd \Q(x) +  \int_{\X} \nabla m^2(x)\dd \Q(x) \right)$.   A straightforward calculation yields that the associated influence function will be bounded if both $m(x)$ and $\|\nabla m(x)\|$ decay as $\lVert x \rVert \rightarrow \infty$. This clearly demonstrates another significant advantage provided by the flexibility of our family of diffusion SD, where the Stein operator also plays an important role.

\section{Numerical Experiments}\label{numerics-dksd}

In this section, we explore several examples which demonstrate worrying breakpoints for SM, and highlight how these can be straightforwardly handled using KSD, DKSD and DSM.

\subsection{Rough densities: the symmetric Bessel distributions}

\begin{figure}[t]
\begin{center}

\includegraphics[width=0.2\textwidth,clip,trim = 0.2cm 0 0 0]{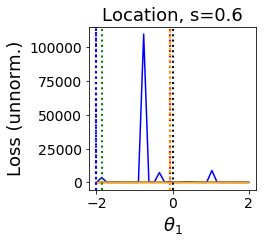}
\includegraphics[width=0.18\textwidth,clip,trim = 0 0 0 0]{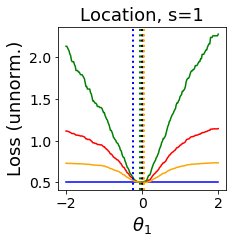}
\includegraphics[width=0.19\textwidth,clip,trim = 0 0 0 0]{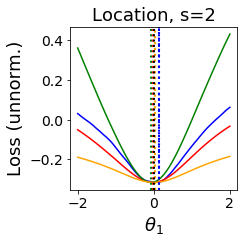}
\includegraphics[width=0.20\textwidth,clip,trim = 9cm 0 0 0]{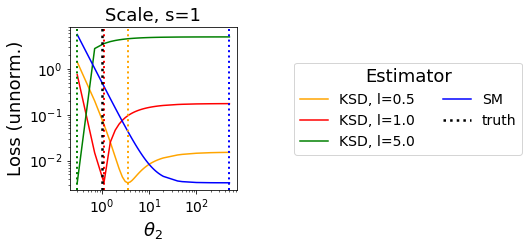} 
\vspace{-5mm}
\end{center}
\caption{\textit{Minimum SD Estimators for the Symmetric Bessel Distribution}. We consider the case where $\theta_1^*=0$ and $\theta_2^*=1$ and $n=500$ for a range of smoothness parameter values $s$ in $d=1$.}
\vspace{-2mm}
\label{fig:symmetric_bessel}
\end{figure}

A major drawback of SM is the smoothness requirement on the target density. However, this can be remedied by choosing alternative Stein classes, as will be demonstrated in the case of the symmetric Bessel distributions. Let $K_{s-d/2}$ denote the modified Bessel function of the second kind with parameter $s-d/2$. This distribution generalises the Laplace distribution \citep{Kotz2001} and has log-density:  
$\log p_{\theta}(x)  \propto (\|x-\theta_1\|_2/\theta_2)^{(s-d/2)} K_{s-d/2}(\|x-\theta_1\|_2/\theta_2)$ where $\theta_1 \in \mathbb{R}^d$ is a location parameter and $\theta_2>0$ a scale parameter. The parameter $s\geq d/2$ encodes smoothness.

We compared SM with KSD based on a Gaussian kernel and a range of lengthscale values in \cref{fig:symmetric_bessel}. These results are based on $n=500$ IID realisations in $d=1$. The case $s=1$ corresponds to a Laplace distribution, and we notice that both SM and KSD are able to obtain a reasonable estimate of the location. For rougher values, for example $s=0.6$, we notice that KSD outperforms SM for certain choices of lengthscales, whereas for $s=2$, SM and KSD are both able to recover the parameter. Analogous results for scale can be found in \cref{appendix:symbessel}, and \cref{appendix:experiments_robustness} illustrates the trade-off between efficiency and robustness on this problem.

\subsection{Heavy-tailed distributions: the non-standardised student-t distributions}

\begin{figure}[t]
\begin{center}
\includegraphics[width=0.25\textwidth,clip,trim = 0 0 0 0]{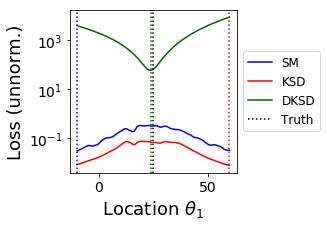}
\includegraphics[width=0.26\textwidth,clip,trim = 0 0 0 0]{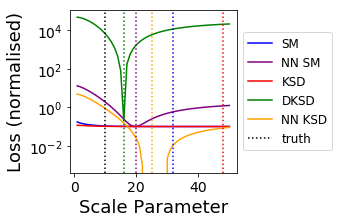}
\includegraphics[width=0.18\textwidth,clip,trim = 0 0 0 0]{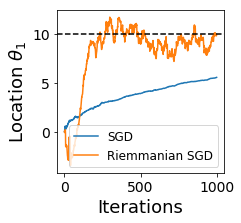}
\includegraphics[width=0.18\textwidth,clip,trim = 0 0 0 0]{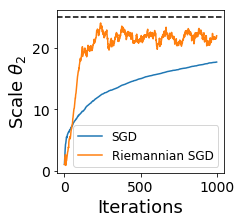} 
\vspace{-5mm}
\end{center}
\caption{\textit{Minimum SD Estimators for Non-standardised Student-t Distributions}. We consider a student-t problem with $\nu=5, \theta_1^*=25, \theta_2^*=10$ and $n=300$. }
\label{fig:studentt}
\vspace{-2mm}
\end{figure}

A second drawback of standard SM is that it is inefficient for heavy-tailed distributions. To demonstrate this, we focus on non-standardised student-t distributions: $p_\theta(x) \propto (1/\theta_2)(1+(1/\nu)\|x-\theta_1\|_2^2/\theta_2^2)^{-(\nu+1)/2}$ where $\theta_1\in \mathbb{R}$ is a location parameter and $\theta_2>0$ a scale parameter. The parameter $\nu$ determines the degrees of freedom: when $\nu = 1$, we have a Cauchy distribution, whereas $\nu = \infty$ gives the Gaussian distribution. For small values of $\nu$, the student-t distribution is heavy-tailed. 

We illustrate SM and KSD for $\nu=5$ in Fig. \ref{fig:studentt}, where we take an IMQ kernel $k(x,y;c,\beta) = (c^2+\|x-y\|^2_2)^{\beta}$ with $c=1.$ and $\beta=-0.5$. This choice of $\nu$ guarantees the first two moments exist, but the distribution is still heavy-tailed. In the left plot, both SM and KSD struggle to recover $\theta_1^*$ when $n=300$, and the loss functions are far from convex. However, DKSD with $m_\theta(x) = 1 + \|x-\theta_1\|^2/\theta_2^2$ can estimate $\theta_1$ very accurately. In the middle left plot, we instead estimate $\theta_2$ with SM, KSD and their correponding non-negative version (NNSM \& NNKSD, $m(x)=x$), which are particularly well suited for scale parameters. NNSM and NNKSD provide improvements on SM and KSD, but DKSD with $m_\theta(x)=((x-\theta_1)/\theta_2)(1+(1/\nu)\|x-\theta_1\|_2^2/\theta_2^2)$ provides significant further gains. On the right-hand side, we also consider the advantage of the Riemannian SGD algorithm over SGD by illustrating them on the KSD loss function with $n=1000$. Both algorithms use constant stepsizes and minibatches of size $50$. As demonstrated, Riemmannian SGD converges within a few dozen iterations, whereas SGD hasn't converged after $1000$ iterations. Additional experiments on the robustness of these estimators is also available in \cref{appendix:experiments_robustness}.

\subsection{Robust estimators for light-tailed distributions: the generalised Gamma distributions}

Our final example demonstrates a third failure mode for SM: its lack of robustness for light-tailed distributions.
We consider generalised gamma location models with likelihoods $p_\theta(x) \propto \exp(-(x-\theta_1)^{\theta_2})$ where $\theta_1$ is a location parameter and $\theta_2$ determines how fast the tails decay. The larger $\theta_2$, the lighter the tails will be and vice-versa. We set $n=300$ and corrupt $80$ points by setting them to the value $x=8$. A robust estimator should obtain a good approximation of $\theta^*$ even under this corruption. The left plot in \cref{fig:gengamma} considers a Gaussian model (i.e. $\theta_2^*=2$); we see that SM is not robust for this very simple model whereas DSM with $m(x)=1/(1+\|x\|^\alpha), \alpha=2$ is robust. The middle plot shows that DKSD with this same $m$ is also robust, and confirms the analytical results of the previous section. Finally, the right plot considers the case $\theta_2^*=5$ and we see that $\alpha$ can be chosen as a function of $\theta_2$ to guarantee robustness. In general, taking $\alpha \geq \theta_2^*-1$ will guarantee a bounded influence function. Such a choice allows us to obtain robust estimators even for models with very light tails.

\begin{figure}[t]
\begin{center}
\includegraphics[width=0.3\textwidth,trim={0.1cm 0 0.1cm 0},clip]{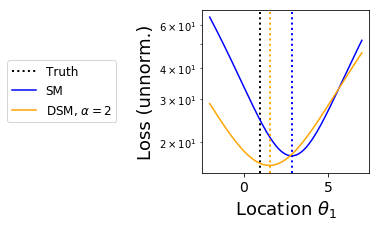}
\hspace{2mm}
\includegraphics[width=0.3\textwidth,trim={0.1cm 0 0.1cm 0},clip]{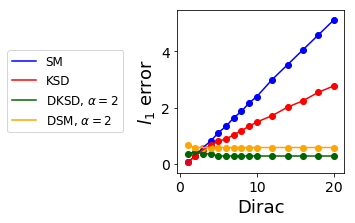}
\hspace{2mm}
\includegraphics[width=0.3\textwidth,trim={0.1cm 0 0.1cm 0},clip]{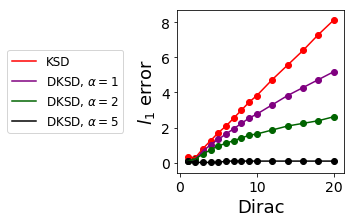}
\end{center}
\vspace{-5mm}
\caption{\textit{Minimum SD Estimators for Generalised Gamma Distributions under Corruption}. We consider the case where $\theta_1^*=0$ and $\theta_2^*=2$ (left and middle) or $\theta_2^*=5$ (right). Here $n=300$.}
\label{fig:gengamma}
\vspace{-3mm}
\end{figure}

\subsection{Efficient estimators for a simple unnormalised model}

\begin{wrapfigure}{r}{0.3\textwidth}
\vspace{-5mm}
   \centering
   \includegraphics[width=0.28\textwidth]{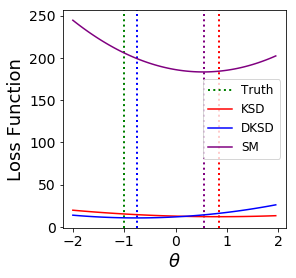}
   \vspace{-5mm}
   \caption{\textit{Estimators for a Simple Intractable Model}}
\end{wrapfigure}

Finally we consider a simple intractable model from~\cite{Liu2018Fisher}: $p_\theta(x) \propto \exp(\eta(\theta)^\top \psi(x))$ where $\psi(x) = (\sum_{i=1}^d x_i^2,\sum_{i=3}^d x_1 x_i,\tanh(x))^\top$ and $\tanh$ is applied elementwise to $x$ and $\eta(\theta) = (-0.5,0.2,0.6,0,0,0,\theta,0)$. This model is intractable since we cannot easily compute its normalisation constant due to the difficulty of integrating the unnormalised part of the model. Our results based on $n=200$ samples show that DKSD with $m(x)=\text{diag}(1/(1+x))$ is able to recover $\theta^*=-1$, whereas both SM and KSD provide less accurate estimates of the parameter. This illustrates yet again that a judicious choice of diffusion matrix can significantly improve the efficiency of our estimators.

\section{Conclusion}

This paper introduced a general approach for constructing minimum distance estimators based on Stein's method, and demonstrated that many popular inference schemes can be recovered as special cases. This class of algorithms gives us additional flexibility through the choice of an operator and function space (the Stein operator and Stein class), which can be used to tailor the inference scheme to trade-off efficiency and robustness. However, this paper only scratches the surface of what is possible with minimum SD estimators. Looking ahead, it will be interesting to identify diffusion matrices which increase efficiency for important classes of problems in machine learning. 
One example on which we foresee progress are the product of student-t experts models \citep{Kingma2010,Swersky2011,Welling2003}, whose heavy tails render estimation challenging for SM. Advantages could also be found for other energy models, such as large graphical models where the kernel could be adapted to the graph \citep{Vishwanathan2010}.

\subsubsection*{Acknowledgments}
AB was supported by a Roth scholarship from the Department of Mathematics at Imperial College London. FXB was supported by the EPSRC grants [EP/L016710/1, EP/R018413/1]. AD and MG were supported by the Lloyds Register Foundation Programme on Data-Centric Engineering, the UKRI Strategic Priorities Fund under the EPSRC Grant [EP/T001569/1] and the Alan Turing Institute under the EPSRC grant [EP/N510129/1]. MG was supported by the EPSRC grants [EP/J016934/3, EP/K034154/1, EP/P020720/1, EP/R018413/1].
Finally, we thank Jiaxin Shi for exposing that MPFL has two distinct definitions.

\bibliography{Ref-KSD_estimators}
\bibliographystyle{abbrvnat}

\newpage
\appendix

{\Large
\begin{center}
\textbf{Supplementary Material} \label{supp-mat-dksd}
\end{center}
}

This document provides additional details for the paper ``Minimum Stein Discrepancy Estimators''. \cref{appendix:Background_material} contains background technical material required to understand the paper, \cref{appendix:derivations} derives the minimum SD estimators from first principles and \cref{appendix:information-semi-metrics} derives the information metrics for DKSD and DSM. \cref{appendix:asymptotic-derivations} contains proof of all asymptotic results including consistency and central limit theorems for DKSD and DSM, whilst \cref{appendix:robustness} discusses their robustness.

Our derivations will use standard operators from vector calculus which we summarise in \cref{vec-calculus}. We will additionally introduce the following notation. 
We write $f \leqsim g$ if there is a constant $C>0$ for which $f(x) \leq C g(x)$ for all $x$.
We set $\Q f \defn \int f \dd \Q$ and use $\Gamma(\mathcal W, \mathcal Y)$ for the set of maps $\mathcal W \to \mathcal Y$ when $\mathcal W \neq \mathcal \X$.


\section{Background Material} \label{appendix:Background_material}

In this section, we provide background material which is necessary to follow the proofs in the following sections. This includes background in vector calculus, stochastic optimisation over manifolds and vector-valued reproducing kernel Hilbert spaces.

\subsection{Background on Vector Calculus}\label{vec-calculus}

The following section contains background and important identities from vector calculus. For a function $g \in \field{\X,\R}$, $v\in \field{\X, \R^d}$ and $A \in \field{\X, \R^{d\times d}}$ with components
$A_{ij}$, $v_i$, $g$, we have $(\nabla g)_i = \partial_i g$, $(v\cdot A)_i = v_j A_{ji} = (v^{\top}A)_i$, $(\nabla \cdot A)_i \;=\; \partial_j A_{ji}$ which must be interpreted as the components of row-vectors; $(Av)_i \;=\;  A_{ij}v_j$ which are the components of a column vector.
Moreover
 $(\nabla v )_{ij}= \partial_j v_i$,
 $\nabla^2f \defn \nabla(\nabla f)$,   $A:B \defn \langle A,B \rangle = \tr(A^{\top}B)= A_{ij}B_{ij}$.
 We have the following identities (where in the last equality we treat $\nabla \cdot A$ and $\nabla g$ as column vectors)
 \begin{talign}
 \nabla \cdot(gv) &= \partial_i(gv_i)= v_i\partial_i g  +g \partial_i v_i  = (\nabla g) v+ g \nabla \cdot v=\nabla g \cdot v+g \nabla \cdot v,\\
 \nabla \cdot (gA) &= \partial_i(gA_{ij})e_j= \left( A_{ij}\partial_i g + g \partial_i A_{ij} \right)e_j=\nabla g\cdot A + g \nabla \cdot A=\nabla g^{\top}A + g \nabla \cdot A,\\
 \nabla \cdot(Av)&=\partial_i(A_{ij}v_j)=
 (\nabla \cdot A)v+\tr[A \nabla v]= (\nabla \cdot A)\cdot v+ \tr[A \nabla v].
\end{talign}  

\subsection{Background on Norms}

For $F \in \Gamma(\X,\R^{n_1 \times n_2})$
we set $\|F \|_p^p \defn \int \|F(x)\|_p^p \dd \Q(x)$, where $\|F(x) \|_p$ is the vector $p$-norm on $\R^{n_1 \times n_2}$ when $n_2=1$, else it is the induced operator norm.
If $v \in \Gamma(\X,\R^{n_1})$,
then $\| v \|_p^p = \int \| v(x) \|_p^p \dd x = \int \sum_i |v_i(x) |^p \dd x = \sum_i \|v_i \|_p^p$, hence 
 $v  \in L_p(\Q)$ iff $v_{i} \in L_p(\Q)$ for all $i$, and similarly $F  \in L_p(\Q)$ iff $F_{ij} \in L_p(\Q)$ for all $i,j$ since the induced norm $\|F(x) \|_p$ and the vector norm $\|F \|^p_{vec} \defn \sum_{ij}|F_{ij}(x)|^p$ are equivalent.


\subsection{Background on Vector-valued RKHS}\label{vector-RKHS}
A Hilbert space $\H$ of functions $\X \to \R^d$ is a RKHS if $ \| f(x)\|_{\R^d} \leq C_x \| f \|_{\h}$.
It follows that the evaluation ``functional" $\delta_x:\h \to \R^d$ is continuous, for any $x$. Moreover for any $x \in \X,v \in \R^d$, the linear map $f\mapsto v \cdot f(x)$ is cts. By the Riesz representation theorem, there exists $K_xv \in \h$ s.t. $ v \cdot f(x) = \langle K_xv , f \rangle $. From this we see that $K_xv$ is linear in $v$ (turns out linear combinations of $K_{x_i}v_i$ are dense in $\h$), and $K_x^{\ast} = \delta_x$. We define $K: \X \times \X \to \End(\R^d)$ by
\begin{talign} 
 K(x,y)v & \defn (K_yv)(x) =\delta_x \delta^{\ast}_y v. 
 \end{talign}
 It follows that $ K(x,y) = K(y,x)^{\ast}$ and $ u \cdot K(x,y)v = \langle K_y v, K_x u \rangle $. Denote by $e_i$ the $i^\text{th}$ vector in the standard basis of $\R^d$. From this we can get the components of the matrix: 
\begin{talign}
\left( K(x,y) \right)_{ij}& = \langle K_x e_i , K_y e_j \rangle.
\end{talign}
We have for any $v_i,x_j$, $\sum_{j,k} v_j \cdot K(x_j, x_k) v_k \geq 0$.


\subsection{Background on Separable Kernels}\label{separable-kernels}

Consider the $d$ dimensional product space $\mathcal{H}^d$ of function $f: \mathcal X \;\to\; \mathbb R^d$ with components $f_i \in \mathcal H_i$ and $\mathcal{H}_i$ 
is a RKHS  with  kernel $C^2$ kernel $k^i: \mathcal X \times \mathcal X  \;\rightarrow \; \mathbb R$.
  Let $K   : \mathcal X \times \mathcal X \;\rightarrow \; \End(\mathbb R^d) \; \cong \; \mathbb R^{d \times d }$ be the kernel of $\mathcal H^d $ (see \cref{vector-RKHS}). 
Note if $K_x \defn K(x,\cdot): \mathcal X \; \rightarrow \; \End (\mathbb R^d)$, and if $v \in \mathbb R^d$, then $K_x v  \in \mathcal H^d$. The reproducing property then states that $\forall f \in \mathcal H^d$: $\langle f(x), v \rangle_{\mathbb R^d}  = \langle f, K(\cdot,x)v \rangle_{\mathcal H^d}$. 
Moreover for the kernel $ K=\mathrm{diag}(\lambda_1 k^1, \ldots , \lambda_d k^d)$ we will prove below that $\langle f, g \rangle_{\mathcal H^d} = \frac 1 {\lambda_i} \sum_i \langle f_i, g_i \rangle_{\mathcal H_i}$, whereas for $K=Bk$ where $B$ is symmetric and invertible we should have $ \langle f, g \rangle_{\mathcal H^d}  = \sum_{ij} B^{-1}_{ij} \langle f_i, g_j \rangle_{\mathcal H}$.

Given a real-valued kernel $k_i$ on $\X$, consider $K = \text{diag}(\lambda_1 k_1,\ldots, \lambda_n k_n)$. Let $f = \sum_j \delta_{x_j}^{\ast} v_j$. 
Recall this is a dense subset of $\H^d$: we will derive the RKHS norm for this dense subset and by continuity this will hold for any function.
Given the norm, the formula for the inner product will follow by the polarization identity.
We have 
\begin{talign} 
f_i(x) & = \delta_x(f) \cdot e_i = \delta_x  \delta_{x_j}^{\ast} v_j \cdot e_i = K(x,x_j) v_j \cdot e_i \\
& = \text{diag}(\lambda_1 k_1,\ldots, \lambda_n k_n)(x,x_j) v_j \cdot e_i = \lambda_i k_i(x,x_j) v_j^i
\end{talign}
\begin{talign}
\| f \|_{\h_K}^2 = \langle \delta_{x_j}^{\ast} v_j , \delta^{\ast}_{x_l} v_l \rangle_{\h_K} = 
 v_j \cdot K(x_j,x_l) v_l = v^i_j \lambda_i k_i(x_j,x_l) v^i_l
\end{talign}
On the other hand, $ \sum_i \frac 1 {\lambda_i} \langle f_i, f_i \rangle_{k_i} =  \sum_i \frac 1 {\lambda_i} \lambda_i^2 v^i_j v^i_l k_i(x_j,x_l)$. Thus $\|f\|_{\h_K}^2 = \frac{1}{ \lambda_i} \sum_i \langle f_i, f_i \rangle_{k_i}$.

For a symmetric positive definite matrix $B$, consider the kernel on $\h$ $ K(x,y) \defn k(x,y)B $. Let $f = \sum_j \delta_{x_j}^{\ast} v_j$. We have:
\begin{talign}
f_i(x) = \delta_x(f) \cdot e_i = \delta_x  \delta_{x_j}^{\ast} v_j \cdot e_i = K(x,x_j) v_j \cdot e_i = Bv_j \cdot e_i k_{x_j}(x)
\end{talign} 
 This implies $f_i \in \h_k$. Then 
\begin{talign}
\| f \|_{\h_K}^2 = \langle \delta_{x_j}^{\ast} v_j , \delta^{\ast}_{x_l} v_l \rangle_{\h_K} = 
 v_j \cdot K(x_j,x_l) v_l = k(x_j,x_l) v_j \cdot B v_l.
\end{talign}
On the other hand $ \langle f_i, f_j \rangle_k = e_i^{\top} Bv_r e_j^{\top} Bv_s k(x_s,x_r)$. Notice 
\begin{talign}
B^{-1}_{ij} e_i^{\top} Bv_r & = 
 B^{-1}_{ij}  B_{il}v_r^l= \delta_{lj} v_r^l = v^j_r.
 \end{talign}
So we have:
\begin{talign}
B^{-1}_{ij}\langle f_i, f_j \rangle_k = v^j_r e_j^{\top} Bv_s k(x_s,x_r) = v^j_r B_{ja}v^a_s k(x_s,x_r) = v_r \cdot B v_s k(x_s,x_r)
\end{talign}

\subsection{Background on Stochastic Optimisation on Riemmannian Manifolds} \label{app:riemannian-grad-descent}

The gradient flow of a curve $\theta$ on a complete connected Riemannian manifold $\Theta$ (for example a Hilbert space) is the solution to $\dot \theta (t) = - \nabla_{\theta(t) } \sd(\Q\| \P_{\theta}) $, where $\nabla_{\theta}$ is the Riemannian gradient at $\theta$.
Typically
\footnote{See sec 4.4 \cite{gabay1982minimizing} for Riemannian Newton method} the gradient flow is approximated by the update equation
$\theta(t+1)= \exp_{\theta(t)}(-\gamma_t H(Z_t,\theta))$ where $\exp$ is the Riemannian exponential map, $(\gamma_t)$ is a sequence of step sizes with 
$\sum \gamma^2_t < \infty$, $\sum \gamma_t = +\infty$, 
and $H$ is an unbiased estimator of the loss gradient, $\E[H(Z_t,\theta)] = \nabla_\theta \sd(\Q\| \P_{\theta})$.
 When the Riemannian exponential is computationally expensive, it is convenient to replace it by a retration $\mathcal R$, that is a first-order approximation which stays on the manifold. This leads to the update 
 $\theta(t+1)= \mathcal R_{\theta(t)}(-\gamma_t H(Z_t,\theta))$~\cite{bonnabel2013stochastic}.
 When $\Theta$ is a linear manifold
it is common to take $ \mathcal R_{\theta(t)}(-\gamma_t H(Z_t,\theta))
\defn \theta(t)-\gamma_t H(Z_t,\theta(t))$.
 In local coordinates $(\theta^i)$ we have 
 $\nabla_\theta \sd(\Q\| \P_{\theta}) = g(\theta)^{-1}\dd_{\theta}  \sd(\Q\| \P_{\theta}) $, where $\dd_\theta f$ denotes the tuple $(\partial_{\theta^i}f)$, which we will approximate using the biased estimator 
 $H(\{X^t_i\}_i,\theta) \defn \hat g_{ \theta(t)}(\{X_i^t\}_{i=1}^n)^{-1} \dd_{\theta} \hvf \sd(\{X_i^t\}_{i=1}^n \| \P_\theta)$, 
 where $\hat g_{ \theta(t)}(\{X_i^t\}_{i=1}^n)$ is an unbiased estimator for the information matrix $g(\theta(t))$ using a sample $\{X^t_i \}_{i=1}^n\sim \Q$. 
 We thus obtain the following Riemannian gradient descent algorithm 
 \begin{talign}
 \theta(t+1)=\theta(t)- \gamma_t\hat g_{ \theta(t)}(\{X_i^t\}_{i=1}^n)^{-1} \dd_{\theta(t)} \hvf \sd(\{X_i^t\}_{i=1}^n \| \P_\theta).
 \end{talign}
When $\Theta = \R^m$, $\gamma_t = \frac{1}{t} $, $g$ is the Fisher metric and $ \hvf \sd(\{X_i^t\}_{i=1}^n \| \P_\theta)$ is replaced by  $ \hvf{ \operatorname{KL}}  ( \{X_i^t\}_{i=1}^n \|\mathbb{P}_{\theta})$
this recovers the natural gradient descent algorithm \citep{Amari1998}.


 \section{Derivation of Diffusion Stein Discrepancies} \label{appendix:derivations}

 In this appendix, we carefully derive the diffusion SD studied in this paper. We begin by providing details on the diffusion Stein operator, then move on to the DKSD and DSM divergences and corresponding estimators.

For any matrix kernel
 we will show in  \cref{gen-Stein-operator} that $\forall f\in \mathcal H^d$: $\mathcal S^m_p[f](x) = \langle \mathcal S_p^{m,1}K_x,f\rangle_{\mathcal H^d}$.
In \cref{Stein-divergence} we prove that if $x\mapsto \|\mathcal S^{m,1}_p K_x \|_{\mathcal H^d} \in L^1(\mathbb Q)$, 
then
\begin{talign}
\dksd_{K,m}(\mathbb{Q}\|\P)^2  &\;  \defn \; \sup_{\substack{h \in \mathcal{H}^d\\ \lVert h \rVert \leq 1}}\left | \int_{\X} \mathcal{S}^m_p[h]\dd\mathbb{Q}\right |^2 = \int_{\X} \int_{\X} \mathcal{S}^{m,2}_p\mathcal{S}^{m,1}_p K(x, y)\dd\mathbb{Q}(x)\dd\mathbb{Q}(y).
\end{talign}

In \cref{Stein-kernel} we further show the Stein kernel satisfies
\begin{talign} 
\stein(x,y) \defn \mathcal{S}^{m,2}_p\mathcal{S}^{m,1}_p K(x, y)=\frac 1 {p(y)p(x)}  \nabla_y \cdot \nabla_x \cdot  \left( p(x)m(x)K(x,y) m(y)^{\top}p(y)\right).
\end{talign} 

\subsection{Stein Operator}\label{gen-Stein-operator}
By definition for $f\in \field{\X, \R^d}$ and $A \in \field{\X, \R^{d\times d}}$ 
\begin{talign}
\mathcal S_p[f] &=\frac 1 p \nabla \cdot (pmf)=m^{\top} \nabla \log p \cdot f + \nabla \cdot (mf), \\
\mathcal S_p[A] &=\frac 1 p \nabla \cdot (pmA)= m^{\top} \nabla \log p \cdot A  + \nabla \cdot (mA) 
\end{talign}
which are operators $\field{\X, \R^d} \to \field{\X, \R}$ and $\field{\X, \R^{d\times d}} \to \field{\X, \R^d}$ respectively.

\begin{proposition}
Let $\mathcal X$ be an open (connected) subset of $\R^d$, $m$ is continuously differentiable, and 
$K: \mathcal X \times \mathcal X \to \R^{d \times d}$ is the matrix kernel of $\mathcal H^d$. Suppose for any $j \in [1,d]$, $K, \partial_{1^j} \partial_{2^j}K$ are separately continuous and locally bounded. 
Then for any $f \in \mathcal H^d$ 
\begin{align}
\mathcal S_p[f](x) = \langle \mathcal S_p^1[K]|_x,f\rangle_{\mathcal H^d}
\end{align}
\end{proposition}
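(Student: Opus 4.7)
The plan is to identify $\mathcal{S}_p^1[K]|_x$ as a concrete element of $\mathcal{H}^d$ assembled from reproducing-kernel sections and their derivatives, and then verify the identity by the reproducing property together with the product rule.

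The first and most delicate step is a derivative-reproducing identity: for each $j\in\{1,\dots,d\}$, $v\in\R^d$ and $x\in\X$, the function $\partial_{2^j}K(\cdot,x)v$ lies in $\mathcal{H}^d$ and
\begin{align*}
\langle \partial_{2^j}K(\cdot,x)v,\,f\rangle_{\mathcal{H}^d} \;=\; v^\top \partial_j f(x)\quad\text{for all } f\in\mathcal{H}^d.
\end{align*}
This follows from the standard difference-quotient argument: set $g_h \defn [K(\cdot,x+he_j)v - K(\cdot,x)v]/h$ and use the kernel identity $\langle K(\cdot,a)u,\,K(\cdot,b)v\rangle_{\mathcal{H}^d} = u^\top K(a,b)v$ to express $\|g_h - g_{h'}\|_{\mathcal{H}^d}^2$ as a combination of values of $K$ near $(x,x)$; two applications of the mean value theorem reduce this to integrals of $\partial_{1^j}\partial_{2^j}K$ over a compact neighbourhood of $(x,x)$. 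The hypotheses that $\partial_{1^j}\partial_{2^j}K$ is locally bounded and separately continuous give Cauchyness of $(g_h)$ as $h\to 0$, and the limit is pinned down as $\partial_{2^j}K(\cdot,x)v$ by testing against kernel sections. Passing to the limit in $\langle g_h,\,f\rangle_{\mathcal{H}^d} = v^\top[f(x+he_j)-f(x)]/h$ then yields the claimed identity.

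Next I would expand
\begin{align*}
\mathcal{S}_p^1[K](x,y) \;=\; s_p(x)^\top K(x,y) \;+\; \nabla_x\cdot(m(x)K(x,y))
\end{align*}
by the product rule and convert first-argument derivatives at $(x,y)$ into second-argument derivatives at $(y,x)$ using the kernel symmetry $K(x,y) = K(y,x)^\top$. A short bookkeeping identifies $\mathcal{S}_p^1[K]|_x$ with the $\mathcal{H}^d$ element
\begin{align*}
\phi_x \;\defn\; K(\cdot,x)w(x) \;+\; \sum_{i=1}^d \partial_{2^i}K(\cdot,x)\,\mu_i(x),
\end{align*}
where $w(x) \defn s_p(x) + (\nabla\cdot m(x))^\top$ and $\mu_i(x)$ denotes the $i$-th row of $m(x)$ transposed to a column. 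Each summand lies in $\mathcal{H}^d$ because the coefficients $w(x)$ and $\mu_i(x)$ are constants with respect to the running variable of the kernel section.

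Applying the reproducing identity and its derivative counterpart term by term then gives
\begin{align*}
\langle \phi_x,\,f\rangle_{\mathcal{H}^d} \;=\; w(x)^\top f(x) \;+\; \sum_{i=1}^d \mu_i(x)^\top \partial_i f(x),
\end{align*}
and one last use of the product rule $\nabla\cdot(mf) = (\nabla\cdot m)\cdot f + m_{ij}\partial_i f_j$ shows that this equals $\mathcal{S}_p[f](x) = s_p(x)^\top f(x) + \nabla\cdot(mf)(x)$. The main obstacle is the derivative reproducing identity: the local boundedness and separate continuity of $\partial_{1^j}\partial_{2^j}K$ are exactly what is needed to guarantee that the Riesz representer of $f\mapsto v^\top\partial_j f(x)$ exists in $\mathcal{H}^d$ and coincides with $\partial_{2^j}K(\cdot,x)v$; the remainder of the argument is routine vector calculus.
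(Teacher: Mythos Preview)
Your proposal is correct and follows essentially the same route as the paper: split $\mathcal{S}_p[f]$ into the score term $s_p(x)^\top f(x)$ and the divergence term $\nabla\cdot(mf)(x)$, handle the first by the ordinary reproducing property, and handle the second by the derivative-reproducing identity $\langle \partial_{2^j}K(\cdot,x)v,\,f\rangle_{\mathcal{H}^d}=v^\top\partial_j f(x)$ combined with the product rule. The only notable difference is that the paper obtains this derivative identity by invoking an external result (Theorem~2.11 of Micheli--Glaun\`es on matrix-valued RKHS embedding into $C^1$), whereas you supply a self-contained difference-quotient argument using the local boundedness and separate continuity of $\partial_{1^j}\partial_{2^j}K$; your version is more explicit about why $\mathcal{S}_p^1[K]|_x$ actually lies in $\mathcal{H}^d$, but the logical skeleton is the same.
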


\begin{proof}

Note that technically the kernel $K$ of $\H^d$ takes value in the set of (bounded) linear operators on $\R^d$, and we view these linear operators as matrices by defining the components
 $\left( K(x,y)\right)_{ji} \defn e_j \cdot K(x,y)e_i$,
where $(e_l)$ is the canonical basis of $\R^d$.
For any $f \in \mathcal H^d$
\begin{align}
 \langle f(x), m(x)^{\top}\nabla \log p(x) \rangle_{\R^d}
 & = \langle f, K(\cdot,x)m(x)^{\top}\nabla \log p(x) \rangle_{\mathcal H^d} \\
&= \langle f, K_x^{\top}m(x)^{\top}\nabla \log p(x) \rangle_{\mathcal H^d}\\
&= \langle f, m(x)^{\top}\nabla \log p(x)\cdot K_x \rangle_{\mathcal H^d} .
\end{align}
Moreover, under these assumptions the RKHS $\mathcal H^d$ is continuously embedded in the topological space $C^1(\mathcal X,\mathbb R^d)$, so its elements are continuously differentiable.
Then for any $f\in \mathcal H^d$, by theorem 2.11~\cite{micheli2013matrix}
 \begin{talign}
 \langle f, \partial_{2^j}K(\cdot,x) e_r \rangle_{\mathcal H^d}= \langle e_r,  \partial_{j} f|_x \rangle_{\R^d} = \partial_{j} f_r|_x.
 \end{talign}
 Hence
\begin{talign}
    \langle f, \nabla \cdot (mK)|_x \rangle_{\mathcal H^d} &= \langle f, \partial_{1^j}(m_{jr}K_{ri})|_x e_i \rangle_{\mathcal H^d}  =
    \langle f, \partial_{j}m_{jr}|_x K_{ri}(x,\cdot)e_i+ m_{jr}(x)\partial_{1^j}K_{ri}|_x e_i \rangle_{\mathcal H^d} \\
    &= 
    \partial_{j}m_{jr}|_x \langle f,  K_{ir}(\cdot,x)e_i \rangle_{\mathcal H^d} + m_{jr}(x)\langle f, \partial_{1^j}K_{ri}(x,\cdot) e_i \rangle_{\mathcal H^d} \\
    & = 
     \partial_{j}m_{jr}|_x \langle f,  K(\cdot,x)e_r \rangle_{\mathcal H^d} + m_{jr}(x)\langle f, \partial_{2^j}K_{ir}(\cdot,x) e_i \rangle_{\mathcal H^d}\\
     & = 
     \partial_{j}m_{jr}|_x \langle f,  K(\cdot,x)e_r \rangle_{\mathcal H^d} + m_{jr}(x)\langle f, \partial_{2^j}K(\cdot,x) e_r \rangle_{\mathcal H^d}\\
     &=
      \partial_{j}m_{jr}|_x f_r(x)+ m_{jr}(x) \partial_{j}f_r|_x \\
      &=
      \langle \nabla \cdot m, f(x) \rangle_{\R^d}+ \tr[m(x)\nabla_x f ] \\
      &= \nabla_x \cdot (mf).
\end{talign}

Therefore, we conclude that $\mathcal S_p[f](x) = \langle \steinop_p^1K_x,f\rangle_{\mathcal H^d}$ where $\steinop_p^1K_x \defn \steinop_p^1[K]|_x$ means applying $\steinop_p$ to the first entry of $K$ and evaluate it $x$, so informally
$\steinop_p^1[K]|_x : y \mapsto \frac1 p \nabla_x \cdot\left(p(x)m(x)K(x,y)\right)$.
\end{proof}

\subsection{Diffusion Kernel Stein Discrepancies}\label{Stein-divergence}

\begin{proposition}
Suppose $\mathcal S_p[f](x) = \langle \steinop_p^1[K]|_x,f\rangle_{\mathcal H^d}$ for any $f \in \mathcal H^d$. 
Let  $m$ and $K$ be $C^2$, and $x \mapsto \steinop_p K_x$ be $\Q $-Bochner integrable.
Then 
\begin{talign}
 \dksd_{K,m}(\mathbb{Q},\P)^2 =  \int_{\X} \int_{\X} \steinop^2_p\steinop^1_p K(x, y)\dd\mathbb{Q}(x)\dd\mathbb{Q}(y).
\end{talign}
\end{proposition}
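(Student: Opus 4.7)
My plan is to exploit the Riesz-type identity in the hypothesis, $\mathcal S_p[f](x) = \langle \steinop_p^1 K_x, f\rangle_{\mathcal H^d}$, to reduce the supremum in the definition of $\dksd$ to the norm of a single element of $\mathcal H^d$. Bochner integrability of $x \mapsto \steinop_p^1 K_x$ lets me define the \emph{Stein mean embedding}
\begin{talign*}
\mu_{\Q} \defn \int_{\X} \steinop_p^1 K_x \,\dd \Q(x) \;\in\; \mathcal H^d,
\end{talign*}
and commute the inner product with the integral to get, for every $f \in \mathcal H^d$,
\begin{talign*}
\int_{\X} \mathcal S_p[f](x)\,\dd\Q(x) \;=\; \int_{\X}\langle \steinop_p^1 K_x, f\rangle_{\mathcal H^d}\,\dd\Q(x) \;=\; \langle \mu_\Q, f\rangle_{\mathcal H^d}.
\end{talign*}

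Next, I would apply the standard Hilbert-space identity $\sup_{\|f\|\le 1}|\langle \mu_\Q, f\rangle_{\mathcal H^d}| = \|\mu_\Q\|_{\mathcal H^d}$ (attained at $f = \mu_\Q/\|\mu_\Q\|$ when $\mu_\Q\neq 0$) to deduce $\dksd_{K,m}(\Q\|\P)^2 = \|\mu_\Q\|_{\mathcal H^d}^2$. Then I would unfold the squared norm as an inner product and again use Bochner integrability (twice) to move both integrals outside:
\begin{talign*}
\|\mu_\Q\|_{\mathcal H^d}^2 \;=\; \langle \mu_\Q, \mu_\Q\rangle_{\mathcal H^d} \;=\; \int_{\X}\int_{\X}\bigl\langle \steinop_p^1 K_x,\, \steinop_p^1 K_y\bigr\rangle_{\mathcal H^d}\,\dd\Q(x)\,\dd\Q(y).
\end{talign*}

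Finally, I need to identify the integrand with $\steinop_p^{m,2}\steinop_p^{m,1}K(x,y)$. The key observation is that $\steinop_p^1 K_y \in \mathcal H^d$, so we may use the Riesz-type identity again with test function $f = \steinop_p^1 K_y$: this yields $\mathcal S_p[\steinop_p^1 K_y](x) = \langle \steinop_p^1 K_x, \steinop_p^1 K_y\rangle_{\mathcal H^d}$, which by symmetry and the definition of the two-sided action gives exactly $\steinop_p^{2}\steinop_p^{1}K(x,y)$.

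The main obstacle is the last step: I need $\steinop_p^1 K_y$ to lie in $\mathcal H^d$ and to be sufficiently regular for $\mathcal S_p$ to act on it, which is what the $C^2$ assumptions on $m$ and $K$ buy us (via the standard derivative-reproducing property that ensures the RKHS is continuously embedded in $C^1$, as noted in the preceding proposition of Appendix B.1). The remaining details — Fubini/Bochner justifications for swapping $\int \dd\Q$ with $\langle\cdot,\cdot\rangle$ — are routine consequences of the integrability hypothesis $\int \|\steinop_p^1 K_x\|_{\mathcal H^d}\dd\Q(x) < \infty$.
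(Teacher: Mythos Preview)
Your argument is correct and follows essentially the same strategy as the paper: use the reproducing identity to rewrite $\int_\X \mathcal S_p[f]\,\dd\Q$ as an inner product against the Bochner integral $\int_\X \steinop_p^1 K_x\,\dd\Q(x)$, take the supremum over the unit ball to obtain $\|\int_\X \steinop_p^1 K_x\,\dd\Q(x)\|_{\mathcal H^d}^2$, and then expand as a double integral of $\langle \steinop_p^1 K_x,\steinop_p^1 K_y\rangle_{\mathcal H^d}=\steinop_p^2\steinop_p^1 K(x,y)$. The only difference is cosmetic: the paper routes the computation through the tensor product $\mathcal H^d\otimes\mathcal H^d$ with the Hilbert--Schmidt inner product before reducing to the same squared norm, whereas you go there directly via the mean embedding $\mu_\Q$ and Riesz duality; your route is shorter and equally rigorous.
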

\begin{proof}

Let us identify $\mathcal H_1 \otimes \mathcal H_2 \;\cong\; L(\mathcal H_1 \times \mathcal H_2, \mathbb R) \; \cong \; L( \mathcal H_2, \mathcal H_1)$
 with $(v_1 \otimes v_2) \sim v_1 \langle v_2 ,\cdot \rangle_{\mathcal H_2}$ (since $\mathcal H_2 \;\cong \; \mathcal H_2^{\ast}$), 
  so that  $(v_1 \otimes v_2)u_2  \; \defn  \;v_1 \langle v_2,u_2 \rangle_{\mathcal H_2}$ 
  (here $L(V,W)$ is the space of linear maps from $V$ to $W$). Then 
\begin{talign}
  \metric{u_1 \otimes u_2}{v_1 \otimes v_2}_{HS} &\defn \langle u_1,v_1 \rangle_{\mathcal H_1}\langle u_2,v_2 \rangle_{\mathcal H_2} \; =\;  \left \langle u_1, (v_1\otimes u_2)v_2 \right \rangle_{\mathcal H_1}.
\end{talign}   
 For simplicity we will  write $\steinop_p K_x \defn  \steinop_p^1[K]|_x$.
  Using the fact $x \mapsto \steinop_p K_x$ is $\Q $-Bochner integrable, 
  then by Cauchy-Schwartz $x \mapsto  \langle h, \steinop_p K_x \rangle_{\mathcal H^d}$ is $\Q$-integrable. Then
\begin{talign}
	\dksd_{K,m}(\mathbb{Q},\P)^2 
  & =  \sup_{\substack{h \in \mathcal{H}^d\\ \lVert h \rVert \leq 1}}\left\langle \int_{\X} \steinop_p[h](x)\dd\mathbb{Q}(x), \int_{\X} \steinop_p[h](y)\dd\mathbb{Q}(y)\right\rangle_\R\\
		&  =  \sup_{\substack{h \in \mathcal{H}^d\\ \lVert h \rVert \leq 1}} \int_{\X} \langle h, \steinop_p K_x \rangle_{\mathcal H^d} \dd\mathbb{Q}(x) \int_{\X} \langle h, \steinop_p K_y \rangle_{\mathcal H^d}\dd\mathbb{Q}(y)\\
		&  =  \sup_{\substack{h \in \mathcal{H}^d\\ \lVert h \rVert \leq 1}}  \int_{\X}  \int_{\X} \left\langle h, \steinop_p K_x \right\rangle_{\mathcal H^d}\langle h, \steinop_p K_y \rangle_{\mathcal H^d}  \dd\mathbb{Q}(x)\dd\mathbb{Q}(y)\\
		& = \sup_{\substack{h \in \mathcal{H}^d\\ \lVert h \rVert \leq 1}}  \int_{\X}  \int_{\X} \left\langle h, \steinop_p K_x\otimes \steinop_p K_y h \right\rangle_{\mathcal H^d}  \dd\mathbb{Q}(x)\dd\mathbb{Q}(y)\\
		&  = \sup_{\substack{h \in \mathcal{H}^d\\ \lVert h \rVert \leq 1}}  \int_{\X} \int_{\X} \left\langle h \otimes h, \steinop_p K_x\otimes \steinop_p K_y  \right\rangle_{HS}  \dd\mathbb{Q}(x)\dd\mathbb{Q}(y) \\
		\end{talign}
Moreover $\int_{\X} \|  \steinop_p K_x\otimes \steinop_p K_y  \|_{HS} \dd\Q( x) \dd\Q( y ) < \infty$,
since 
\begin{talign}
& \int_{\X}  \|  \steinop_p K_x\otimes \steinop_p K_y  \|_{HS} \dd \Q( x)\otimes \dd \Q( y )\\
 &= \int_{\X} \int_{\X}  \sqrt{\metric{\steinop_p K_x}{\steinop_p K_x}_{\mathcal{H}^d} \metric{\steinop_p K_y}{\steinop_p K_y}_{\mathcal{H}^d}}\dd \Q(x) \dd \Q (y)\\
&= \left ( \int_{\X}  \sqrt{\metric{\steinop_p K_x}{\steinop_p K_x}_{\mathcal{H}^d} }\dd \Q(x)  
\right )^2 \\
& = \left ( \int_{\X}  \| \steinop_p K_x \|_{\mathcal{H}^d} \dd \Q(x)  
\right )^2 < \infty
\end{talign}
since by assumption $x \mapsto \steinop_p K_x$ is $\Q $-Bochner integrable. Thus
\begin{talign}
	\dksd_{K,m}(\mathbb{Q},\P)^2	&\;  = \; \sup_{\substack{h \in \mathcal{H}^d\\ \lVert h \rVert \leq 1}} \left\langle h \otimes h,  \int_{\X}  \int_{\X}  \steinop_p K_x\otimes \steinop_p K_y   \dd\mathbb{Q}(x)\dd\mathbb{Q}(y)  \right\rangle_{HS}\\
		& =   \left\| \int_{\X}  \int_{\X}  \steinop_p K_x\otimes \steinop_p K_y   \dd\mathbb{Q}(x)\dd\mathbb{Q}(y)  \right\|_{HS} \\
		& =  
		 \left\| \int_{\X} \steinop_p K_x \dd\mathbb{Q}(x)\otimes \int_{\X} \steinop_p K_y   \dd\mathbb{Q}(y)  \right\|_{HS}\\
		 & =  \left\| \int_{\X}    \steinop_p K_x \dd\mathbb{Q}(x) \right\|^2_{\mathcal H^d} \\
		 & =   \left\langle \int_{\X}    \steinop_p K_x \dd\mathbb{Q}(x) , \int_{\X}    \steinop_p K_y \mathbb{Q}(\dd y) \right\rangle_{\mathcal H^d} 
		  \\
		 & =  \int_{\X}   \int_{\X}    \left\langle  \steinop_p K_x  ,     \steinop_p K_y  \right\rangle_{\mathcal H^d} \dd\mathbb{Q}(x) \dd\mathbb{Q}(y)
		 \\
		&  =  \int_{\X} \int_{\X}\steinop^2_p\steinop^1_p K(x, y)\dd\mathbb{Q}(x)\dd\mathbb{Q}(y).
\end{talign}
To show the penultimate equality (exchange integral and inner product),
we use the fact $  \steinop_p K_x$ is $\Q$-Bochner integrable, 
and that the operator $W: f \mapsto \langle f, \int_{\X} \steinop_p K_y \mathbb{Q}(\dd y) \rangle_{\mathcal H^d}$ is bounded, from which it follows that
\begin{talign}
\left\langle \int_{\X}  \steinop_p K_x \dd\mathbb{Q}(x) , \int_{\X} \steinop_p K_y \mathbb{Q}(\dd y) \right \rangle_{\mathcal H^d} 
& = W \left[ \int_{\X} \steinop_p K_x \dd\mathbb{Q}(x) \right] = 
\int_{\X} W \left[ \steinop_p K_x \dd\mathbb{Q}(x) \right] \\
&=\int_{\X} \left \langle  \steinop_p K_x  ,   \int_{\X} \steinop_p K_y \dd\mathbb{Q}(y) \right \rangle_{\mathcal H^d} \dd\mathbb{Q}(x)  \\
&= \int_{\X} \int_{\X}  \left\langle \steinop_p K_x  , \steinop_p K_y\right \rangle_{\mathcal H^d} \dd\mathbb{Q}(x) \dd\mathbb{Q}(y)
\end{talign}
Hence $\dksd_{K,m}(\mathbb{Q},\P)^2 =  \int_{\X} \int_{\X} \steinop^2_p\steinop^1_p K(x, y)\dd\mathbb{Q}(x)\dd\mathbb{Q}(y)$.

Note that from this proof we have 
\begin{talign}
\stein (x,y) \defn \steinop^2_p\steinop^1_p K(x, y)=
 \left\langle \steinop_p K_x  , \steinop_p K_y\right \rangle_{\mathcal H^d},
\end{talign}
which shows the map 
$\phi: \X \to \H^d$, $\phi(x) \defn   \steinop_p^1[K]|_x$ is a feature map (more precisely it is dual to the feature map)
 for the scalar reproducing kernel $\stein$, and its RKHS consists of functions $g(\cdot) = \langle \phi(\cdot) , f \rangle_{\H^d}$ for $f \in \H^d$ \citep{micchelli2005learning}.
\end{proof}

\subsection{The Stein Kernel Corresponding to the Diffusion Kernel Stein Discrepancy}
\label{Stein-kernel}

Note the Stein kernel satisfies 
\begin{talign}
\stein =\frac 1 {p(y)p(x)}  \nabla_y \cdot \nabla_x \cdot  \left( p(x)m(x)K m(y)^{\top}p(y)\right)
\end{talign}
since
\begin{talign}
 k^0&=\mathcal{S}^2_p\mathcal{S}^1_p K(x, y) = \frac 1 {p(y)p(x)} \nabla_y \cdot \left(p(y)m(y) \nabla_x \cdot \left(p(x)m(x)K \right) \right)\\
 &= \frac 1 {p(y)p(x)} \nabla_y \cdot \left(p(y)m(y) \partial_{x^i}\left(p(x)m(x)_{ir}K_{rs} \right)e_s \right) \\
 &=  \frac 1 {p(y)p(x)} \nabla_y \cdot \left(p(y)m(y)_{ls} \partial_{x^i}\left(p(x)m(x)_{ir}K_{rs} \right)e_l \right) \\
 &= \frac 1 {p(y)p(x)} \partial_{y^l}  \left(p(y)m(y)_{ls} \partial_{x^i}\left(p(x)m(x)_{ir}K_{rs} \right)\right)
 \\
 &= \frac 1 {p(y)p(x)} \partial_{y^l} \partial_{x^i} \left( p(x)m(x)_{ir}K_{rs} m(y)_{sl}^{\top}p(y)\right) \\
 &= \frac 1 {p(y)p(x)}  \nabla_y \cdot \nabla_x \cdot  \left( p(x)m(x)K m(y)^{\top}p(y)\right).
\end{talign}

Note it is also possible to view $m(x)K m(y)^{\top}$ as a new matrix kernel.
That is the matrix field $m$ defines a new kernel $K_m: (x,y) \mapsto m(x)K(x,y)m^{\top}(y)$, since 
$K_m(y,x)^{\top}= m(x)K(y,x)m(y)^{\top}=K_m(x,y)$
and for any $v_j \in \R^d,x_i \in \X$,
\begin{talign}
v_j \cdot K_m(x_j,x_l) v_l = v_j \cdot m(x_j)K(x_j,x_l)m(x_l)^{\top}v_l
= \left(m(x_j)^{\top}v_j \right)\cdot K(x_j,x_l) \left(m(x_l)^{\top}v_l \right)  \geq 0
\end{talign} 
We can expand the Stein kernel using the following expressions:
\begin{talign}
 &\nabla_y \cdot \left(p(y)m(y) \nabla_x \cdot \left(p(x)m(x)K \right) \right)\\
 &=\nabla_y \cdot \left(p(y)m(y) \left(Km(x)^{\top}\nabla_x p +p(x)\nabla_x\cdot(m(x)K) \right) \right).
 \end{talign}
 \begin{talign}
 &\nabla_y \cdot \left(p(y)m(y) Km(x)^{\top}\nabla_x p \right)\\
 &=m^{\top}(x) \nabla_x p \cdot Km(y)^{\top} \nabla_y p+p(y)\nabla_y\cdot\left( m(y) Km(x)^{\top}\nabla_x p\right)\\
 &=m^{\top}(x) \nabla_x p \cdot Km(y)^{\top} \nabla_y p+p(y)\nabla_y\cdot\left( m(y) K\right) \cdot m(x)^{\top}\nabla_x p,
 \end{talign}
\begin{talign}
 &\nabla_y \cdot \left( p(y)m(y)  p(x)\nabla_x\cdot\left(m(x)K\right) \right)\\
 &=p(x)\left( \nabla_y \cdot \left( p(y)m(y) \right) \cdot \nabla_x\cdot \left(m(x)K\right)+p(y) \tr\left[m(y) \nabla_y \nabla_x\cdot \left(m(x)K\right) \right] \right)\\
 &=p(x)p(y)\tr\left[m(y) \nabla_y \nabla_x\cdot(m(x)K) \right] \\
 &\; +p(x)\nabla_x\cdot \left(m(x)K\right)\cdot \left(m(y)^{\top} \nabla_y p+p(y) \nabla_y \cdot m \right).
\end{talign}
Hence
\begin{talign}
k^0 
&= m^{\top}(x) \nabla_x \log p \cdot Km(y)^{\top} \nabla_y \log p & \\
&+ \nabla_y\cdot\left( m(y) K\right) \cdot m(x)^{\top}\nabla_x \log p+
\nabla_x\cdot \left(m(x)K\right)\cdot m(y)^{\top} \nabla_y \log p\\
&+\nabla_x\cdot \left(m(x)K\right)\cdot  \nabla_y \cdot m +\tr\left[m(y) \nabla_y \nabla_x\cdot(m(x)K) \right] \\
&= 
\metric{s_p(x)}{ Ks_p(y)} + \metric{\nabla_y\cdot\left( m(y) K\right)}{ s_p(x)}+
\metric{\nabla_x\cdot \left(m(x)K\right)}{ s_p(y)}\\
&+\metric{\nabla_x\cdot \left(m(x)K\right)}{  \nabla_y \cdot m} +\tr\left[m(y) \nabla_y \nabla_x\cdot(m(x)K) \right] 
\end{talign}

\subsection{Special Cases of Diffusion Kernel Stein Discrepancy}\label{app:special-cases-dksd}
Consider
\begin{talign}
\stein =\frac{1}{p(y)p(x)}  \nabla_y \cdot \nabla_x \cdot  \left( p(x)m(x)K(x,y) m(y)^{\top}p(y)\right)
\end{talign}
and decompose $m(x)K(x,y) m(y)^{\top} \defn gA$ where $g$ is scalar and $A$ is matrix-valued.
Then we
\begin{talign}
\stein 
& = g \metric{\nabla_y \log p}{ A \nabla_x \log p} + \metric{\nabla_y \log p}{ A \nabla_x g}+\metric{\nabla_y g}{ A \nabla_x \log p} &\\
& +  \Tr[A \nabla_x \nabla_y g]+ g \nabla_y \cdot \nabla_x \cdot A+ \metric{\nabla_x \cdot A}{\nabla_y g} + \metric{\nabla_y \cdot A^{\top}}{\nabla_x g} \\
& +g \metric{\nabla_y \cdot A^{\top}}{\nabla_x \log p}  +g \metric{\nabla_x \cdot A}{\nabla_y \log p}.
\end{talign}
For the case, $K =\text{diag}(k^1,\ldots,k^d)$, setting $\mathcal T^x_i \defn   \frac{1}{p(x)} \partial_{x^i}\left(p(x) \cdot \right)$ then
\begin{talign}
\mathcal S^2_p \mathcal S^1_p[\text{diag}(k^1,\ldots,k^d)] = \mathcal T^y_l \left( m_{li}(y) \mathcal T^x_c \left(k^i(x,y) m^{\top}_{ic}(x) \right)\right)= \mathcal T^y_l \mathcal T^x_c \left( m_{li}(y) k^i(x,y) m_{ci}(x) \right).
\end{talign}
If $K=Ik$ in components
\begin{talign}
 \mathcal S^2_p \mathcal S^1_p[Ik]
 &= (s_p(x))_i k(x,y)(s_p(y))_i 
 +\partial_{y^i}(m_{ir}k)(s_p(x))_r
 +\partial_{x^i}(m(x)_{ir}k) (s_p(y))_r \\
 &+ \partial_{x^i}(m(x)_{ir}k) \partial_{y^l}(m_{lr})+ m(y)_{ir}\partial_{y^i} \partial_{x^s}(m(x)_{sr}k)
\end{talign}
When $p=p_\theta$ we are often interested in the gradient $\nabla_\theta k^0_\theta$. Note
 $\nabla_y\cdot\left( m(y) K\right)= k\nabla_y \cdot m +\nabla_y k \cdot m(y)$, 
so \footnote{More generally 
 $\nabla_y\cdot\left( m(y) K\right)= (\nabla_y \cdot m) \cdot K +\Tr[\nabla_yK \otimes m(y)]$ where $\Tr[\nabla_yK \otimes m]_r=\partial_{y^i}K_{jr}m_{ij}$
 and if $K=Bk$
 \begin{talign}
\partial_{\theta^i} \left[ (\nabla_y \cdot m) \cdot K s_p(x) \right ] &= k B_{sr} \partial_{\theta^i} \left((\nabla_y \cdot m)_s (s_p(x))_r\right)= k \Tr[B \partial_{\theta^i} (s_p(x) \otimes \nabla_y \cdot m )] \\
\partial_{\theta^i}\left[\nabla_y k ^{\top}m(y)Bs_p(x)\right] &=
 \partial_{y^s}k B_{jr}\partial_{\theta^i} [m_{sj}(y) (s_p(x))_r]
\end{talign}
}
\begin{talign}
\partial_{\theta^i} \left[ k\metric{\nabla_y \cdot m}{ s_p(x)} \right ]
 &= k  \partial_{\theta^i} \metric{ \nabla_y \cdot m }{s_p(x)} 
\\
\partial_{\theta^i}\left[\metric{\nabla_y k \cdot m(y)}{s_p(x)}\right]
 &=
\metric{\nabla_y k}{\partial_{\theta^i}[m(y)s_p(x)]} \\
\tr\left[m(y) \nabla_y \nabla_x\cdot(m(x)K) \right]
&= \nabla_y k^{\top}m(y) \nabla_x \cdot m+\Tr[m(y)m(x)^{\top}\nabla_y \nabla_x k]
\end{talign}
and the terms in  $\partial_{\theta^i} \stein$ reduce to
\begin{talign}
\partial_{\theta^i}\metric{s_p(x)}{ Ks_p(y)}&= k\partial_{\theta^i}\metric{s_p(x)}{ s_p(y)} 
\\
\partial_{\theta^i}\metric{\nabla_y\cdot \left(m(y)K\right)}{ s_p(x)}& =
 k  \partial_{\theta^i} \metric{ \nabla_y \cdot m }{s_p(x)} + \metric{\nabla_y k}{\partial_{\theta^i}[m(y)s_p(x)]}
\\
\partial_{\theta^i}\metric{\nabla_x\cdot \left(m(x)K\right)}{ s_p(y)}& =
 k  \partial_{\theta^i} \metric{ \nabla_x \cdot m }{s_p(y)} + \metric{\nabla_x k}{\partial_{\theta^i}[m(x)s_p(y)]}
\\
\partial_{\theta^i}\metric{\nabla_x\cdot \left(m(x)K\right)}{  \nabla_y \cdot m} &= 
k \partial_{\theta^i}\metric{\nabla_x \cdot m}{\nabla_y \cdot m} +\partial_{\theta^i}\metric{\nabla_x k \cdot m(x)}{\nabla_y \cdot m}.
\end{talign}
When $K=kI$ and  we further have a diagonal matrix $m= \text{diag}(f_i)$, $m(y)m(x)^{\top}= \text{diag}(f_i(y) f_i(x))$. 
If $u \odot v$ denotes the vector given by the pointwise product of vectors, i.e., $(u \odot v)_i = u_iv_i$, and $f$ is the vector, then $m(x) \nabla_x \log p =f(x) \odot \nabla_x \log p$
and 
$\left(\nabla_y \cdot m \right)_i= \partial_{y^i} f_i$, 
$\left(\nabla_x \cdot (mk) \right)_i= \partial_{x^i} (f_ik)$, 
\begin{talign}
 s_p(x) \cdot Ks_p(y) &= k(x,y)f_i(x) \partial_{x^i} \log p f_i(y) \partial_{y^i} \log p \\
\nabla_y\cdot\left( m(y) K\right) \cdot s_p(x)&=  \partial_{y^i} (f_i(y)k) f_i(x) \partial_{x^i} \log p \\
\nabla_x\cdot \left(m(x)K\right)\cdot  \nabla_y \cdot m &= \partial_{x^i} (f_i(x)k) \partial_{y^i} (f_i(y))\\
\Tr \left [m(y) \nabla_y  \nabla_x \cdot(m k) \right] &= f_i(y) \partial_{x^i} \left( f_i(x) \partial_{y^i} k \right)
\end{talign}
and if $m \mapsto m I$ (is scalar), (this is just KSD with $k(x,y) \mapsto m(x)k(x,y)m(y)$):
\begin{talign}
k^0 &= m(x) m(y) k(x,y) \nabla_x \log p \cdot \nabla_y \log p \\
& + m(x)\nabla_y\left( m(y) k\right) \cdot \nabla_x \log p+
m(y)\nabla_x \left( m(x) k \right)\cdot \nabla_y \log p\\
&+\nabla_x \left( m(x) k \right)\cdot  \nabla_y  m +m(y)  \nabla_x \cdot (m(x) \nabla_y k),
\end{talign}
When $m=I$, we recover the usual definition of kernel-Stein discrepancy (KSD):
\begin{talign}
\ksd\left(\Q\|\P\right) ^2
=
\int_{\X} \int_{\X}  \frac{1}{p(y)p(x)}  \nabla_y \cdot \nabla_x   \left( p(x)k(x,y)p(y)\right)\mathrm{d}\Q(x) \mathrm{d}\Q(y).
\end{talign}

\subsection{Diffusion Kernel Stein Discrepancies as Statistical Divergences} \label{Derivations-DKSD}

In this section, we prove that DKSD is a statistical divergence and provide sufficent conditions on the matrix-valued kernel.
\subsubsection{Proof of {\cref{DKSD-divergence}}: DKSD as statistical divergence}
By Stoke's theorem $\int_\X \St_q[v] \dd \Q= \int_\X \nabla \cdot(qmv) \dd x =0$, thus $\int_\X \St_p[v] \dd \Q =\int_\X (\St_p[v] -\St_q[v]) \dd \Q 
= \int_\X (s_p-s_q)\cdot v \dd \Q$, 
and by assumption $\int_\X \St_q[K] \dd \Q= \int_\X \nabla \cdot(qmK) \dd x=0$.
Moreover, with $s_p = m^{\top} \nabla \log p$, and $\delta_{p,q} \defn s_p -s_q$. Hence
\begin{talign}
\dksd_{K,m}(\Q,\P)^2 &= \int_\X \int_\X \mathcal{S}^2_p\left[\mathcal{S}^1_p K(x, y)\right]\dd\mathbb{Q}(y)\dd\mathbb{Q}(x) \\
& =  \int_\X \int_\X (s_p(y)-s_p(y)) \cdot\left[\mathcal{S}^1_p K(x, y)\right]\dd\mathbb{Q}(y)\dd\mathbb{Q}(x)\\
 &= \int_\X  (s_p(y)-s_p(y)) \dd\mathbb{Q}(y) \cdot\int_\X\left[\mathcal{S}^1_p K(x, y)\right]\dd\mathbb{Q}(x)\\
 &=\int_\X  (s_p(y)-s_p(y)) \dd\mathbb{Q}(y) \cdot\int_\X\left[\mathcal{S}^1_p K(x, y)-\St_q^1K(x,y)\right]\dd\mathbb{Q}(x) \\
 &=
 \int_\X  (s_p(y)-s_p(y)) \dd\mathbb{Q}(y)\cdot \int_\X \left[(s_p(x)-s_p(x))\cdot K(x,y)\right]\dd\mathbb{Q}(x) \\
 &= \int_\X \int_\X q(x)\delta_{p,q}(x)^{\top}K(x,y) \delta_{p,q}(y)q(y)  \dd x \dd y \\
 &=\int_\X  \int_\X \dd \mu^{\top}(x)K(x,y) \dd \mu(y).
\end{talign}
where $\mu(\dd x) \defn q(x)\delta_{p,q}(x) \dd x$, which is a finite measure by assumption.
If $\St(q,p)=0$, then since $K$ is IPD we have $q \delta_{p,q} \equiv 0$, 
and since $q>0$ and $m$ is invertible we must have $\nabla \log p = \nabla \log q$ and thus $q=p$.

\subsubsection{Proof of {\cref{IPD-matrix-kernels}}: IPD matrix kernels}
Let $\mu$ be a finite signed vector measure.
$(i)$ If each $k^i$ is IPD, then $\int \dd\mu^{\top} K \dd\mu = \int k^i(x,y) \dd \mu_i(x) \dd \mu_i(y) \geq 0$ with equality iff $\mu_i \equiv 0$ for all $i$. Conversely suppose $\int k^i(x,y) \dd \mu_i(x) \dd \mu_i(y) \geq 0$ with equality iff $\mu_i \equiv 0$ for all $i$ .
Suppose $k^j$ is not IPD for some $j$, then there exists a finite non-zero signed measure $\nu$ s.t., $\int k^j\dd \nu \otimes \dd \nu \leq 0$, so if we define the vector measure $\mu_i \defn \delta_{ij} \nu$, which is non-zero and finite, then $\int k^i(x,y) \dd \mu_i(x) d\mu_i(y) \leq 0$ which contradicts the assumption.
For $(ii)$, we first diagonalise $B=R^{\top}DR$ where $R$ is orthogonal and $D$ diagonal with positive entries $\lambda_i>0$.
Then \begin{talign}
\int \dd \mu^{\top} K \dd \mu = \int k\dd \mu^{\top}R^{\top}D R\dd \mu = \int k \left( R \dd \mu\right)^{\top}D \left( R \dd \mu\right) = \int k(x,y) \lambda_i \dd \nu_i(x) \dd \nu_i(y),
\end{talign}
where $\nu \defn R\mu$ is finite and non-zero, since $\mu$ is non-zero and $R$ is invertible, thus maps non-zero vectors to non-zero vectors.
Clearly if $k$ is IPD then $\int \dd \mu^{\top} K \dd \mu\geq 0$ with equality iff $\nu_i\equiv 0$ for all $i$. 
Suppose $K$ is IPD but $k$ is not, then there exists finite non-zero signed measure $\nu$ for which $\int k \dd \nu \otimes \dd \nu \leq 0$,
 but then setting $\mu \defn R^{\top}\xi$, with $\xi_i  \defn \delta_{ij} \nu$ which is finite and non-zero, implies 
$\int \dd \mu^{\top} K \dd \mu = \int k \dd \xi^{\top}D \dd \xi= \lambda_j \int k \dd \nu \otimes \dd \nu \leq 0$.

\subsection{Diffusion Score Matching} \label{derivations-DSM}

Another example of SD is the diffusion score matching (DSM) discrepancy, as introduced below:
\subsubsection{Proof of {\cref{prop:SM_is_Stein}}: Diffusion Score Matching}
Note that the Stein operator satisfies
\begin{talign}
\mathcal{S}_{p}[g]
& =
 \frac{  \nabla \cdot \left( p mg \right)}{p} = \frac{ \metric{\nabla p }{ mg} + p \nabla \cdot(mg)}{p}= 
\metric{\nabla \log p }{ mg} + \nabla \cdot (m g) =
\metric{m^{\top}\nabla \log p }{ g} + \nabla \cdot (m g).
\end{talign}
Since $\int_\X \mathcal S_q[g] \dd \Q=0$, we have 
\begin{talign}
D(\Q\| \P) 
& = 
\sup_{g \in \mathcal{G}} \left| \int_{\X} \mathcal{S}_{p}[g](x) \Q(\mathrm{d}x)\right|^2
 = 
\sup_{g \in \mathcal{G}} \left| \int_{\X} (\mathcal{S}_p[g](x) -  \mathcal{S}_{q}[g](x)) \Q(\mathrm{d}x)\right|^2 \\
& =  
\sup_{g \in \mathcal{G}} \left| \int_{\X}\left( \left(\nabla \log p-\nabla \log q \right) \cdot( mg)\right) \dd \Q\right|^2, \\
&= \sup_{g \in \mathcal{G}} \left| \left\langle
m^{\top}\left(\nabla \log p-\nabla \log q \right), g \right\rangle_{L^2(\Q)} 
\right|^2 \\
&= \left\|m^{\top}\left(\nabla \log p-\nabla \log q \right) \right\|_{L^2(\Q)}^2\\
&= \int_{\X} \left\| m^{\top} \left(\nabla \log p - \nabla \log q\right) \right\|_{2}^2 \dd \Q,
\end{talign}
where we have used the fact that $\mathcal{G}$ is dense in the unit ball of $L^2(\Q)$ (since smooth functions with compact support are dense in $L^2(\Q)$), 
and that the supremum over a dense subset 
of the continuous functional $F(\cdot) \defn\left\langle
m^{\top}\left(\nabla \log p-\nabla \log q \right), \cdot \right\rangle_{L^2(\Q)}$ is equal to the supremum over the closure,
$\text{sup}_{\mathcal G}F = \text{sup}_{\overline{ \mathcal G}}F$. 
Suppose $D(\Q\|\P)=0$. Then since $q>0$ we must have 
$\left\| m^{\top} \left(\nabla \log p - \nabla \log q\right) \right\|_{2}^2=0$, i.e., $ m^{\top} \left(\nabla \log p - \nabla \log q\right) =0$, i.e.,
$ \nabla (\log p- \log q)=0$. Thus  $\log (p/q) =c$, so $p = qe^c$ and integrating implies $c=0$, so $D(\Q\|\P)=0$ iff $\Q=\P$ a.e..

To obtain the estimator we will use the divergence theorem, which holds for example if $X, \nabla \cdot X \in L^1(\R^d)$ for $X=q  m m^{\top} \nabla \log p $ (see theorem 2.36, 2.28~\cite{pigola2014global} or theorem 2.38 for weaker conditions). 
Note
\begin{talign}
\left\| m^{\top} \left(\nabla \log p - \nabla \log q\right) \right\|_{2}^2=
\| m^{\top} \nabla \log p\|_2^2+
\| m^{\top} \nabla \log q \|^2_2-
2m^{\top} \nabla \log p \cdot m^{\top} \nabla \log q 
\end{talign}
thus we have 
\begin{talign}
\int_\X \metric{m^{\top} \nabla \log p }{ m^{\top} \nabla \log q}  \dd \Q & = \int_\X \metric{\nabla \log q }{ m m^{\top} \nabla \log p} \dd Q \\
&= \int_\X \metric{\nabla  q }{ m m^{\top} \nabla \log p} \dd x \\
&= \int_\X \left(\nabla  \cdot \left( q  m m^{\top} \nabla \log p \right)-q  \nabla \cdot\left( m m^{\top} \nabla \log p\right) \right)\dd x\\
&= -  \int_\X q  \nabla \cdot\left( m m^{\top} \nabla \log p\right) \dd x \\
&=  -  \int_\X   \nabla \cdot\left( m m^{\top} \nabla \log p\right) \dd \Q.
\end{talign} 

\subsubsection{Diffusion Score Matching Estimators}
As for the standard SM estimator, 
the DSM is only defined for distributions with sufficiently smooth densities.
However the $\theta$-dependent part of $\dsm_m(\Q,\P_{\theta})$ \footnote{ Here we use 
$\nabla \cdot\left( m m^{\top} \nabla \log p\right) = \metric{\nabla \cdot( m m^{\top} )}{ \nabla \log p}+ \tr \left[mm^{\top}  \nabla^2\log p \right]$}
\begin{talign}
&\int_{\X} \left( \left\|  m^{\top}\nabla_x\log p_{\theta} \right\|_{2}^2+2\nabla \cdot\left( m m^{\top} \nabla \log p_{\theta}\right)\right) \dd \Q \\
&=\int_{\X} \left( \left\|  m^{\top}\nabla_x\log p_{\theta} \right\|_{2}^2+2\left( \metric{\nabla \cdot( m m^{\top} ) }{ \nabla \log p} + \tr \left[mm^{\top}  \nabla^2\log p \right]\right)\right) \dd \Q,
\end{talign}
 does not depend on the density of $\Q$. 
An unbiased estimator for this quantity follows by replacing $\Q$ with the empirical random measure $\Q_n \defn \frac 1 n \sum_i \delta_{X_i}$ where $X_i \sim \Q$ are independent.
Hence we consider the estimator
\begin{talign}
 \hat{\theta}^{\dsm}_n & \defn  \text{argmin}
_{\theta \in \Theta} \Q_n \left( \left\|  m^{\top}\nabla_x\log p_{\theta} \right\|_{2}^2 +2\left( \metric{\nabla \cdot( m m^{\top} )}{ \nabla \log p_{\theta}} + \tr \left[mm^{\top}  \nabla^2\log p_{\theta} \right]\right)\right). 
\end{talign}
In components, this corresponds to:
\begin{talign}
\hat{\theta}^{\dsm}_n 
&= \text{argmin}_{\theta \in \Theta} \int_{\X} \dd \Q(x) \| m(x)^{\top}\nabla_x \log p(x|\theta) \|_{2}^2 + 2 \sum_{j,k,l=1}^d \partial_{x^j} \partial_{x^k} \log p(x|\theta) m_{kl}(x) m_{jl}(x)\\ 
& \qquad + 2 \sum_{j,k,l=1}^d \partial_{x^k} \log p(x|\theta) \left(\partial_{x^j} m_{kl}(x) m_{jl}(x) + m_{kl}(x) \partial_{x^j} m_{jl}(x)\right)
\end{talign}

\subsubsection{Proof of {\cref{DSM-DKSD-limit}}: DSM as a limit of DKSD}
We now consider the the limit in which DKSD converges to DSM:

\begin{theorem}[\textbf{DSM as a limit of DKSD}]\label{DSM-DKSD-limit}
Let $ \Q $ be a distribution on $\mathbb{R}^{d}$ with $q>0$ and suppose $s_{p}-s_{q} \in C(\mathbb{R}^{d})\cap L^{2}(\Q)$. Let $\Phi_{\gamma}(s) \defn \gamma^{-d}\Phi(s/\gamma)$, $\gamma >0$, $\Phi\in L^{1}(\mathbb{R}^{d})$, $\Phi > 0$ and $\int_{\mathbb{R}^d}\Phi(s) \dd s = 1$.
Consider the reproducing kernel $k_{\gamma}^{q}(x,y) \;=\;k_{\gamma}(x,y)/\sqrt{q(x)q(y)}=\Phi_{\gamma}(x-y)/\sqrt{q(x)q(y)}$, 
and set $K_{\gamma}^q \defn B k^q_{\gamma}$.
Then, $\dksd_{K^q_{\gamma},m}(\Q \|\P)^{2} \rightarrow \dsm_m(\Q\| \P)$, as $\gamma \rightarrow 0$.
\end{theorem}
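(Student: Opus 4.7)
The starting point is the closed form for DKSD derived in the proof of Proposition~\ref{DKSD-divergence} (see Appendix~\ref{Derivations-DKSD}): under the stated integrability conditions,
\begin{talign}
\dksd_{K,m}(\Q\|\P)^{2} \;=\; \int_{\R^d}\!\!\int_{\R^d} q(x)\,\delta_{p,q}(x)^{\top} K(x,y)\,\delta_{p,q}(y)\,q(y)\, \dd x\, \dd y,
\end{talign}
where $\delta_{p,q} \defn s_p - s_q = m^\top(\nabla\log p - \nabla\log q)$. Plugging in the specific kernel $K^q_\gamma(x,y) = B\,\Phi_\gamma(x-y)/\sqrt{q(x)q(y)}$ cancels the $q$ factors and yields, with $g(x) \defn \sqrt{q(x)}\,\delta_{p,q}(x)$,
\begin{talign}
\dksd_{K^q_\gamma,m}(\Q\|\P)^{2} \;=\; \int_{\R^d}\!\!\int_{\R^d} g(x)^{\top} B\, g(y)\, \Phi_\gamma(x-y)\, \dd x\, \dd y \;=\; \int_{\R^d} g(x)^{\top} B\,(g \star \Phi_\gamma)(x)\, \dd x,
\end{talign}
where $\star$ denotes componentwise convolution. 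This rewriting converts the claimed limit into a question about approximate identities.

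Next, I would verify that $g \in L^2(\R^d,\R^d)$: the hypothesis $s_p - s_q \in L^2(\Q)$ gives
\begin{talign}
\|g\|_{L^2(\R^d)}^{2} \;=\; \int_{\R^d} q(x)\,\|\delta_{p,q}(x)\|_2^{2}\, \dd x \;=\; \|\delta_{p,q}\|_{L^2(\Q)}^{2} \;<\; \infty.
\end{talign}
Since $\Phi_\gamma(s) = \gamma^{-d}\Phi(s/\gamma)$ with $\Phi \in L^1(\R^d)$, $\Phi > 0$ and $\int \Phi = 1$, the family $\{\Phi_\gamma\}_{\gamma > 0}$ is a standard approximate identity; applied componentwise this gives $g \star \Phi_\gamma \to g$ in $L^2(\R^d)$ as $\gamma \to 0$. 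A single Cauchy--Schwarz estimate then closes the argument:
\begin{talign}
\left| \int_{\R^d} g^{\top} B\,(g\star\Phi_\gamma - g)\, \dd x \right| \;\leq\; \opnorm{B}\, \|g\|_{L^2(\R^d)}\, \|g\star\Phi_\gamma - g\|_{L^2(\R^d)} \;\longrightarrow\; 0.
\end{talign}
Hence $\dksd_{K^q_\gamma,m}(\Q\|\P)^2 \to \int g^{\top} B\, g\, \dd x = \int q(x)\,\delta_{p,q}(x)^{\top} B\,\delta_{p,q}(x)\, \dd x$, and when $B = I$ this is exactly $\int \|m^\top(\nabla\log p - \nabla\log q)\|_2^{2}\,\dd\Q = \dsm_m(\Q\|\P)$.

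\textbf{Main obstacle.} The only non-routine step is the passage to the limit in the convolution. The continuity hypothesis $s_p - s_q \in C(\R^d)$ is not strictly needed for $L^2$-approximate-identity convergence (which only uses $g \in L^2$), but it reassures us that $g$ is a bona fide pointwise-defined function rather than merely an equivalence class, so the displayed expressions are unambiguous. The subtlety to guard against is that $g$ is vector-valued; this is handled by applying the scalar mollifier theorem to each coordinate and summing, and by using the operator norm of $B$ in the Cauchy--Schwarz bound. No other step involves analysis beyond standard mollification theory.
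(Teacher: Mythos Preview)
Your proof is correct and follows the same overall strategy as the paper: both start from the bilinear representation $\dksd_{K,m}(\Q\|\P)^2 = \iint q(x)\delta_{p,q}(x)^\top K(x,y)\delta_{p,q}(y)q(y)\,\dd x\,\dd y$, substitute $K^q_\gamma$ so that the $\sqrt{q}$ factors cancel, and reduce the question to the convergence of a convolution with the approximate identity $\Phi_\gamma$.

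The execution differs in one useful respect. The paper proves an ad hoc lemma by introducing the correlation function $H(s) = \int f(x)^\top B\,g(x-s)\,\dd x$, using the continuity hypothesis on $s_p - s_q$ to conclude that $H$ is continuous at $0$, and then splitting the integral $\int H(s)\Phi_\gamma(s)\,\dd s$ into near and far regions. You instead invoke the standard $L^2$ mollifier theorem ($g\star\Phi_\gamma \to g$ in $L^2$ for any $g \in L^2$) and close with Cauchy--Schwarz. Your route is shorter and, as you correctly observe, does not actually use the continuity assumption; the paper's direct argument is more self-contained but leans on that hypothesis. For general $B$, both proofs arrive at the limit $\int_{\R^d} \delta_{p,q}^\top B\,\delta_{p,q}\,\dd\Q$, which the paper then identifies with $\dsm_m$ computed in the $B$-weighted inner product $\langle u,v\rangle_B = u^\top B v$; your remark about $B=I$ is consistent with this.
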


 We use the following lemma as a stepping stone.
\begin{Lemma} Suppose $\Phi\in L^{1}(\mathbb{R}^{d}),$
 $\Phi > 0$ and $\int\Phi(s)\,\dd s  \;=\; 1$. 
 Let $f,\,g\in C(\mathbb{R}^{d})\cap L^{2}(\mathbb{R}^{d}),$
then defining $K_{\gamma} \defn B \Phi_{\gamma}$ where $\Phi_{\gamma}(s) \defn \gamma^{-d}\Phi(s/\gamma)$ and $\gamma \;>\;0$,
we have
\begin{talign}
\int\int f(x)^{\top}K_{\gamma}(x,y)g(y)\dd x\,\dd y \; \rightarrow \; \int f(x)^{\top}Bg(x)\,\dd x,\quad\mbox{as}\qquad\gamma \; \rightarrow \; 0.
\end{talign}
\end{Lemma}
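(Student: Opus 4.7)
\begin{proof-sketch}
The plan is to recognize $\Phi_\gamma$ as an approximation to the identity (a mollifier) and reduce the claim to the standard fact that $\Phi_\gamma \ast g \to g$ in $L^2(\mathbb{R}^d)$ as $\gamma \to 0$, for any $g \in L^2(\mathbb{R}^d)$. Because $B$ is a fixed constant matrix, it can be pulled outside the integrals and plays no essential role; the content is purely scalar.

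First I would use Fubini-Tonelli (justified because $f,g \in L^2$, $\Phi_\gamma \in L^1$, $B$ is bounded, and a short Cauchy--Schwarz / Young's inequality argument shows the integrand is absolutely integrable) to rewrite
\begin{talign*}
\int\!\!\int f(x)^\top K_\gamma(x,y) g(y) \dd x \dd y
\;=\; \int f(x)^\top B (\Phi_\gamma \ast g)(x) \dd x,
\end{talign*}
where $(\Phi_\gamma \ast g)(x) \defn \int \Phi_\gamma(x-y) g(y) \dd y$. Next I would invoke the standard approximation-to-identity result: since $\Phi \in L^1(\mathbb{R}^d)$, $\Phi \geq 0$ with $\int \Phi = 1$, the family $\{\Phi_\gamma\}_{\gamma>0}$ is an approximate identity, and therefore $\Phi_\gamma \ast g \to g$ in $L^2(\mathbb{R}^d)$ as $\gamma \to 0$ whenever $g \in L^2(\mathbb{R}^d)$ (see, e.g., standard real analysis references on mollification).

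Finally I would conclude by Cauchy--Schwarz:
\begin{talign*}
\left| \int f(x)^\top B \left( \Phi_\gamma \ast g - g \right)(x) \dd x \right|
\;\leq\; \|B\|_{\mathrm{op}} \, \|f\|_{L^2} \, \|\Phi_\gamma \ast g - g\|_{L^2}
\;\longrightarrow\; 0,
\end{talign*}
which gives the stated limit. The only mildly delicate point is verifying the hypotheses of Fubini and of the mollifier convergence hold under the given regularity ($f,g$ continuous and in $L^2$, $\Phi \in L^1$ with unit mass); continuity of $f,g$ is not strictly needed for the $L^2$ argument, so the result is robust. I do not expect any substantial obstacle here, as both facts are classical.
\end{proof-sketch}
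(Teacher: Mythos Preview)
Your argument is correct. It is, however, organized differently from the paper's proof, and it is worth noting the contrast.

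The paper makes the substitution $s=x-y$ and rewrites the double integral as $\int H(s)\,\Phi_\gamma(s)\,\dd s$ with $H(s)=\int f(x)^{\top}Bg(x-s)\,\dd x$; it then argues directly that $H$ is bounded and continuous (using $f,g\in L^2$ and translation continuity), splits the $s$-integral into $|s|<\delta$ and $|s|\geq\delta$, and exploits the concentration of $\Phi_\gamma$ near the origin to show $\int H\,\Phi_\gamma\to H(0)$. In effect the paper reproves, from scratch, a bounded-continuous version of the approximate-identity theorem.

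You instead keep the $x$-variable outside, recognize $\int\Phi_\gamma(x-y)g(y)\,\dd y$ as a convolution, and invoke the standard $L^2$ mollifier result $\Phi_\gamma\ast g\to g$ in $L^2$, finishing with Cauchy--Schwarz. This is shorter and cites a classical theorem rather than unfolding it; it also makes transparent that continuity of $f$ and $g$ is not needed, only membership in $L^2$. The paper's route is more self-contained but longer; your route is the textbook way and arguably cleaner. Both are valid, and your Fubini justification via Young's inequality plus Cauchy--Schwarz is the right way to handle the absolute integrability.
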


\begin{proof}
We rewrite 
\begin{talign}
\int_\X \int_\X f(x)^{\top}B\Phi_{\gamma}(x-y)g(y)\,\dd x\,\dd y 
& =  \int_\X \int_\X  f(x)^{\top}Bg(x-s)\dd x\,\Phi_{\gamma}(s)\,\dd s = \int_\X H(s)\Phi_{\gamma}(s)\,\dd s,
\end{talign}
where $H:\X \to \R$ is defined by
\begin{talign}
H(s) \defn \int_\X f(x)^{\top}Bg(x-s)\,\dd x  
& = \int_\X \langle f(x), Bg(x-s)\rangle_{\R^d} \dd x \defn \int_\X \langle f(x), g(x-s)\rangle_B \dd x.
\end{talign}
 Since $f,g\in C(\mathbb{R}^{d})\cap L^{2}(\mathbb{R}^{d}),$
the function $H(s)$ is continuous, bounded, $|H(s)| \leq A\lVert f\rVert_{L^{2}(\mathbb{R}^{d})}\lVert g\rVert_{L^{2}(\mathbb{R}^{d})}$ for a constant $A>0$ depending only on $B$, and $H(0)  =\int f(x)^{\top}Bg(x)\,\dd x$.
Given $\delta \;>\;0$, we can split the integral as follows:
\begin{talign}
\int_{|s|<\delta}H(s)\Phi_{\gamma}(s)\,\dd s\;+\;\int_{|s|>\delta}H(s)\Phi_{\gamma}(s)\,\dd s \defn I_{1}+I_{2}.
\end{talign}

By continuity, given $\epsilon \in (0,1)$ there exists $\delta >0$ such
that $|H(s)-H(0)|<\epsilon$ for all $|s| <\delta$. 
Let $I_{<\delta} \defn \int_{|y|<\delta}\Phi_{\gamma}(y)\,\dd y >0$ since $\Phi>0$.
Consider
\begin{talign}
I_{1}-H(0) &\;=\;
\int_{|s|<\delta}\Phi_{\gamma}(s)H(s) \dd s -H(0) \;=\; \int_{|s|<\delta}\Phi_{\gamma}(s)\left(H(s)-\frac{H(0)}{I_{<\delta}} \right) \dd s \\
&\;=\; \int_{|s|<\delta}\frac{\Phi_{\gamma}(s)}{I_{<\delta}}\left(H(s)I_{<\delta} \;-\;H(0)\right)\,\dd s.
\end{talign}
Clearly $\int \Phi_{\gamma}(s) \dd s=\int \gamma^{-d}\Phi(s/\gamma) \dd s= \int \Phi(z) \dd z=1$, since $z \defn s/\gamma$ implies $\dd z = \gamma^{-d} \dd s$, 
so 
\begin{talign}
I_{<\delta}\;=\;1-I_{>\delta} \;=\; 1-\int_{|y|>\delta/\gamma}\Phi(y)\,\dd y.
\end{talign}
Then since $\Phi$ is integrable, there exists $\gamma_{0}(\delta)>0$ s.t. for $\gamma\;<\;\gamma_{0}(\delta)$ we have
 $\int_{|y|>\delta/\gamma}\Phi(y)\,\dd y<\epsilon$ 
and thus $0<1-\epsilon <I_{<\delta}<1$. Therefore, for $\gamma \;<\;\gamma_{0}(\delta):$
\begin{talign}
\left|I_{1}-H(0)\right| & = \left| \int_{|s|<\delta}\frac{\Phi_{\gamma}(s)}{I_{<\delta}}\left(H(s)I_{<\delta}-H(0)\right) \dd s \right|\\
 & \leq \int_{|s|<\delta}\frac{\Phi_{\gamma}(s)}{I_{<\delta}}\left|\left(\left(H(s)-H(0)\right)I_{<\delta}+H(0)\left(I_{<\delta}-1\right)\right)\right| \dd s
 \\
 & \leq \int_{|s|<\delta}\frac{\Phi_{\gamma}(s)}{I_{<\delta}}\left(\left|H(s)-H(0)\right|I_{<\delta}+\left|1-I_{<\delta}\right|H(0)\right)\dd s\\
 & \leq \int_{|s|<\delta}\frac{\Phi_{\gamma}(s)}{I_{<\delta}}\left(\epsilon I_{<\delta}+\epsilon H(0)\right) \dd s\\
 & \leq \epsilon \int_{|z|<\delta/\gamma}\Phi(z)\dd z +H(0)\epsilon \leq \left(1+H(0)\right) \epsilon.
\end{talign}
\\
For the second term, since $H$ is bounded we have
\begin{talign}
I_{2} & = \int_{|s|>\delta}H(s)\Phi_{\gamma}(s)\dd s  
= \int_{|s|>\delta/\gamma}H(\gamma s)\Phi(s)\dd s 
\leq  \| H \|_{\infty}\int_{|s|>\delta/\gamma}\Phi(s)\dd s,
\end{talign}
so that, $|I_{2}| \;\leq \; \| H \|_{\infty}\epsilon$, for $\gamma\;<\;\gamma_{0}(\delta)$.
It follows that
\begin{talign}
\left|\int\int f(x)^{\top}K_{\gamma}(x,y)g(y)\dd x\,\dd y-\int f(x)^{\top}Bg(x)\,\dd x\right| 
 &\; = \; \left| \int H(s) \Phi_{\gamma}(s) \dd s -H(0) \right|  \\
 &\; = \; \left|I_1+I_2 -H(0) \right |\\
&\;\leq \;|I_{1}-H(0)|+|I_{2}| \; \rightarrow \; 0,
\end{talign}
as $\gamma \rightarrow  0$ as required.
\end{proof}
We note that $f\in L^{2}(\Q)$ if and only if $f\sqrt{q}\in L^{2}(\mathbb{R}^{d}).$
Therefore applying the previous result, we have that
\begin{talign}
\int_{\X} \int_{\X} f(x)^{\top}K_{\gamma}^{q}(x,y)g(y)\,\dd \Q(x)\,\dd \Q(y)
 & = \int_{\X} \int_{\X} \left(\sqrt{q(x)}f(x)\right)^{\top}K_{\gamma}(x,y) \left(g(y)\sqrt{q(y)}\right)\dd x\dd y\\
 & \rightarrow  \int_{\X} f(x)^{\top}Bg(x)\dd \Q(x),\quad\mbox{as}\quad\gamma  \rightarrow  0.
\end{talign}
Note that if $k$ is a (scalar) kernel function, then $(x,y) \mapsto r(x) k(x,y)r(y)$ is a kernel for any function $r:\X \to \R$, and thus $k_{\gamma}^q$ defines a sequence of kernels parametrised by a scale parameter $\gamma >0$.
It follows that the sequence of DKSD paramaterised by $K^q_{\gamma}$
\balignt\dksd_{K^q_{\gamma},m}(\Q \|\P)^{2}=\int_{\X} \int_{\X} q(x)\delta_{p,q}(x)^{\top}K^q_{\gamma}(x,y) \delta_{p,q}(y)q(y)  \dd x \dd y
\ealignt
converges to DSM with inner product $\langle \cdot,\cdot \rangle_B \defn \langle \cdot, B \cdot \rangle_2$ on $\R^d$.
\balignt \dsm_m(\Q\| \P) = \int_{\X} \delta_{q,p}(x)^{\top}B \delta_{q,p}(x) \dd \Q = \int_{\X} \| m^{\top}\left( \nabla \log p -\nabla \log q \right) \|^2_B \dd \Q\ealignt


\section{Information Semi-Metrics of Minimum Stein Discrepancy Estimators}\label{appendix:information-semi-metrics}

In this section, we derive expressions for the metric tensor of DKSD and DSM.
Let $\mathcal P_{\Theta}$ be a parametric family of probability measures on $\X$. 
Given a map $D:\mathcal P_{\Theta} \times \mathcal P_{\Theta} \to \R$, for which $D(\P_1 \| \P_2)=0$ iff $\P_1 = \P_2$, its associated information semi-metric is defined  as the map $\theta \mapsto g(\theta)$, where $g(\theta)$ is the symmetric bilinear form $g(\theta)_{ij} = - \half \frac{\partial^2}{\partial \alpha^i \partial \theta^j}D( \P_{\alpha}\|\P_{\theta}) |_{\alpha=\theta}$. 
When $g$ is positive definite, we can use it to perform (Riemannian) gradient descent on $\mathcal P_{\Theta} \cong \Theta$.

\subsection{Proof of {\cref{information-dksd}}: Information Semi-Metric of Diffusion Kernel Stein Discrepancy}

From \cref{DKSD-divergence} we have
\begin{talign}
\dksd_{K,m}( \P_{\alpha},\P_{\theta})^2 = \int_\X \int_\X p_{\alpha}(x)\delta_{p_{\theta},p_{\alpha}}(x)^{\top}K(x,y) \delta_{p_{\theta},p_{\alpha}}(y)p_{\alpha}(y)  \dd x \dd y  
\end{talign}
where $ \delta_{p_{\theta},p_{\alpha}} = m^{\top}_{\theta}\left(\nabla \log p_{\theta}-\nabla \log p_{\alpha} \right)$. Thus 
\begin{talign}
\partial_{\alpha^i} \partial_{\theta^j} \dksd_{K,m}( \P_{\alpha},\P_{\theta})^2 
&=
  \partial_{\alpha^i} \partial_{\theta^j}  \int_\X \int_\X p_{\alpha}(x)\delta_{p_{\theta},p_{\alpha}}(x)^{\top}K(x,y) \delta_{p_{\theta},p_{\alpha}}(y)p_{\alpha}(y)  \dd x \dd y  \\
  &=  \partial_{\alpha^i}   \int_\X \int_\X p_{\alpha}(x)\partial_{\theta^j}\delta_{p_{\theta},p_{\alpha}}(x)^{\top}K(x,y) \delta_{p_{\theta},p_{\alpha}}(y)p_{\alpha}(y)  \dd x \dd y \\
  &+
  \partial_{\alpha^i}   \int_\X \int_\X p_{\alpha}(x)\delta_{p_{\theta},p_{\alpha}}(x)^{\top}K(x,y) \partial_{\theta^j}\delta_{p_{\theta},p_{\alpha}}(y)p_{\alpha}(y)  \dd x \dd y ,
\end{talign}
and using $\delta_{p_{\theta},p_{\theta}}=0$, we get:
\begin{talign}
&\partial_{\alpha^i}   \int_\X \int_\X p_{\alpha}(x)\partial_{\theta^j}\delta_{p_{\theta},p_{\alpha}}(x)^{\top}K(x,y) \delta_{p_{\theta},p_{\alpha}}(y)p_{\alpha}(y)  \dd x \dd y \big|_{\alpha=\theta}\\
&=
\partial_{\alpha^i}   \int_\X \int_\X p_{\alpha}(x)\left( \partial_{\theta^j}m^{\top}_{\theta}( \nabla \log p_{\theta}-\nabla \log p_{\alpha})+ m^{\top}_{\theta} \partial_{\theta^j}\nabla \log p_{\theta}  \right)^{\top} K(x,y) \delta_{p_{\theta},p_{\alpha}}(y)p_{\alpha}(y)  \dd x \dd y \big|_{\alpha=\theta} \\
&=
  \int_\X \int_\X p_{\alpha}(x)\left( m^{\top}_{\theta} \partial_{\theta^j}\nabla \log p_{\theta}  \right)^{\top} K(x,y) \partial_{\alpha^i} \delta_{p_{\theta},p_{\alpha}}(y)p_{\alpha}(y)  \dd x \dd y \big|_{\alpha=\theta} \\
  & =
 - \int_\X \int_\X p_{\alpha}(x)\left( m^{\top}_{\theta} \partial_{\theta^j}\nabla \log p_{\theta}  \right)^{\top} K(x,y)  \left( m^{\top}_{\theta} \partial_{\alpha^i}\nabla \log p_{\alpha}  \right)(y)p_{\alpha}(y)  \dd x \dd y \big|_{\alpha=\theta} \\
 & = 
  - \int_\X \int_\X \left( m^{\top}_{\theta} \partial_{\theta^j}\nabla \log p_{\theta}  \right)^{\top}(x) K(x,y)  \left( m^{\top}_{\theta} \partial_{\theta^i}\nabla \log p_{\theta}  \right)(y) \dd \P_{\theta}(x) \dd \P_{\theta}(y).
\end{talign}

Similarly, we also get:

\begin{talign}
 &\partial_{\alpha^i}   \int_\X \int_\X p_{\alpha}(x)\delta_{p_{\theta},p_{\alpha}}(x)^{\top}K(x,y) \partial_{\theta^j}\delta_{p_{\theta},p_{\alpha}}(y)p_{\alpha}(y)  \dd x \dd y  \big|_{\alpha=\theta} \\
 &=
 - \int_\X \int_\X \left( m^{\top}_{\theta} \partial_{\theta^i}\nabla \log p_{\theta}  \right)^{\top}(x) K(x,y)  \left( m^{\top}_{\theta} \partial_{\theta^j}\nabla \log p_{\theta}  \right)(y) \dd \P_{\theta}(x) \dd \P_{\theta}(y) \\
 &= - \int_\X \int_\X \left( m^{\top}_{\theta} \partial_{\theta^i}\nabla \log p_{\theta}  \right)^{\top}(y) K(y,x)  \left( m^{\top}_{\theta} \partial_{\theta^j}\nabla \log p_{\theta}  \right)(x) \dd \P_{\theta}(y) \dd \P_{\theta}(x) 
  \\
 &= - \int_\X \int_\X \left( m^{\top}_{\theta} \partial_{\theta^i}\nabla \log p_{\theta}  \right)^{\top}(y) K(x,y)^{\top}  \left( m^{\top}_{\theta} \partial_{\theta^j}\nabla \log p_{\theta}  \right)(x) \dd \P_{\theta}(y) \dd \P_{\theta}(x) \\
 &= - \int_\X \int_\X  \left( m^{\top}_{\theta} \partial_{\theta^j}\nabla \log p_{\theta}  \right)(x)^{\top} K(x,y)\left( m^{\top}_{\theta} \partial_{\theta^i}\nabla \log p_{\theta}  \right)(y)   \dd \P_{\theta}(y) \dd \P_{\theta}(x). 
\end{talign}
Hence, we conclude that
\begin{talign}
\half \partial_{\alpha^i} \partial_{\theta^j} \dksd_{K,m}(\P_{\alpha},\P_{\theta})^2 = - \int_\X \int_\X  \left( m^{\top}_{\theta} \partial_{\theta^j}\nabla \log p_{\theta}  \right)(x)^{\top} K(x,y)\left( m^{\top}_{\theta} \partial_{\theta^i}\nabla \log p_{\theta}  \right)(y)   \dd \P_{\theta}(y) \dd \P_{\theta}(x)
\end{talign}
The information tensor is positive semi-definite. Indeed writing $V_\theta(y)\defn m^{\top}_{\theta}(y) \nabla_y  \metric{v}{\nabla_{\theta} \log p_{\theta}}$:
\begin{talign}
\metric{v}{g(\theta)v}
& = v^ig_{ij}(\theta)v^j  \\
&=\int_{\X} \int_{\X}  \left( m^{\top}_{\theta}(x) \nabla_x \metric{v}{\nabla_{\theta} \log p_{\theta}}  \right)^{\top} K(x,y)\left( m^{\top}_{\theta}(y) \nabla_y  \metric{v}{\nabla_{\theta} \log p_{\theta}}  \right)  \dd \P_{\theta}(x) \dd \P_{\theta}(y) \\
&= 
\int_{\X} \int_{\X}  \metric{ m^{\top}_{\theta}(x) \nabla_x \metric{v}{\nabla_{\theta} \log p_{\theta}} }{ K(x,y) m^{\top}_{\theta}(y) \nabla_y  \metric{v}{\nabla_{\theta} \log p_{\theta}} } \dd \P_{\theta}(x) \dd \P_{\theta}(y)\\
&= 
\int_{\X} \int_{\X}  \metric{ V_\theta(x)}{ K(x,y)V_\theta(y) } \dd \P_{\theta}(x) \dd \P_{\theta}(y) \geq 0
\end{talign}
since $K$ is IPD.

\subsection{Proof of \cref{app:information-dsm}: Information Semi-Metric of Diffusion Score Matching}

\begin{proof}
The information metric is given by $g(\theta)_{ij} = - \half \frac{\partial^2}{\partial \alpha^i \partial \theta^j}\dsm( p_{\alpha}\|p_{\theta}) |_{\alpha=\theta}$.
Recall 
\begin{talign}
\dsm( p_{\alpha}\|p_{\theta})= \int_{\X} \left\| m^{\top} \left(\nabla \log p_{\theta} - \nabla \log p_{\alpha}\right) \right\|_{2}^2 p_{\alpha}\dd x.
\end{talign}
Moreover 
\begin{talign}
  \half\partial_{\alpha^i} \partial_{\theta^j}
   \dsm( p_{\alpha}\|p_{\theta})\big|_{\alpha=\theta}
   &=  \half\partial_{\alpha^i} \partial_{\theta^j} \int_{\X} \left\| m^{\top} \left(\nabla \log p_{\theta} - \nabla \log p_{\alpha}\right) \right\|_{2}^2 p_{\alpha}\dd x\big|_{\alpha=\theta}\\
   &=
   \partial_{\alpha^i}  \int_{\X} \left( m^{\top} \left(\nabla \log p_{\theta} - \nabla \log p_{\alpha}\right) \right)\cdot \left(m^{\top} \partial_{\theta^j} \nabla \log p_{\theta}\right) p_{\alpha}\dd x\big|_{\alpha=\theta}\\
   &=   \int_{\X} \left( m^{\top} \left(\nabla \log p_{\theta} - \nabla \log p_{\alpha}\right) \right)\cdot \left(m^{\top} \partial_{\theta^j} \nabla \log p_{\theta}\right)  \partial_{\alpha^i} p_{\alpha}\dd x\big|_{\alpha=\theta}\\
   &-  \int_{\X} \left( m^{\top} \partial_{\alpha^i}\nabla \log p_{\alpha} \right)\cdot \left(m^{\top} \partial_{\theta^j} \nabla \log p_{\theta}\right) p_{\alpha}\dd x \big|_{\alpha=\theta} \\
   &= -  \int_{\X} \left( m^{\top} \partial_{\theta^i}\nabla \log p_{\theta} \right)\cdot \left(m^{\top} \partial_{\theta^j} \nabla \log p_{\theta}\right) \dd \P_{\theta}.
   \end{talign}    
 Finally $g$ is semi-positive definite, 
 \begin{talign}
 \metric {v}{g(\theta)v}=v^i g_{ij}(\theta)v^j 
 &=
 \int_{\X} v^i  m^{\top}_{rs} \partial_{x^s} \partial_{\theta^i} \log p_{\theta} m^{\top}_{rl} \partial_{x^l} \partial_{\theta^j} \log p_{\theta}v^j \dd \P_{\theta} \\
 & =
 \int_{\X}   m^{\top}_{rs} \partial_{x^s} \metric{v}{\nabla_{\theta} \log p_{\theta}} m^{\top}_{rl} \partial_{x^l} \metric{v}{\nabla_{\theta} \log p_{\theta}} \dd \P_{\theta}  \\
 & = 
 \int_{\X}  \metric{ m^{\top} \nabla_{x} \metric{v}{\nabla_{\theta} \log p_{\theta}}}{ m^{\top} \nabla_x \metric{v}{\nabla_{\theta} \log p_{\theta}}} \dd \P_{\theta} \\
 & = 
  \int_{\X} \| m^{\top} \nabla_{x} \metric{v}{\nabla_{\theta} \log p_{\theta}} \|^2\dd \P_{\theta} 
  \geq 0
 \end{talign}  
\end{proof}


\section{Proofs of Consistency and Asymptotic Normality for minimum Stein Discrepancy Estimators}\label{appendix:asymptotic-derivations}

In this appendix, we prove several results concerning the consistency and asymptotic normality of DKSD and DSM estimators.

\subsection{Diffusion Kernel Stein Discrepancies}
Given the Stein kernel \eqref{stein-kernel}
we want to estimate
$ \theta_*^{\dksd}  \defn  \text{argmin}_{\theta \in \Theta} \dksd_{K,m}(\mathbb{Q},\P_{\theta})^2 =  \text{argmin}_{\theta \in \Theta} \int_\X \int_\X \stein_{\theta}(x, y)\mathbb{Q}(\dd x)\mathbb{Q}(\dd y)  $
using a sequence of estimators $ \hat \theta_n^{\dksd}  \in  \text{argmin}_{\theta \in \Theta} \hvf\dksd_{K,m}( \mathbb{Q},\P_{\theta})^2$ that minimise the $U$-statistic approximation \eqref{dksd-approx}. 
We will assume we are in the specified setting $\Q =\P_{\theta_*^{\dksd}}\in \mathcal P_{\Theta}$. 
In the misspecified setting 
it is necessary to further assume the existence of a unique minimiser.

\subsubsection{Strong Consistency }
We first prove a general strong consistency result based on an equicontinuity assumption:
\begin{Lemma}
Let $\X=\R^d$.
Suppose  $\{\theta \mapsto \stein_{\theta}(x,y)\}, \{ \theta \mapsto \Q_z \stein_\theta(x,z)\}$ are equicontinuous on any compact subset $C \subset \Theta$ for $x,y$ in a sequence of sets whose union has full $\Q$-measure, and 
$\|s_{p_{\theta}}(x) \| \leq f_1(x) $, $\|\nabla_x \cdot m_{\theta}(x) \| \leq f_2(x)$, $\| \nabla_x \cdot (m_\theta (x)K(x,y))\| \leq f_3(x,y)$, $|\tr\left[m(y) \nabla_y \nabla_x\cdot(m(x)K) \right]| \leq f_4(x,y)$ hold on $C$, where 
 $f_1(x) \sqrt{K(x,x)_{ii}} \in L^1(\Q)$, and $f_4,f_3f_2,f_1 f_3 \in L^1(\Q \otimes \Q)$.
Assume further that $\theta \mapsto \P_{\theta}$ is injective.  
Then we have a unique minimiser $\theta_*^{\dksd}$, and if either $\Theta$ is compact, or $\theta_*^{\dksd} \in \text{int}(\Theta)$ and $\Theta$ and $\theta \mapsto \hvf\dksd_{K,m}( \{X_i\}_{i=1}^n,\P_{\theta})^2$ are convex, then $\hat \theta_n^{\dksd}$ is strongly consistent.
\end{Lemma}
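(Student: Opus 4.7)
The plan is to follow the standard recipe for consistency of M-estimators, which requires two main ingredients: (a) identifiability of $\theta_*^{\dksd}$, and (b) uniform almost-sure convergence of the U-statistic $\hvf\dksd_{K,m}^2$ to the population DKSD on compact subsets of $\Theta$. Identifiability is immediate: under the assumed conditions the kernel $K$ is IPD (by \cref{IPD-matrix-kernels} in the standing setup of the paper), so \cref{DKSD-divergence} gives that $\dksd_{K,m}(\Q\|\P_\theta)^2 = 0$ iff $\Q = \P_\theta$; combined with the injectivity of $\theta \mapsto \P_\theta$ and the specification $\Q = \P_{\theta_*^{\dksd}}$, this forces $\theta_*^{\dksd}$ to be the unique global minimiser of $\theta \mapsto \dksd_{K,m}(\Q\|\P_\theta)^2$.

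For the pointwise a.s. convergence, I would expand $\stein_\theta$ using the five-term decomposition from \cref{Stein-kernel} and check that each term is dominated by an element of $L^1(\Q \otimes \Q)$ uniformly in $\theta \in C$: the first term is controlled by $f_1(x) f_1(y)$ combined with the reproducing-kernel bound $|\inner{u}{K(x,y)v}| \leq \sqrt{u^\top K(x,x) u}\,\sqrt{v^\top K(y,y) v}$ using the hypothesis $f_1 \sqrt{K(x,x)_{ii}} \in L^1(\Q)$; the cross terms are dominated by $f_1 f_3$; the fourth by $f_2 f_3$; and the last by $f_4$. Integrability of the kernel of the U-statistic then lets the strong law for U-statistics deliver $\hvf\dksd_{K,m}^2(\{X_i\}_{i=1}^n \| \P_\theta) \to \dksd_{K,m}^2(\Q\|\P_\theta)$ almost surely for each fixed $\theta \in \Theta$.

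To upgrade to uniform convergence on a compact $C \subset \Theta$, I would combine the pointwise a.s. convergence with the equicontinuity hypothesis. The standard route is: cover $C$ by finitely many balls of radius $\delta$ with centres $\{\theta_k\}$, apply pointwise SLLN at each centre, and then use equicontinuity of $\theta \mapsto \stein_\theta(x,y)$ and $\theta \mapsto \Q_z \stein_\theta(x,z)$ together with the dominating functions to show that the fluctuations of $\hvf\dksd^2 - \dksd^2$ within each ball vanish as $\delta \to 0$. This yields $\sup_{\theta\in C}|\hvf\dksd_{K,m}^2(\{X_i\}\|\P_\theta) - \dksd_{K,m}^2(\Q\|\P_\theta)| \to 0$ almost surely.

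From here the two cases in the lemma split easily. If $\Theta$ is compact, take $C = \Theta$ and invoke the standard argmax theorem (e.g.\ Theorem 2.1 of Newey–McFadden): uniform a.s.\ convergence plus a unique well-separated minimiser gives $\hat\theta_n^{\dksd} \xrightarrow{a.s.} \theta_*^{\dksd}$. If instead $\Theta$ is convex with $\theta_*^{\dksd} \in \mathrm{int}(\Theta)$ and $\theta \mapsto \hvf\dksd_{K,m}^2$ is convex, I would appeal to the classical fact (see Rockafellar) that a sequence of convex functions converging pointwise on an open convex set converges uniformly on compact subsets and that the corresponding argmins converge; choosing a compact neighbourhood of $\theta_*^{\dksd}$ in $\mathrm{int}(\Theta)$ on which uniform convergence has already been established gives the conclusion. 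The main technical obstacle is the uniform-on-compacts step: verifying that the five-term decomposition of $\stein_\theta$ really does admit an integrable envelope over $\theta \in C$ under exactly the stated bounds, and stitching pointwise U-statistic SLLN to equicontinuity with enough care that both the diagonal and off-diagonal contributions to the $U$-statistic are handled simultaneously.
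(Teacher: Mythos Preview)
Your proposal is correct and follows the same skeleton as the paper's proof: identifiability via \cref{DKSD-divergence} and injectivity of $\theta\mapsto\P_\theta$, then a termwise envelope for $\stein_\theta$ using the five-term expansion of \cref{Stein-kernel} together with the reproducing bound $K(x,y)_{ij}\le\sqrt{K(x,x)_{ii}}\sqrt{K(y,y)_{jj}}$, then uniform a.s.\ convergence on compacts and the argmax theorem (Newey--McFadden, Theorem~2.1). The only differences are in the packaging of two steps. For uniform convergence the paper invokes a ready-made uniform SLLN for U-statistics (Theorem~1 of \cite{yeo2001uniform}), whose hypotheses are precisely the equicontinuity conditions and $L^1$ envelope you verify, whereas you sketch the underlying $\epsilon$-net argument by hand; these are equivalent. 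For the convex case the paper does not go via Rockafellar-type convex convergence but instead uses the localization trick of Newey--McFadden (Theorem~2.7): take a closed ball $C=\overline{B}(\theta_*^{\dksd},2\epsilon)\subset\Theta$, apply the compact case on $C$ to get $\tilde\theta_n\to\theta_*^{\dksd}$ a.s., and then use convexity of $\theta\mapsto\hvf\dksd^2$ to show that for large $n$ the minimiser over $C$ is in fact the global minimiser over $\Theta$. Your Rockafellar route works too, though note that the argmin-convergence statement for convex functions already follows from pointwise convergence on an open convex set, so you need not first establish uniform convergence on a compact neighbourhood.
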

\begin{proof}

Note $\dksd_{K,m}( \mathbb{Q},\P_{\theta})^2=0$ iff $\P_{\theta}=\P_{\theta_*^{\dksd}}$ by \cref{DKSD-divergence},  which implies $\theta = \theta_*^{\dksd}$ since  $\theta \mapsto \P_{\theta}$ is injective. Thus we have a unique minimiser at $\theta_*^{\dksd}$.

Suppose first $\Theta$ is compact and take $C=\Theta$. 
Note
\begin{talign}
|\stein (x,y)| \leq 
& \left|\metric{s_p(x)}{  Ks_p(y)}\right| + |\metric{\nabla_y\cdot\left( m(y) K\right)}{ s_p(x)}|+
|\metric{\nabla_x\cdot \left(m(x)K\right)}{ s_p(y)}|\\
&+\left|\metric{\nabla_x\cdot \left(m(x)K\right)}{  \nabla_y \cdot m}| +|\tr\left[m(y) \nabla_y \nabla_x\cdot(m(x)K) \right] \right| \\
& \leq  \left|\metric{s_p(x)}{ Ks_p(y)}\right| 
 +f_3(y,x)f_1(x)  + f_3(x,y) f_1(y)+f_3(x,y) f_2(y) + f_4(x,y),
\end{talign}
From the reproducing property $f(x)= \metric{f}{K(\cdot,x)v}_{\H^d}$, for any $f \in \H^d$, $v \in \R^d$.
Using $K(y,x) = K(x,y)^{\top}$ we have 
$K(\cdot,x)_{,i}= K(x,\cdot)_{i,}$, where $K(\cdot,x)_{,i}$ and $K(x,\cdot)_{i,}$ denote the $i^{\text{th}}$ column and row respectively, 
which implies that $K(x,\cdot)_{i,}, K(\cdot,x)_{,i}\in \H^d$ and $f(x)_i =\metric{f}{K(\cdot,x)_{,i}}_{\H^d}$.
Choosing $f=K(\cdot,y)_{,j}$ implies 
\begin{talign}
K(x,y)_{ij}=
\metric{K(\cdot,y)_{,j}}{K(\cdot,x)_{,i}}_{\H^d} &\leq \| K(\cdot,y)_{,j} \|_{\H^d}
\| K(\cdot,x)_{,i} \|_{\H^d} \\
& = \sqrt{\metric{K(\cdot,y)_{,j}}{K(\cdot,y)_{,j}}_{\H^d}} \sqrt{\metric{K(\cdot,x)_{,i}}{K(\cdot,x)_{,i}}_{\H^d}} \\
& = \sqrt{K(y,y)_{jj}}\sqrt{K(x,x)_{ii}}.
\end{talign}

It follows that 
\begin{talign}
\metric{s_p(x)}{  Ks_p(y)} &= (s_p)_i(x)K(x,y)_{ij}(s_p)_j(y) \leq (s_p)_i(x) \sqrt{K(x,x)_{ii}} \sqrt{K(y,y)_{jj}}(s_p)_j(y) \\
&\leq  \|s_p(x)\|_\infty \sqrt{K(x,x)_{ii}} \sqrt{K(y,y)_{jj}}\|s_p(y)\|_\infty  \\
&\leq C f_1(x)\sqrt{K(x,x)_{ii}} \sqrt{K(y,y)_{jj}} f_1(y),
\end{talign}  
where the constant $C>0$ arises from the norm-equivalence of $\|s_p(y)\|$ and $\|s_p(y)\|_\infty$.
Hence $\stein$ is integrable.
Thus by theorem 1 \cite{yeo2001uniform}, 
\begin{talign}
\sup_{\theta} \left| \hvf \dksd_{K,m}( \{X_i\}_{i=1}^n,\P_{\theta})^2-\dksd_{K,m}( \mathbb{Q},\P_{\theta})^2\right| &\xrightarrow[]{a.s.} 0
\end{talign}
and $\theta \mapsto \dksd_{K,m}( \mathbb{Q},\P_{\theta})^2$ are continuous. 
By theorem 2.1 \cite{newey1994large} then $\hat\theta_n^{\dksd} \xrightarrow[]{a.s.} \theta_*^{\dksd}$.

On the other hand, if $\Theta$ is convex we follow a similar strategy to the proof of theorem 2.7 \cite{newey1994large}. 
Since $\theta_*^{\dksd} \in \text{int}(\Theta)$, we can find a $\epsilon >0$ for which $C = \overline B(\theta_*^{\dksd},2\epsilon) \subset \Theta$ is a closed ball containing $\theta_*^{\dksd}$
(which is compact since $\Theta \subset \R^m$). 
Using the compact case, we know any sequence of estimators $\tilde \theta^{\dksd}_n \in \argmin_{\theta \in C}\hvf \dksd_{K,m}( \{X_i\}_{i=1}^n,\P_{\theta})^2$ is strongly consistent for $\theta_*^{\dksd}$. 
In particular, there exists $N_0$ a.s. s.t. for $n>N_0$, $\| \tilde \theta^{\dksd}_n - \theta_*^{\dksd} \| < \epsilon$ . 
If $\theta \notin C$, there exists $\lambda \in [0,1)$ s.t. $\lambda \tilde \theta^{\dksd}_n +(1-\lambda) \theta$ lies on the boundary of the closed ball $C$. 
Using convexity and the fact $\tilde \theta^{\dksd}_n$ is a minimiser over $C$,  
\begin{talign}
& \hvf \dksd_{K,m}( \{X_i\}_{i=1}^n,\P_{\tilde \theta^{\dksd}_n})^2 \\
&\qquad \leq 
\hvf \dksd_{K,m}( \{X_i\}_{i=1}^n,\P_{ \lambda \tilde \theta^{\dksd}_n +(1-\lambda) \theta })^2 \\
&\qquad  \leq \lambda \hvf \dksd_{K,m}( \{X_i\}_{i=1}^n,\P_{\tilde \theta^{\dksd}_n})^2 +(1-\lambda) \hvf \dksd_{K,m}( \{X_i\}_{i=1}^n,\P_{\theta})^2   
\end{talign}
which implies 
$\hvf \dksd_{K,m}( \{X_i\}_{i=1}^n,\P_{\tilde \theta^{\dksd}_n})^2 \leq\hvf \dksd_{K,m}( \{X_i\}_{i=1}^n,\P_{\theta})^2$ and $\tilde \theta^{\dksd}_n$ is the global minimum of $\theta \mapsto \hvf \dksd_{K,m}( \{X_i\}_{i=1}^n,\P_{\theta})^2$ for $n> N_0$.
\end{proof}

When $\stein$ is Fréchet differentiable on $\Theta$ equicontinuity can be obtained 
using the Mean value theorem, which simplifies the assumptions under which strong consistency holds.

We now prove our main result for consistency of minimum DKSD estimators: \cref{DKSD-consistency}:

\begin{proof}
Let $\lVert K \rVert + \lVert \nabla_x  K \rVert +  \lVert \nabla_x \nabla_y  K \rVert  \leq K_{\infty}$.
Note
 $\|\nabla_y\cdot\left( m(y) K\right)\| \leq 2f_2(y)K_{\infty}$ and
 $|\tr\left[m(y) \nabla_y \nabla_x\cdot(m(x)K) \right]|\leq 2 f_2(y)f_2(x) K_{\infty}$
 so 
 \begin{talign}
 |\stein_{\theta}(x,y)| \leq f_1(x)K_{\infty} f_1(y) + 2f_2(x)K_{\infty}f_1(y) + 2f_2(y)K_{\infty}f_1(x) + 3K_{\infty}f_2(x)f_2(y)
 \end{talign}
  which is symmetric and integrable by assumption.  Let $S_m$, $m=1,2, \ldots$ be an increasing sequence of closed balls in $\R^d$, such that  $\cup_{m=1}^{\infty}S_m = \mathbb{R}^d$. 
  Moreover,
  \begin{talign}
   \|\nabla_\theta \metric{s_p(x)}{  Ks_p(y)} \| & \leq  g_1(x)f_1(y)K_{\infty} + g_1(y)f_1(x)K_{\infty}\\
     \|\nabla_\theta \metric{\nabla_y\cdot\left( m(y) K\right)}{ s_p(x)} \| & \leq 2 K_{\infty} g_2(y)f_1(x)+2 f_2(y) g_1(x) K_\infty\\
     \| \nabla_\theta\metric{\nabla_x\cdot \left(m(x)K\right)}{  \nabla_y \cdot m}\|& \leq 2K_\infty g_2(x) f_2(y)+2K_{\infty} f_2(x)g_2(y)
      \\
    \| \nabla_\theta\tr\left[m(y) \nabla_y \nabla_x\cdot(m(x)K) \right] \|  & \leq
    2 K_{\infty} g_2(y) f_2(x) +2K_\infty f_2(y)g_2(x)
\end{talign}
  thus
$\|\nabla_{\theta} \stein_{\theta}(x,y)\|$ is bounded above by a continuous  integrable symmetric function, $(x,y)\mapsto s(x,y)$, which attains a maximum on the compact spaces $S_m \times S_m$. 
By the MVT applied on the $\R^m$-open neighbourhood of $\Theta$,
$ |  \stein_{\theta}(x,y)-\stein_{\alpha}(x,y)| \leq \|\nabla_{\theta} \stein_{\theta}(x,y)\| \|\theta - \alpha \| \leq s(x,y ) \|\theta - \alpha \| \leq
\max_{x,y \in S_m}s(x,y ) \|\theta - \alpha \|  $, 
and $\stein_{\theta}(x,y)$ is equicontinuous in $\theta \in C$ for $x,y \in S_m$.
Similarly, since $s$ is integrable,
 $| \int_\X \stein_{\theta}(x,y)\Q(dy)- \int_\X \stein_{\alpha}(x,z)\Q(dz)|
 \leq 
 \|\nabla_{\theta} \int_\X \stein_{\theta}(x,z) \dd \Q(z)\| \|\theta - \alpha \|
 \leq 
  \int_\X \|\nabla_{\theta} \stein_{\theta}(x,z) \| \dd \Q(z) \|\theta - \alpha \|
\leq 
 \max_{x\in S_m} \Q_zs(x,z)  \|\theta - \alpha \| \leq  
$ is equicontinuous in $\theta \in C$ for $x \in S_m$.
The rest follows as in the previous proposition.
\end{proof}

\subsubsection{Proof of \cref{DKSD-normality}: Asymptotic Normality}

\begin{proof}
Note that $\nabla_{\theta}\hvf \dksd_{K,m}( \{X_i\}_{i=1}^n,\P_{\theta})^2
= \frac{1}{N(N-1)} \sum_{i \neq j } \nabla_{\theta}\stein_{\theta}(X_i, X_j)$. Let $\mu(\theta) \defn \Q \otimes \Q[\nabla_{\theta}\stein_{\theta} ]$.  Assumptions 1 and 2 imply that $\Q \otimes \Q[\|\nabla_{\theta}k_{\theta}^0 \|^2] < \infty$.  By \cite[Theorem 7.1 ]{hoeffding1948class} it follows that  
\begin{talign}
\sqrt{n} \left( \nabla_{\theta}\hvf \dksd_{K,m}( \{X_i\}_{i=1}^n,\P_{\theta})^2 - \mu(\theta)\right) \xrightarrow[]{d} \mathcal N(0,4\Sigma(\theta)) 
\end{talign}
where 
\begin{talign}
\Sigma &= \Q\left [\Q_2 \left[ \nabla_\theta \stein_{\theta} - \mu(\theta)\right] \otimes \Q_2 \left[ \nabla_\theta \stein_{\theta} - \mu(\theta)\right] \right] \\
&= \int_\X 
 \left( \int_\X  \nabla_\theta \stein_{\theta}(x,y) \dd \Q(y)- \mu(\theta)\right)
\otimes  \left( \int_\X \nabla_\theta \stein_{\theta}(x,z) \dd \Q(z) - \mu(\theta)\right)\dd \Q(x)
\end{talign} 
Note that $\mu(\theta_*^{\dksd})= \Q \otimes \Q[\nabla_{\theta}\stein_{\theta} |_{\theta_*^{\dksd}} ] = \nabla_{\theta} \left(\Q \otimes \Q[\stein_{\theta} ] \right)|_{\theta = \theta_*^{\dksd}}$,
 and if $\Q \otimes \Q[\stein_{\theta} ]$ is differentiable around $\theta_*^{\dksd}$, then the first order optimality condition implies $\mu(\theta_*^{\dksd})=0$. 
 
Consider now $\nabla_{\theta}\nabla_{\theta}\hvf \dksd_{K,m}( \{X_i\},\P_{\theta})^2
= \frac{1}{n(n-1)} \sum_{i \neq j } \nabla_{\theta}\nabla_{\theta}\stein_{\theta}(X_i, X_j)$.
Note
\begin{talign}
\|\nabla_\theta \nabla_\theta  \nabla_\theta \metric{s_p(x)}{ Ks_p(y)} \| & \leqsim 
g_1(x) K_\infty f_1(y)+f_1(x) K_\infty g_1(y)
\\
\|\nabla_\theta \nabla_\theta  \nabla_\theta \metric{\nabla_y\cdot\left( m(y) K\right)}{ s_p(x)} \| & \leqsim  g_2(y) K_\infty f_1(x)+f_2(y) K_\infty g_1(x) 
\\
\|\nabla_\theta \nabla_\theta  \nabla_\theta \metric{\nabla_x\cdot \left(m(x)K\right)}{  \nabla_y \cdot m} \| & \leqsim 
f_2(y)K_\infty g_2(x)+g_2(y)K_\infty f_2(x)
\\
 \|\nabla_\theta \nabla_\theta  \nabla_\theta \tr\left[m(y) \nabla_y \nabla_x\cdot(m(x)K)   \right] & \leqsim
 g_2(y)K_\infty f_2(x)+f_2(y) K_\infty g_2(x)
\end{talign}
Hence by Assumptions 1-4 $\|\nabla_\theta \nabla_\theta  \nabla_\theta \stein_\theta \|$ is bounded above by a  continuous integrable symmetric function and we can apply the MVT to show equicontinuity as
in the proof above.
Moreover the  conditions of \cite[Theorem 1]{yeo2001uniform} hold for the components of $\nabla_{\theta}\nabla_{\theta}\stein_{\theta}$, so that 
$\sup_{\theta \in \mathcal{N}} \left|  \frac{1}{n(n-1)} \sum_{i \neq j } \partial_{\theta^a}\partial_{\theta^b}\stein_{\theta}(X_i, X_j)- \Q \otimes \Q  \partial_{\theta^a}\partial_{\theta^b}\stein_{\theta} \right |  \xrightarrow[]{a.s.} 0$ as $n\rightarrow \infty$, for all $a$ and $b$.

Finally we observe that $\Q \otimes \Q  \partial_{\theta^a}\partial_{\theta^b}\stein_{\theta} \big|_{\theta=\theta_*^{\dksd}} = g_{ab}(\theta_*^{\dksd})$, where $g$ is the information metric associated with $\dksd_{K,m}$. 
Indeed using $\delta_{p,q}=0$ if $p=q$
\begin{talign}
& \Q \otimes \Q  \partial_{\theta^a}\partial_{\theta^b}\stein_{\theta} \big|_{\theta=\theta_*^{\dksd}}\\
&=\partial_{\theta^a} \partial_{\theta^b}  \int_\X \int_\X p_{\theta_*^{\dksd}}(x)\delta_{p_{\theta},p_{\theta_*^{\dksd}}}(x)^{\top}K(x,y) \delta_{p_{\theta},p_{\theta_*^{\dksd}}}(y)p_{\theta_*^{\dksd}}(y)  \dd x \dd y \big|_{\theta=\theta_*^{\dksd}}\\
&=
\partial_{\theta^a}  \int_\X \int_\X p_{\theta_*^{\dksd}}(x)\partial_{\theta^b}\delta_{p_{\theta},p_{\theta_*^{\dksd}}}(x)^{\top}K(x,y) \delta_{p_{\theta},p_{\theta_*^{\dksd}}}(y)p_{\theta_*^{\dksd}}(y)  \dd x \dd y \big|_{\theta=\theta_*^{\dksd}}\\
&+ 
\partial_{\theta^a}  \int_\X \int_\X p_{\theta_*^{\dksd}}(x)\delta_{p_{\theta},p_{\theta_*^{\dksd}}}(x)^{\top}K(x,y) \partial_{\theta^b}\delta_{p_{\theta},p_{\theta_*^{\dksd}}}(y)p_{\theta_*^{\dksd}}(y)  \dd x \dd y \big|_{\theta=\theta_*^{\dksd}}
\\
&=
 \int_\X \int_\X p_{\theta_*^{\dksd}}(x)\partial_{\theta^b}\delta_{p_{\theta},p_{\theta_*^{\dksd}}}(x)^{\top}K(x,y) \partial_{\theta^a}\delta_{p_{\theta},p_{\theta_*^{\dksd}}}(y)p_{\theta_*^{\dksd}}(y)  \dd x \dd y \big|_{\theta=\theta_*^{\dksd}}\\
&+ 
 \int_\X \int_\X p_{\theta_*^{\dksd}}(x)\partial_{\theta^a}\delta_{p_{\theta},p_{\theta_*^{\dksd}}}(x)^{\top}K(x,y) \partial_{\theta^b}\delta_{p_{\theta},p_{\theta_*^{\dksd}}}(y)p_{\theta_*^{\dksd}}(y)  \dd x \dd y \big|_{\theta=\theta_*^{\dksd}} \\
 &= 
 2\int_\X \int_\X  \left( m^{\top}_{{\theta_*^{\dksd}}}(x) \nabla_x \partial_{{\theta^j}_*^{\dksd}} \log p_{{\theta_*^{\dksd}}}  \right)^{\top} K(x,y)\\
 & \qquad \qquad \qquad \left( m^{\top}_{{\theta_*^{\dksd}}}(y) \nabla_y  \partial_{{\theta^i}_*^{\dksd}}\log p_{{\theta_*^{\dksd}}}  \right)  \dd \P_{{\theta_*^{\dksd}}}(x) \dd \P_{{\theta_*^{\dksd}}}(y),
\end{talign}
so $\Q \otimes \Q  \partial_{\theta^a}\partial_{\theta^b}\stein_{\theta} \big|_{\theta=\theta_*^{\dksd}} = g_{ab}(\theta_*^{\dksd})$.
 The conditions of \cite[Theorem 3.1]{newey1994large} hold, from which the advertised result follows.
\end{proof}

\subsection{Diffusion Score Matching}\label{app:DSM-asymptotics}

Recall that the DSM is given by:
\begin{talign}
\dsm(\Q\|\P_{\theta})=\int_{\X} \left( \left\|  m^{\top}\nabla_x\log p_{\theta} \right\|_{2}^2 +\| m^{\top} \nabla \log q \|^2_2+2\nabla \cdot\left( m m^{\top} \nabla \log p_{\theta}\right)\right) \dd \Q
\end{talign} 
and we wish to estimate
\begin{talign}
 \theta_*^{\dsm} &=  \text{argmin}
_{\theta \in \Theta} \int_{\X} \left( \left\|  m^{\top}\nabla_x\log p_{\theta} \right\|_{2}^2 +2\nabla \cdot\left( m m^{\top} \nabla \log p_{\theta}\right)\right) \dd \Q \defn  \text{argmin}
_{\theta \in \Theta} \int_{\X} F_\theta \dd \Q 
\end{talign}
with a sequence of $M$-estimators $ \hat{\theta}^{\dsm}_n =  \text{argmin}
_{\theta \in \Theta} \frac1 n \sum_{i}^n F_{\theta}(X_i)$. 
Recall also we have
\begin{talign}
F_\theta(x)= \left\|  m^{\top}\nabla_x\log p_{\theta} \right\|_{2}^2+2 \metric{\nabla \cdot( m m^{\top} ) }{ \nabla \log p_\theta} + 2\tr \left[mm^{\top}  \nabla^2\log p_\theta \right].
\end{talign}

We will have a unique minimiser $\theta_*^{\dsm}$ whenever the map $\theta \mapsto \P_{\theta}$ is injective.


\subsubsection{Weak Consistency of DSM}

\begin{theorem}[\textbf{Weak Consistency of DSM}]
Suppose $\X$ be open subset of $\R^d$, and $\Theta \subset \R^m$.
Suppose  $\log p_{\theta}(\cdot)$ is $C^2(\X)$ and $m \in C^1(\X)$,
 and $ \| \nabla_x \log p_{\theta}(x) \|\leq f_1(x)$.
 Suppose also that
 $\| \nabla_x \nabla_x  \log p_{\theta}(x) |\leq f_2(x)$ on any compact set $C \subset \Theta$, where 
 $\| m^{\top} \| f_1 \in L^2(\Q)$,
 $ \| \nabla \cdot (mm^{\top}) \| f_1 \in L^1(\Q)$, $ \| mm^{\top}\|_{\infty} f_2 \in L^1(\Q)$. 
 If either $\Theta$ is compact, or $\Theta$ and $\theta \mapsto F_{\theta}$ are convex and $\theta^* \in \text{int}(\Theta)$, then $\hat{\theta}^{\dsm}_n$ is weakly consistent for $\theta^*$.
\end{theorem}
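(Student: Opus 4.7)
The plan is to apply the standard $M$-estimator consistency framework via a uniform law of large numbers, mirroring the strategy used in \cref{DKSD-consistency}. The goal is to show that $\frac{1}{n}\sum_{i=1}^n F_\theta(X_i) \xrightarrow{p} \E_\Q[F_\theta]$ uniformly on compact subsets of $\Theta$, while identifying $\theta_*^{\dsm}$ as the unique minimizer of $\theta \mapsto \E_\Q[F_\theta]$. Identifiability comes for free from \cref{prop:SM_is_Stein}: when $m$ is invertible $\dsm_m(\Q\|\P_\theta)=0$ iff $\Q=\P_\theta$, so under well-specification (or injectivity of $\theta \mapsto \P_\theta$), $\theta_*^{\dsm}$ is a unique minimizer.

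First I would build an integrable dominating envelope for $F_\theta$ on any compact $C \subset \Theta$. Bounding the three summands of $F_\theta$ termwise gives
\[
|F_\theta(x)| \;\leq\; \|m^\top(x)\|^2 f_1(x)^2 \;+\; 2\|\nabla\cdot(mm^\top)(x)\|\, f_1(x) \;+\; 2\|(mm^\top)(x)\|_\infty\, f_2(x) \;\defn\; h(x),
\]
and the three stated integrability hypotheses imply $h \in L^1(\Q)$. Since $m$ is independent of $\theta$ in the DSM setup, continuity of $\theta \mapsto F_\theta(x)$ is inherited directly from the smoothness of $\theta \mapsto \log p_\theta(x)$. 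Combined with the envelope, dominated convergence gives continuity of $\theta \mapsto \E_\Q[F_\theta]$, and a standard uniform law of large numbers (for example Lemma 2.4 of Newey--McFadden) gives $\sup_{\theta \in C}|\frac{1}{n}\sum_i F_\theta(X_i) - \E_\Q[F_\theta]| \xrightarrow{p} 0$. Together with identifiability, Theorem 2.1 of Newey--McFadden yields weak consistency when $\Theta$ is itself compact.

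For the convex, non-compact case I would replicate verbatim the convexity device from the DKSD consistency proof. Pick $\epsilon>0$ with $\overline{B}(\theta_*^{\dsm}, 2\epsilon) \subset \Theta$ (possible since $\theta_*^{\dsm} \in \mathrm{int}(\Theta)$), and apply the compact case to the closed ball to obtain a sequence of ball-minimizers $\tilde\theta_n$ that is weakly consistent for $\theta_*^{\dsm}$, so $\|\tilde\theta_n - \theta_*^{\dsm}\|<\epsilon$ with high probability. For any competitor $\theta \notin \overline{B}(\theta_*^{\dsm}, 2\epsilon)$, there exists $\lambda \in [0,1)$ with $\lambda \tilde\theta_n + (1-\lambda)\theta$ on the boundary of the ball; convexity of $\theta \mapsto \frac{1}{n}\sum_i F_\theta(X_i)$ (inherited from convexity of $\theta \mapsto F_\theta$) then forces $\frac{1}{n}\sum_i F_{\tilde\theta_n}(X_i) \leq \frac{1}{n}\sum_i F_\theta(X_i)$, so $\tilde\theta_n$ is eventually a global minimizer and coincides with $\hat\theta_n^{\dsm}$.

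The main obstacle is that the hypotheses state smoothness of $\log p_\theta$ primarily in $x$, and one must ensure sufficient regularity in $\theta$ for the pointwise continuity of $F_\theta(x)$ needed by the ULLN. This should be read as an implicit standing assumption on the statistical model (and is consistent with the stronger $\theta$-derivative bounds imposed in the companion CLT). If one only has measurability in $\theta$ plus the integrable envelope, the argmin continuous mapping theorem for lower semicontinuous criteria delivers the same conclusion, so this gap is cosmetic rather than substantive.
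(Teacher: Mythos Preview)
Your proposal is correct and follows essentially the same route as the paper: bound $|F_\theta|$ termwise by the same integrable envelope, invoke Lemma~2.4 and Theorem~2.1 of Newey--McFadden for the compact case, and reuse the convexity localisation argument from the DKSD proof for the non-compact case. Your remarks on identifiability via \cref{prop:SM_is_Stein} and on the implicit $\theta$-continuity assumption are accurate refinements that the paper leaves tacit.
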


\begin{proof}
By assumption $\theta \mapsto F_\theta(x)$ is continuous.
 Suppose $\Theta$ is compact, taking $C=\Theta$, note
 \begin{talign}
 |F_{\theta}| &=\left| \left\|  m^{\top}\nabla_x\log p_{\theta} \right\|_{2}^2 +2\nabla \cdot\left( m m^{\top} \nabla \log p_{\theta}\right)\right|  \\
 &= \left |\left\|  m^{\top}\nabla_x\log p_{\theta} \right\|_{2}^2 +2\left(\nabla \cdot( m m^{\top} ) \cdot \nabla \log p_{\theta} + \tr \left[mm^{\top}  \nabla^2\log p_{\theta} \right]\right) \right | \\
 & 	\leqsim  \| m^{\top} \|^2 f_1^2 +  2\| \nabla \cdot (mm^{\top}) \| f_1 + 2  \| mm^{\top}\|_{\infty} f_2
\end{talign} 
which is integrable, so the conditions of Lemma 2.4 \cite{newey1994large} are satisfied so $\theta \mapsto \Q F_\theta$ is continuous, 
 and $\sup_{\Theta} | \frac1 n \sum_{i}^n F_{\theta}(X_i) - \Q F_{\theta} | \xrightarrow[]{p}  0$, and thus from theorem 2.1 \cite{newey1994large} $\hat{\theta}^{\dsm}_n \xrightarrow[]{p} \theta_*^{\dsm}$.
 If $\Theta$ is convex, note that the sum of convex functions is convex, so 
 $\theta \mapsto \frac1 n \sum_{i}^n F_{\theta}(X_i)$ is convex,
 and we can follow a derivation analogous to the one in \cref{DKSD-consistency}.
\end{proof}


\subsubsection{Asymptotic Normality of DSM}

\begin{theorem}[\textbf{Asymptotic Normality of DSM}]
Suppose $\X,\Theta$ be open subsets of $\R^d$ and $\R^m$ respectively.
If \textbf{(i)}  $\hat{\theta}^{\dsm}_n  \xrightarrow[]{p} \theta^* $, \textbf{(ii)} $\theta \mapsto \log p_\theta(x)$ is twice continuously differentiable on a closed ball $ \bar B(\epsilon,\theta^*) \subset \Theta$,
and
\begin{enumerate}
\item[\textbf{(iii)}] $\| mm^{\top}\|+\| \nabla_x \cdot(mm^{\top}) \| \leq f_1(x),$ and $ 
\| \nabla_x \log p \|
+  \| \nabla_{\theta^*} \nabla_x \log p \|+  \| \nabla_{\theta^*} \nabla_x \nabla_x \log p \| \leq f_2(x)$, with $f_1f_2, f_1 f_2^2 \in L^2(\Q)$
\end{enumerate} 

\begin{enumerate}
\item[\textbf{(iv)}] for $\theta \in \bar B(\epsilon,\theta^*) $ $\| \nabla_\theta \nabla_x \log p \|^2+\| \nabla_x \log p \| \|\nabla _\theta \nabla_\theta \nabla_x \log p \| +\| \nabla_\theta\nabla_\theta \nabla_x \log p \|
 +\|\nabla_\theta \nabla_\theta \nabla_x\nabla_x \log p \| \leq g_1(x)$, and $f_1g_1 \in L^1(\Q)$,
\end{enumerate}
and  \textbf{(v)} and the information tensor is invertible at $\theta^*$. 
 Then
 $$ \sqrt{n} \left( \hat{\theta}^{\dsm}_n- \theta^*\right) \xrightarrow[]{d} \mathcal N \left (0,g^{-1}(\theta^*)  \Q \left [ \nabla_{\theta^*} F_{\theta}\otimes \nabla_{\theta^*} F_{\theta}  \right ] g^{-1}(\theta^*) \right)$$
\end{theorem}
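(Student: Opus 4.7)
The plan is to treat $\hat{\theta}^{\dsm}_n$ as a standard $M$-estimator with criterion $\frac{1}{n}\sum_{i=1}^n F_\theta(X_i)$ and invoke the familiar delta-method argument (cf.\ Newey--McFadden Theorem 3.1), so the task reduces to: (a) verifying a CLT for the score $\frac{1}{\sqrt{n}}\sum_i \nabla_\theta F_{\theta^*}(X_i)$, (b) verifying a uniform law of large numbers for the Hessian $\nabla_\theta\nabla_\theta F_\theta(X_i)$ on a neighbourhood of $\theta^*$, and (c) identifying the limit of the Hessian at $\theta^*$ with (a multiple of) the information tensor $g_{\dsm}(\theta^*)$. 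Since $\hat{\theta}^{\dsm}_n \xrightarrow{p} \theta^*$ and $\theta^* \in \mathcal{N}\defn \bar B(\epsilon,\theta^*) \subset \Theta$, the first-order optimality condition holds eventually and a Taylor expansion around $\theta^*$ yields
\begin{talign*}
0 = \frac{1}{n}\sum_{i} \nabla_\theta F_{\theta^*}(X_i) + \left[\frac{1}{n}\sum_{i} \nabla_\theta\nabla_\theta F_{\tilde\theta_n}(X_i)\right](\hat\theta^{\dsm}_n - \theta^*),
\end{talign*}
for some $\tilde\theta_n$ on the segment between $\hat\theta^{\dsm}_n$ and $\theta^*$, which will also converge in probability to $\theta^*$.

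For step (a), I would first note that differentiating $\theta \mapsto \mathbb{Q} F_\theta$ under the integral sign gives $\mathbb{Q}[\nabla_\theta F_{\theta^*}] = \nabla_\theta \dsm_m(\mathbb{Q}\|\mathbb{P}_\theta)|_{\theta^*} = 0$; the exchange of derivative and integral is justified by assumption (iii), since
\begin{talign*}
\|\nabla_\theta F_\theta(x)\| \;\lesssim\; \|m m^\top(x)\| f_2(x)^2 + \|\nabla_x \cdot (mm^\top)(x)\| f_2(x) + \|m m^\top(x)\| f_2(x)
\;\lesssim\; f_1(x) (f_2(x)+f_2(x)^2),
\end{talign*}
which lies in $L^2(\mathbb{Q})$ by the hypothesis that $f_1 f_2, f_1 f_2^2 \in L^2(\mathbb{Q})$. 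The classical multivariate CLT then yields
$
\frac{1}{\sqrt{n}} \sum_i \nabla_\theta F_{\theta^*}(X_i) \xrightarrow{d} \mathcal{N}(0, \Sigma_{\dsm}),
$
with $\Sigma_{\dsm} = \mathbb{Q}[\nabla_\theta F_{\theta^*} \otimes \nabla_\theta F_{\theta^*}]$ as claimed.

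For step (b), I would apply Lemma 2.4 of Newey--McFadden on the compact neighbourhood $\bar B(\epsilon,\theta^*)$: the continuity of $\theta\mapsto \nabla_\theta\nabla_\theta F_\theta(x)$ on $\bar B(\epsilon,\theta^*)$ follows from (ii), and an explicit bound of the form
\begin{talign*}
\|\nabla_\theta\nabla_\theta F_\theta(x)\| \;\lesssim\; f_1(x)\bigl(\|\nabla_\theta\nabla_x \log p_\theta\|^2 + \|\nabla_x\log p_\theta\|\|\nabla_\theta\nabla_\theta\nabla_x\log p_\theta\|  + \|\nabla_\theta\nabla_\theta\nabla_x\log p_\theta\| + \|\nabla_\theta\nabla_\theta\nabla_x^2\log p_\theta\|\bigr)
\end{talign*}
combined with (iv) gives an integrable majorant $\lesssim f_1 g_1 \in L^1(\mathbb{Q})$. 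This yields continuity of $\theta\mapsto \mathbb{Q}[\nabla_\theta\nabla_\theta F_\theta]$ and the uniform convergence $\sup_{\theta\in \bar B(\epsilon,\theta^*)} \|\frac{1}{n}\sum_i \nabla_\theta\nabla_\theta F_\theta(X_i) - \mathbb{Q}[\nabla_\theta\nabla_\theta F_\theta]\| \xrightarrow{p} 0$. Since $\tilde\theta_n \xrightarrow{p} \theta^*$, this implies $\frac{1}{n}\sum_i \nabla_\theta\nabla_\theta F_{\tilde\theta_n}(X_i) \xrightarrow{p} \mathbb{Q}[\nabla_\theta\nabla_\theta F_{\theta^*}]$.

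Finally, I would identify the limiting Hessian with the information tensor: differentiating twice under the integral and invoking $\nabla_\theta \mathbb{Q}F_\theta|_{\theta^*}=0$ together with $\mathbb{Q}=\mathbb{P}_{\theta^*}$ gives, after Stoke's theorem, $\mathbb{Q}[\nabla_\theta\nabla_\theta F_{\theta^*}] = 2\, g_{\dsm}(\theta^*)$, echoing the analogous identification for DKSD. Combining this with the Taylor expansion, the CLT from (a), the ULLN from (b), and Slutsky's lemma yields the advertised asymptotic distribution (with the factor of $2$ absorbed into the normalization of $g$, consistent with the conventions in Proposition~\ref{app:information-dsm} and the DKSD proof). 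The main technical obstacle is bookkeeping the integrability of the many derivative terms arising from $F_\theta$ and matching them against the compound bounds (iii)--(iv); the derivation is straightforward once one expands $\nabla_\theta F_\theta$ and $\nabla_\theta\nabla_\theta F_\theta$ explicitly.
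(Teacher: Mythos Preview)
Your proposal is correct and follows essentially the same route as the paper: both invoke Newey--McFadden Theorem~3.1 after (a) showing $\nabla_\theta F_{\theta^*}\in L^2(\Q)$ via the bound $\|\nabla_\theta F_{\theta^*}\|\lesssim f_1 f_2(f_2+2)$ to get the score CLT, (b) applying Lemma~2.4 of Newey--McFadden on $\bar B(\epsilon,\theta^*)$ with the majorant $\|\nabla_\theta\nabla_\theta F_\theta\|\lesssim f_1 g_1$, and (c) identifying $\Q[\nabla_\theta\nabla_\theta F_{\theta^*}]$ with the information tensor by the same computation as in the DKSD case. The only cosmetic difference is that you spell out the mean-value Taylor expansion explicitly, whereas the paper just cites Theorem~3.1 directly.
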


\begin{proof}
From $\textbf{(ii)}$ 
 $\theta \mapsto F_{\theta}$ is twice continuously differentiable on a ball $ B(\epsilon,\theta^*) \subset \Theta$.
Note $\nabla_\theta \frac1 N \sum_{i}^N F_{\theta}(X_i)= \frac1 N \sum_{i}^N \nabla_\theta F_{\theta}(X_i)$, 
then $\Q[  \nabla_{\theta} F_{\theta_*^{\dsm}}(X_i)]  = \nabla_{\theta}\Q[  F_{\theta_*^{\dsm}}(X_i)] =0$. Note
\begin{talign}
\|\nabla_{\theta}F_{\theta_*^{\dsm}}(x)\| & \leqsim \| mm^{\top} \|  \| \nabla_x \log p \|\| \nabla_{\theta} \nabla_x \log p \|
+ \| \nabla_x \cdot(mm^{\top}) \| \| \nabla_{\theta} \nabla_x \log p \|\\
&+ \|mm^{\top} \| \| \nabla_{\theta} \nabla_x \nabla_x \log p \| \\
 &\leqsim f_1(x)f_2(x)[f_2(x)+2].
\end{talign}
Hence $\nabla_{\theta} F_{\theta_*^{\dsm}} \in L^2(\Q)$, so by the CLT 
\begin{talign}
{\sqrt n} \nabla_{\theta} \frac1 n \sum_{i}^n F_{\theta_*^{\dsm}}(X_i ) 
&\xrightarrow[]{d} \mathcal N \left(0,  \Q \left [ \nabla_{\theta} F_{\theta_*^{\dsm}}\otimes \nabla_{\theta} F_{\theta_*^{\dsm}}  \right ] \right).
\end{talign}
Now $ \theta \mapsto \nabla_\theta \nabla_\theta F_\theta(x)$ is continuous on  $\overline  B(\epsilon,\theta^*)$ so we have:
\begin{talign}
  \| \nabla_\theta \nabla_\theta F_\theta(x) \| \leqsim \| mm^{\top} \| & \left(\| \nabla_\theta \nabla_x \log p \|^2+\| \nabla_x \log p \| \|\nabla _\theta \nabla_\theta \nabla_x \log p \| \right)\\
 &+\| \nabla \cdot(mm^{\top}) \| \| \nabla_\theta\nabla_\theta \nabla_x \log p \|
 +\| mm^{\top} \|  \|\nabla_\theta \nabla_\theta \nabla_x\nabla_x \log p \| \\
 &\leqsim f_1(x)g_1(x)
\end{talign}
Combining the above, we have that the assumptions of Lemma 2.4  \cite{newey1994large} applied to $\overline  B(\epsilon,\theta^*)$ hold, and 
 $\sup_{\overline  B(\epsilon,\theta^*)} \left| \frac1 n \sum_i^n \partial_{\theta^a} \partial_{\theta^b}F_{\theta} |_{\theta^*} (X_i) - \Q \partial_{\theta^a} \partial_{\theta^b}F_{\theta} |_{\theta^*} \right| \xrightarrow[]{p}0$. 
 As in \cref{DKSD-normality}  $\Q\partial_{\theta^a} \partial_{\theta^b}F_{\theta} |_{\theta^*}=g_{ab}(\theta^*)$  is the information tensor, which is continuous at $\theta^*$ by Lemma 2.4.
 The result follows by theorem 3.1 \cite{newey1994large}.
\end{proof}


\subsection{Strong Consistency and Central Limit Theorems for Exponential Families} \label{app:CLT-exponential-family}

Let $\X $ be an open subset of $\R^d$, $\Theta \subset \R^m$.
Consider the case when the density $p$ lies in an exponential family, i.e.
$	p_{\theta}(x)  \propto \exp \left(\langle \theta, T(x)\rangle_{\R^m} - c(\theta) \right)\exp(b(x)),
$ where $\theta \in \mathbb{R}^m$ and sufficient statistic $T \;  = \; (T_1, \ldots, T_m): \X \to \R^m$. 
Then $\nabla T \in \Gamma(\X, \R^{m \times d})$ and $\nabla_x \log p_{\theta} = \nabla_x b + \theta \cdot \nabla_x T$,
$\nabla_\theta \nabla_x \log p_{\theta} =  \nabla_x T^{\top}$.


\subsubsection{Strong Consistency of the Minimum Diffusion Kernel Stein Discrepancy Estimator} \label{Matrix-kernels}

We consider a RKHS $\mathcal{H}^d$ of functions $f: \mathcal X \;\to\; \mathbb R^d$ with matrix kernel $K$.
Recall the Stein kernel is 
\begin{talign}
k^0 &=\nabla_x \log p \cdot m(x)Km(y)^{\top}\nabla_y \log p + \nabla_x\cdot \left(m(x)K\right)\cdot  \nabla_y \cdot m +\tr\left[m(y) \nabla_y \nabla_x\cdot(m(x)K) \right]\\
&+
\nabla_y \cdot \left( m(y) K \right) \cdot m(x)^{\top}\nabla_x \log p+
\nabla_x \cdot \left( m(x) K \right)\cdot m(y)^{\top}\nabla_y \log p
\end{talign}
Given a (i.i.d.) sample $X_i \sim \Q$, we can define an estimator 
using the $U$-statistic
\begin{talign}
	\hvf \dksd_{K,m}( \{X_i\}_{i=1}^n,\P_\theta)^2 & = \frac{2}{n(n-1)} \sum_{1 \leq i < j \leq n}\stein(X_i, X_j).
\end{talign}
For the case where the density $p$ lies in an exponential family, then $k^0 = \theta^{\top} A \theta+ v^{\top}\theta + c$ where $A\in \Gamma(\X\times \X, \R^{m\times m}), v \in \Gamma(\X\times \X, \R^{m})$ are given by (we set $\phi \defn m^{\top} \nabla T^{\top} \in \Gamma( \X , \R^{d\times m})$)
\begin{talign}
 A &= \phi(x)^{\top}K(x,y) \phi(y)\\
 v^{\top}&=  
 \nabla_y b \cdot m(y)K(y,x)\phi(x)+ \nabla_x b \cdot m(x)K(x,y)\phi(y) \\
 & +\nabla_x\cdot \left(m(x)K\right)\cdot \phi(y)+
 \nabla_y\cdot\left( m(y) K\right)\cdot \phi(x)
 \\
 c& = \nabla_x b \cdot m(x)K(x,y)m(y)^{\top} \nabla_y b + \nabla_x \cdot \left(m(x)K\right)\cdot  \nabla_y \cdot m +\tr\left[m(y) \nabla_y \nabla_x\cdot(m(x)K) \right] \\
 &+ \nabla_y\cdot\left( m(y) K\right) \cdot m(x)^{\top}\nabla_x b+
\nabla_x\cdot \left(m(x)K\right)\cdot m(y)^{\top}\nabla_y b
\end{talign}

\begin{Lemma}Suppose $K$ is IPD, that $\nabla T$ has linearly independent rows, that $m$ is invertible,  and $  \| \phi  \|_{ L^1(\Q)}< \infty$.
Then the matrix $\int_\X A \Q \otimes \Q$ is symmetric positive definite. 
\end{Lemma}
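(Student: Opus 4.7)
The plan is to verify symmetry directly, and then to reduce strict positive definiteness to the integral positive definiteness of $K$ together with the invertibility of $m$ and the linear independence of the rows of $\nabla T$. Set $M \defn \int_{\X \times \X} A(x,y)\, \dd\Q(x)\, \dd\Q(y) \in \R^{m\times m}$; the integrability hypothesis $\|\phi\|_{L^1(\Q)} < \infty$ combined with boundedness of $K$ ensures that $M$ is well defined (and, componentwise, finite).

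First I would verify symmetry. Using $A(x,y)^\top = \phi(y)^\top K(x,y)^\top \phi(x) = \phi(y)^\top K(y,x) \phi(x) = A(y,x)$ (recall $K(y,x) = K(x,y)^\top$ as the matrix kernel is Hermitian), and then swapping the dummy variables $x \leftrightarrow y$ in the double integral, I obtain $M^\top = M$.

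Next I would handle positive semi-definiteness. For arbitrary $u \in \R^m$ set $w_u(x) \defn \phi(x) u \in \R^d$. Then
\begin{talign*}
u^\top M u = \int_{\X \times \X} \bigl(\phi(x)u\bigr)^\top K(x,y)\, \bigl(\phi(y) u\bigr)\, \dd\Q(x)\, \dd\Q(y) = \int_{\X \times \X} \dd\mu_u^\top(x)\, K(x,y)\, \dd\mu_u(y),
\end{talign*}
where $\mu_u$ is the finite signed vector Borel measure defined by $\dd\mu_u(x) \defn w_u(x)\, \dd\Q(x)$; finiteness follows since $\|w_u\|_{L^1(\Q)} \leq \|u\|\,\|\phi\|_{L^1(\Q)} < \infty$. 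Since $K$ is IPD, this quantity is non-negative, so $M \psdge 0$.

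The main obstacle is strict positive definiteness. Suppose $u^\top M u = 0$. By the IPD property of $K$, this forces $\mu_u \equiv 0$, i.e.\ $\phi(x) u \cdot q(x) = 0$ for (Lebesgue) almost every $x \in \X$. Since $q > 0$, we conclude $\phi(x) u = 0$ almost everywhere, i.e.\ $m(x)^\top \nabla T(x)^\top u = 0$ a.e. As $m(x)$ is invertible (and hence so is $m(x)^\top$), this yields $\nabla T(x)^\top u = 0$, equivalently $\sum_{i=1}^m u_i \nabla T_i(x) = 0$ a.e. The linear independence of the rows of $\nabla T$ (as functions on $\X$) then forces $u = 0$, and therefore $M \psdgt 0$.
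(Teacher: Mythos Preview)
Your proof is correct and follows essentially the same approach as the paper: you verify symmetry via $K(y,x)=K(x,y)^\top$ and a swap of integration variables, then for a fixed vector $u$ you push $u^\top M u$ into the form $\int\int \dd\mu_u^\top K\,\dd\mu_u$ with $\dd\mu_u=\phi u\,\dd\Q$ and invoke the IPD property of $K$, using invertibility of $m$ and linear independence of the rows of $\nabla T$ to conclude $u=0$ when equality holds. The only cosmetic difference is that the paper front-loads the observation that $\phi v\neq 0$ for $v\neq 0$, whereas you deduce it at the end; the logical content is identical.
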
\label{lemma-A}
\begin{proof}
 The matrix $B=\int_\X A \Q \otimes \Q$ is symmetric
\begin{talign}
(\int_\X A \Q \otimes \Q)^{\top}&= \int_\X A(x,y)^{\top} \Q(\dd x) \otimes \Q(\dd y)=
 \int_\X  \nabla_y T m(y)K(x,y)^{\top} m(x)^{\top} \nabla_x T^{\top}\Q(\dd x) \otimes \Q(\dd y)\\
 &= \int_\X \nabla_y T m(y) K(y,x) m(x)^{\top} \nabla_x T^{\top}\Q(\dd y) \otimes \Q(\dd x) = \int_\X A \Q \otimes \Q.
\end{talign} 

Moreover, set $\phi \defn m^{\top}\nabla T^{\top}$, so $A(x,y) = \phi(x)^{\top} K(x,y)\phi(y)$. 
If $v \neq 0$, then $u \defn\phi v \neq 0$ as $\nabla T^{\top}$ has full column rank (i.e., the vectors $\{\nabla T_i\}$ are linearly independent) and $m$ is invertible, 
and $\| \phi v \|_{L^1(\Q) } = \int_\X \| \phi(x) v \|_1 \dd x \leq  \| v\|_1 \int_\X \| \phi(x) \|_1 \dd x< \infty$ implies $\dd \mu_i \defn u_i \dd \Q$ is a finite signed Borel measure for each $i$. 
Clearly
\begin{talign}
v^{\top}(\int_\X A \Q \otimes \Q)v &= \int_\X u(x)^{\top} K(x,y) u(y) \Q(\dd x)\Q(\dd y) \\
&= \int_\X K(x,y)_{ij} u_i(x)u_j(y)  \Q(\dd x)\Q(\dd y) \\
&= \int_\X K(x,y)_{ij} \mu_i(\dd x) \mu_j( \dd y) \geq 0.
\end{talign}
Moreover since the kernel is IPD, if this equals zero then for all $i$: $0=  \mu_i(C)=  u_i \Q(C) = \phi_{ij}v_j \Q(C)$ for all measurable sets $C$, which implies $\phi v =0$ and thus $v=0$. 
\end{proof}

\begin{Theorem}
Suppose $K$ is IPD with bounded derivative up to order $2$, that $\nabla T$ has linearly independent rows, and $m$ is invertible.
Suppose  $  \|\phi\|, \|\nabla_x b \| \|m \|, \| \nabla_x m \|+ \|m \| \in L^1(\Q)$.
The minimiser $\hat \theta^{\dksd}_n$ of $\hvf \dksd_{K,m}(\{X_i\}_{i=1}^n,\P_{\theta})$ exists eventually, and converges almost surely to the minimiser $\theta^*$ of $\dksd_{K,m}(\Q,\P_{\theta})$.
\end{Theorem}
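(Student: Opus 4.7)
The plan is to exploit the quadratic structure of the objective. Because $p_\theta$ lies in an exponential family, the Stein kernel $k^0_\theta(x,y)=\theta^\top A(x,y)\theta + v(x,y)^\top\theta + c(x,y)$ is a polynomial of degree at most $2$ in $\theta$, so
$\hvf\dksd_{K,m}(\{X_i\}_{i=1}^n,\P_\theta)^2 = \theta^\top \hat A_n \theta + \hat v_n^\top \theta + \hat c_n$,
where $\hat A_n,\hat v_n,\hat c_n$ are the $U$-statistics with kernels $A,v,c$. The population objective $\dksd_{K,m}(\Q,\P_\theta)^2$ is the corresponding quadratic with coefficients $A_*=\Q\otimes\Q[A]$, $v_*=\Q\otimes\Q[v]$, $c_*=\Q\otimes\Q[c]$.

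First I would verify that the hypotheses $\|\phi\|,\ \|\nabla_x b\|\|m\|,\ \|\nabla_x m\|+\|m\|\in L^1(\Q)$, combined with boundedness of $K$ and its derivatives up to order $2$ by some $K_\infty$, make each component of $A$, $v$ and $c$ integrable under $\Q\otimes\Q$. For $A$ this is immediate: $\|A(x,y)\|\leq K_\infty\|\phi(x)\|\|\phi(y)\|$, hence $\|A\|_{L^1(\Q\otimes\Q)}\leq K_\infty \|\phi\|_{L^1(\Q)}^2<\infty$. The remaining summands in $v$ and $c$ decompose into products of the form $K_\infty \cdot f(x) g(y)$ with $f,g$ each bounded by one of the three $L^1(\Q)$-controlled quantities listed in the hypotheses. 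Hoeffding's strong law for $U$-statistics then yields $\hat A_n\to A_*$, $\hat v_n\to v_*$ and $\hat c_n\to c_*$ almost surely.

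By Lemma~\ref{lemma-A}, $A_*$ is symmetric positive definite, so the population objective is a strictly convex quadratic with unique minimiser $\theta^*=-\tfrac12 A_*^{-1}v_*$. Because $\lambda_{\min}$ is continuous on the space of symmetric matrices, $\lambda_{\min}(\hat A_n)\to\lambda_{\min}(A_*)>0$ almost surely, so on an event of full probability, for all $n$ sufficiently large the matrix $\hat A_n$ is positive definite and the empirical objective admits the unique minimiser $\hat\theta_n^{\dksd}=-\tfrac12\hat A_n^{-1}\hat v_n$. Continuity of matrix inversion on the open set of invertible matrices then gives $\hat\theta_n^{\dksd}\to\theta^*$ almost surely, which is the desired strong consistency.

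The main obstacle I anticipate is the bookkeeping in the second paragraph: the explicit formulas for $v$ and $c$ mix $\nabla_x b$, $\nabla\cdot m$, $m$ and up-to-second derivatives of $K$, so one has to expand each of the five or six summands, apply the uniform bound $K_\infty$ to absorb all $K$-derivatives, and match the remaining factors against the product structure of the $L^1(\Q)$ hypotheses. Once this verification is complete, the rest of the argument reduces to the closed-form minimiser of a strictly convex quadratic plus a standard continuity argument, and no uniform law over $\Theta$ or compactness assumption on $\Theta$ is required.
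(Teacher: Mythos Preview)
Your proposal is correct and follows essentially the same route as the paper: both exploit the quadratic structure $\theta^\top A_n\theta+v_n^\top\theta+c_n$, verify $L^1(\Q\otimes\Q)$-integrability of $A,v,c$ from the stated hypotheses and the uniform bound $K_\infty$, invoke Hoeffding's strong law for $U$-statistics to get $A_n\to A_*$ and $v_n\to v_*$ a.s., use the lemma that $A_*$ is symmetric positive definite, and conclude $\hat\theta_n=-\tfrac12 A_n^{-1}v_n\to -\tfrac12 A_*^{-1}v_*=\theta^*$ via continuity of matrix inversion. Your use of continuity of $\lambda_{\min}$ to argue eventual positive definiteness of $A_n$ is slightly more explicit than the paper's phrasing, but the argument is the same.
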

\begin{proof}

Let $X_i: \Omega \to \X \subset \R^d$ be independent $\Q$-distributed random vectors. 
The $U$-statistic $A_n\defn \frac 2 {n(n-1)}\sum_{1\leq i<j \leq n}A(X_i,X_j)$ is symmetric semi-definite. 
Since $ \int_\X \| A \| \dd \Q \otimes \Q< \infty$, by theorem 1 \cite{hoeffding1961strong} the components of $A_n$ converge to the components of $B$ almost surely, and since the matrix inverse is a continuous map, 
by the continuous mapping theorem the components of $A_n^{-1}$ (the inverse exists eventually) converge almost surely to $B^{-1}$.
Hence the minimiser of $\hvf \dksd_{K,m}(\{X_i\}_{i=1}^n,\P_{\theta})^2=\theta^{\top}A_n\theta+v_n^{\top}\theta+c$ 
where $v_n \defn \frac 2 {n(n-1)} \sum_{1\leq i<j \leq n}v(X_i,X_j)$
exists eventually.
\begin{talign}
 |A(x,y)| &\leqsim K_\infty\|\phi(x) \|  \| \phi(y) \| \\
\| v \| & \leqsim 
 K_\infty \|\nabla_y b \| \|m(y) \| \| \phi(x)\|+ K_\infty \|\nabla_x b \| \|m(x) \| \| \phi(y)\| \\
 & + (\| \nabla_x m \| + \| m(x) \|)K_\infty \| \phi(y) \|
 +
(\| \nabla_y m \| + \| m(y) \|)K_\infty \| \phi(x) \|
 \\
 |c|& \leqsim K_\infty \|\nabla_x b\|\| m(x)\|\| m(y)\| \|\nabla_y b\|+ K_\infty( \| \nabla_x m \| + \| m(x) \|) \|\nabla_y  m\|+\\
 &+ K_{\infty} \| m(y) \|(1+ \| m(x) \|+ \| \nabla_x m \|) \\
 &+K_\infty( \| \nabla_y m \| + \| m(y) \|) \|\nabla_x  m\|  \| \nabla_x b \|
 + K_\infty( \| \nabla_x m \| + \| m(x) \|) \|\nabla_y  m\|  \| \nabla_y b \|
\end{talign}
and it follows from the integrability assumptions that $ \Q \otimes \Q |\stein_\theta |<\infty$. 
Since the product and sum of random variables that converge a.s. converge a.s.,
we have that $\hat \theta^{\dksd}_n \rightarrow \theta^*$ a.s.,
\begin{talign}
\hat \theta^{\dksd}_n =- \frac1 2 A_n^{-1} v_n \xrightarrow[]{a.s.} - \frac1 2 B^{-1} v =\theta^*.
\end{talign}
\end{proof}

\subsubsection{Asymptotic Normality of the DKSD Estimator}

We now consider the distribution of $\sqrt{n}(\hat \theta^{\dksd}_n-\theta^*)$. 
Recall that $A \in \Gamma(\X,\R^{m\times m}),v \in \Gamma(\X, \R^{m })$, and  for $n$ large enough $A_n^{-1}$ exists a.s., and $\hat \theta^{\dksd}_n=- \frac1 2 A_n^{-1} v_n$.

\begin{Theorem} 
Suppose  $  \|\phi\|, \|\nabla_x b \| \|m \|, \| \nabla_x m \|+ \|m \| \in L^2(\Q)$.
Then the $\dksd$ estimator is asymptotically normal.

\end{Theorem}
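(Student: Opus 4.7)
The plan is to exploit the quadratic structure of the DKSD objective in the exponential family. Since $\stein_\theta(x,y) = \theta^\top A(x,y)\theta + v(x,y)^\top\theta + c(x,y)$, the $U$-statistic approximation has the explicit minimiser $\hat\theta^{\dksd}_n = -\tfrac{1}{2}\bar A_n^{-1}\bar v_n$, where $\bar A_n$ and $\bar v_n$ denote the symmetric $U$-statistics of $A$ and $v$ respectively (note that $A(y,x)=A(x,y)^\top$ forces $\bar A_n$ to be symmetric after pair-wise symmetrisation). Likewise the population minimiser is $\theta^*=-\tfrac{1}{2}B^{-1}\mu_v$, with $B=\Q\otimes\Q[A]$ and $\mu_v=\Q\otimes\Q[v]$, satisfying the first-order optimality relation $\mu_v+2B\theta^*=0$.

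First, I would rewrite the centred estimator as a rescaled $U$-statistic:
\begin{talign*}
\sqrt{n}(\hat\theta^{\dksd}_n-\theta^*)
=-\tfrac{1}{2}\bar A_n^{-1}\sqrt{n}\bigl(\bar v_n + 2\bar A_n\theta^*\bigr)
=-\tfrac{1}{2}\bar A_n^{-1}\sqrt{n}\,\nabla_\theta \hvf\dksd_{K,m}(\{X_i\}_{i=1}^n,\P_\theta)^2\big|_{\theta^*}.
\end{talign*}
The kernel $h_{\theta^*}(x,y):=2A(x,y)\theta^*+v(x,y)=\nabla_\theta\stein_{\theta^*}(x,y)$ has zero mean under $\Q\otimes\Q$ by first-order optimality. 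Using the same pointwise bounds on $\|A\|$ and $\|v\|$ as in the previous theorem, the strengthened $L^2(\Q)$ assumptions on $\|\phi\|$, $\|\nabla_x b\|\|m\|$, and $\|\nabla_x m\|+\|m\|$ show (via Cauchy--Schwarz in $\Q\otimes\Q$, since each bound factors as a product of a function of $x$ and a function of $y$) that $\|h_{\theta^*}\|\in L^2(\Q\otimes\Q)$. Hoeffding's CLT for vector-valued $U$-statistics (applied exactly as in Theorem \ref{DKSD-normality}) then yields
\begin{talign*}
\sqrt{n}\,\nabla_\theta \hvf\dksd^2|_{\theta^*} \xrightarrow{d} \mathcal N(0,4\Sigma_\dksd),
\end{talign*}
where $\Sigma_\dksd=\int(\int \nabla_\theta\stein_{\theta^*}(x,y)\dd\Q(y))\otimes(\int\nabla_\theta \stein_{\theta^*}(x,z)\dd\Q(z))\dd\Q(x)$.

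Second, the strong consistency proof already established $\bar A_n\to B$ a.s., and Lemma \ref{lemma-A} gives invertibility of $B$, so by the continuous mapping theorem $\bar A_n^{-1}\to B^{-1}$ almost surely. Slutsky's theorem combined with the convergence above produces
\begin{talign*}
\sqrt{n}(\hat\theta^{\dksd}_n-\theta^*)\xrightarrow{d}-\tfrac{1}{2}B^{-1}\mathcal N(0,4\Sigma_\dksd)=\mathcal N(0,B^{-1}\Sigma_\dksd B^{-1}).
\end{talign*}
Finally, identifying $B$ with the information tensor $g_\dksd(\theta^*)$ from \cref{information-dksd} (a direct calculation using $\delta_{p_\theta,p_\alpha}=\phi(\theta-\alpha)$ so that $\dksd^2(\P_\alpha,\P_\theta)=(\theta-\alpha)^\top B_\alpha(\theta-\alpha)$ and then differentiating via $g_{ij}=-\tfrac{1}{2}\partial_{\alpha^i}\partial_{\theta^j}\dksd^2|_{\alpha=\theta}$) gives the advertised sandwich form $g_\dksd^{-1}(\theta^*)\Sigma_\dksd g_\dksd^{-1}(\theta^*)$.

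The main obstacle is upgrading the $L^1$ envelope bounds from the consistency proof to $L^2(\Q\otimes\Q)$ envelopes required by the $U$-statistic CLT. This is the reason the theorem strengthens the integrability assumptions from $L^1(\Q)$ to $L^2(\Q)$; the products $\|\phi(x)\|\cdot\|\phi(y)\|$, $\|\nabla_x b(x)\|\|m(x)\|\cdot\|\phi(y)\|$, etc.\ controlling $\|A\|$ and $\|v\|$ are then products of $L^2(\Q)$ functions, which lie in $L^2(\Q\otimes\Q)$ by Fubini.
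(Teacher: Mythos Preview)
Your argument is correct, but it takes a different route from the paper's proof. The paper applies the multivariate $U$-statistic CLT jointly to the pair $(v_n,A_n)$, obtaining $\sqrt{n}\bigl((v_n,A_n)-(\mu_v,B)\bigr)\xrightarrow{d}\mathcal N(0,4\Sigma)$ with $\Sigma$ living in $(\R^m\times\R^{m\times m})^{\otimes 2}$, and then invokes the delta method for the map $g(x,y)=-\tfrac12 y^{-1}x$ to deduce asymptotic normality of $\hat\theta_n=g(v_n,A_n)$; the bulk of the proof is an explicit componentwise computation of $\nabla g(U)\Sigma\nabla g(U)^\top$. By contrast, you first rewrite $\sqrt{n}(\hat\theta_n-\theta^*)=-\tfrac12\bar A_n^{-1}\sqrt{n}\,U_n[h_{\theta^*}]$ with $h_{\theta^*}=\nabla_\theta\stein_{\theta^*}$, apply the $U$-statistic CLT only to this single centred kernel, and finish with Slutsky using $\bar A_n^{-1}\to B^{-1}$. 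Your approach is shorter, avoids the delta-method bookkeeping, and delivers the covariance directly in the sandwich form $B^{-1}\Sigma_\dksd B^{-1}=g_\dksd^{-1}\Sigma_\dksd g_\dksd^{-1}$ that matches Theorem~\ref{DKSD-normality}; the paper's delta-method expression is equivalent but less transparently so. The only point worth making explicit is that your identification $B=g_\dksd(\theta^*)$ uses the well-specified assumption $\Q=\P_{\theta^*}$, since $B$ integrates against $\Q\otimes\Q$ while $g_\dksd(\theta^*)$ integrates against $\P_{\theta^*}\otimes\P_{\theta^*}$.
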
\label{CLT-diffusion-Stein}
\begin{proof}
From the integrability assumptions, it follows that 
$v,A\in L^2(\Q\otimes \Q)$, and since $\X$ has finite $\Q\otimes \Q$-measure, $v,A \in L^1(\Q\otimes \Q)$.  
Assume first that $m=1$.
Hence the tuple $U_n\defn (v_n,A_n): \Omega \to \R^2$, with $\E[U_n]= (\int_\X v \Q \otimes \Q ,\int_\X A \Q \otimes \Q) \defn (U_1,U_2)$ , is asymptotically normal
\begin{talign}
\sqrt{n}(U_n-\E[U_n])\xrightarrow[]{d} \N(0,4\Sigma)
\end{talign}
where, setting $v^0 = v-U_1$ and $A^0 = A-U_2$
\begin{talign}
\Sigma &= \E \left[\left( \int_\X v^0(X,y) \dd \Q(y), \int_\X A^0(X,y) \dd \Q(y) \right) \otimes \left( \int_\X v^0(X,y) \dd \Q(y), \int_\X A^0(X,y) \dd \Q(y) \right)\right] \\
 &=\begin{pmatrix}
  \int_\X v^0(x,y) \dd \Q(y) \int_\X v^0(x,z) \dd \Q(z) \dd \Q(x)& 
  \int_\X v^0(x,y) \dd \Q(y) \int_\X A^0(x,z) \dd \Q(z) \dd \Q(x) \\
  \int_\X v^0(x,y) \dd \Q(y) \int_\X v^0(x,z) \dd \Q(z) \dd \Q(x) 
  & \int_\X A^0(x,y) \dd \Q(y) \int_\X A^0(x,z) \dd \Q(z) \dd \Q(x)
  \end{pmatrix}
\end{talign}
Since 
$\hat \theta^{\dksd}_n=g(U_n)$, $\theta^* = g(U)$ where $g(x,y) \defn -\frac1 2 x/y$, 
we can apply the delta method which states
\begin{talign}
\sqrt{n}(\hat\theta^{\dksd}_n-\theta^*)= \sqrt{n}(g(U_n)-g(U)) \xrightarrow[]{d} \N \left(0,4\nabla g(U) \Sigma \nabla g(U)^{\top}\right)
\end{talign}
and $\nabla g (U) = \left(-1/2U_2, U_1/2U_2^2\right)$.
Now let $m$ be arbitrary.
Since $A \in L^2(\Q)$ then setting 
$A^0 \defn A - \int_\X A \Q \otimes \Q$
we find 
\begin{talign}
 \sqrt{n}(A_n-\E[A_n]) \xrightarrow[]{d} \mathcal N\left(0, 4\Sigma_1 \right),\qquad \Sigma_1 \defn \int_\X \left[\int_\X A^0(x,y) \dd \Q(y) \otimes \int_\X A^0(x,y) \dd \Q(y) \right] \dd \Q(x)
\end{talign}
and similarly , with $v^0 \defn v- \int v \dd \Q \otimes \dd \Q$
\begin{talign}
 \sqrt{n}(v_n-\E[v_n]) \xrightarrow[]{d} \mathcal N\left(0, 4\Sigma_2 \right),\qquad \Sigma_2 \defn \int_\X \left[\int_\X v^0(x,y) \dd \Q(y) \otimes \int_\X v^0(x,y) \dd \Q(y) \right] \dd \Q(x).
\end{talign}
and 
\begin{talign}
 \sqrt{n}((v_n,A_n)-\E[(v_n,A_n)]) \xrightarrow[]{d} \mathcal N\left(0, 4\Sigma \right) 
 \end{talign}
 where 
 \begin{talign}
 \Sigma = \int_\X \left[ \left( \int_\X v^0(x,y) \dd \Q(y), \int_\X A^0(x,y) \dd \Q(y) \right) \otimes \left( \int_\X v^0(x,y) \dd \Q(y), \int_\X A^0(x,y) \dd \Q(y) \right)  \right] \dd \Q(x).
 \end{talign}
Let $\D \defn \R^m \times \R^{m \times m}$, which we equip with coordinates $z_{ijk}=(x_i,y_{jk})$.
Consider the function $g: \D \to \R^m$, $(x,y) \mapsto -\frac1 2 y^{-1}x$, so $g(v_n,A_n) = \theta^{\dksd}_n$. 
 Note $\Sigma \in\D \times \D$ and $\nabla g : \D \to \End\left( \D,\R^m\right) \cong \R^m \times \D$, so that $\nabla g(U) \Sigma \nabla g(U)^{\top} \in \R^{m \times m}$.
 First consider the matrix inversion $h(y)=y^{-1}$, so 
 $\nabla h(y) \in \R^{(m \times m) \times (m \times m)}$, and $\nabla h(y)_{(ij)(kr)} = \partial_{y^{kr}} h_{ij}$. Since 
 $h(y)_{ij}y_{jl} =\delta_{il}$
 we have $0=\partial_{kr} (h(y)_{ij}y_{jl})=\partial_{kr} (h(y)_{ij}) y_{jl}+  h(y)_{ij}\delta_{jk}\delta_{rl}=
 \partial_{kr} (h(y)_{ij}) y_{jl}+  h(y)_{ik}\delta_{rl}$
 and  
 \begin{talign}
 \nabla h(y)_{(is)(kr)}= \partial_{kr} (h(y)_{ij}) y_{jl}h(y)_{ls}=-h_{ik} \delta_{rl} h(y)_{ls}=-h(y)_{ik}  h(y)_{rs}
\end{talign} 
and clearly $f: x \mapsto x$, then $\nabla f(x) =1_{m\times m}$.
Moreover 
\begin{talign}
\partial_{y^{ab}} g_i(z)=\partial_{y^{ab}} \left(h(y)_{ij} f(x)_j \right)=\partial_{y^{ab}}(h(y)_{ij})x_j =-h(y)_{ia}h(y)_{bj}x_j, \quad  
\partial_{x^{l}} g_i(z)=h(y)_{il}
\end{talign}
Then 
\begin{talign}
(\nabla g(z) \Sigma)_{ir} &= \partial_v g_{i} \Sigma_{vr}=
g_{i,x^l}\Sigma_{x^lr}+g_{i,y^{ab}}\Sigma_{y^{ab}r}=h(y)_{il}\Sigma_{x^lr}+
\partial_{y^{ab}}(h(y)_{is})x_s \Sigma_{y^{ab}r} \\
&=h(y)_{il}\Sigma_{x^lr}-h(y)_{ia}  h(y)_{bs}x_s \Sigma_{y^{ab}r},
\end{talign}
so 
\begin{talign}
(\nabla g(z) \Sigma \nabla g(z)^{\top})_{ic}&= (\nabla g(z) \Sigma)_{ir} (\nabla g(z))_{cr}=
 (\nabla g(z) \Sigma)_{ir} \partial_r g_c \\
 &= h(y)_{il}\Sigma_{x^lr}\partial_r g_c-h(y)_{ia}  h(y)_{bs}x_s \Sigma_{y^{ab}r}\partial_r g_c
\end{talign}
with 
\begin{talign}
h(y)_{il}\Sigma_{x^lr}\partial_r g_c&=
h(y)_{il}\Sigma_{x^lx^b}\partial_{x^b} g_c+h(y)_{il}\Sigma_{x^ly^{as}}\partial_{y^{as}} g_c\\
&=h(y)_{il}\Sigma_{x^lx^b}h(y)_{cb}-h(y)_{il}\Sigma_{x^ly^{as}}h_{ca}(y)h(y)_{sj}x_j
\end{talign}
and 
\begin{talign}
-h(y)_{ia}  h(y)_{bs} &x_s \Sigma_{y^{ab}r}\partial_r g_c=-h(y)_{ia}  h(y)_{bs}x_s \left(\Sigma_{y^{ab}x^k}\partial_{x^k} g_c+\Sigma_{y^{ab}y^{ld}}\partial_{y^{ld}} g_c\right) \\
&=-h(y)_{ia}  h(y)_{bs}x_s \left(\Sigma_{y^{ab}x^k}h(y)_{ck}-\Sigma_{y^{ab}y^{ld}}h(y)_{cl}h(y)_{dj}x_j\right).
\end{talign}
Note we have 
\begin{talign}
\Sigma_{xx} 
&= \int_\X \int_\X v^0(x,y)\dd \Q(y)\otimes \int_\X v^0(x,z)\dd \Q(z) \dd \Q(x) \equiv \int_\X T(x) \otimes T(x) \dd \Q(x) \\
\Sigma_{xy} &=\int_\X \int_\X v^0(x,y)\dd \Q(y)\otimes \int_\X A^0(x,z)\dd \Q(z) \dd \Q(x)
\equiv \int_\X T(x) \otimes L(x) \dd \Q(x)
\\
\Sigma_{yy} &=\int_\X \int_\X A^0(x,y)\dd \Q(y)\otimes \int_\X A^0(x,z)\dd \Q(z) \dd \Q(x)
\equiv \int_\X L(x) \otimes L(x) \dd \Q(x)
\end{talign}
then
\begin{talign}
4\nabla g(U_1,U_2) \Sigma \nabla g(U_1,U_2)^{\top} &=  \int_\X (U_2^{-1}T) \otimes (TU_2^{-1}) \dd \Q\\
&-2 \int_\X \left( U_2^{-1}LU_2^{-1}U_1\right) \otimes \left(TU_2^{-1}\right) \dd \Q\\
&+\int_\X \left(U_2^{-1}LU_2^{-1}U_1 \right) \otimes \left(U_2^{-1}LU_2^{-1}U_1 \right) \dd \Q
\end{talign}
\end{proof}


\subsubsection{Diffusion Score Matching Asymptotics}

Consider the loss function
\begin{talign}
 L(x,\theta)  =  \metric{\nabla \log p_{\theta}}{ mm^{\top}\nabla\log p_{\theta}}  +2\left(\nabla \cdot( m m^{\top} ) \cdot \nabla \log p_{\theta} + \tr \left[mm^{\top}  \nabla^2\log p_{\theta} \right]\right).
\end{talign}
For the exponential family $L(x,\theta)=\theta^{\top}A \theta+ v^{\top}\theta+c$, where (we set $S=mm^{\top}$)
\begin{talign}
A&= \nabla T S \nabla T^{\top} \\
v^{\top} &= 2\nabla b \cdot S \nabla T^{\top} +2 \nabla \cdot S \cdot \nabla T^{\top}+2\tr\left[S \nabla^2 T_i \right]e_i \\
c& = \nabla b \cdot S \nabla b+2 \nabla \cdot S \cdot \nabla b +2 \tr[S \nabla \nabla b].
\end{talign}

\begin{theorem} Suppose $m$ is invertible and $\{\nabla T_i\}$ are linearly independent. Then if $A,v \in L^1(\Q)$, $\hat \theta^{\dsm}_n$ eventually exists and is strongly consistent.
If we also have  $A,v \in L^2(\Q)$, then $\hat \theta^{\dsm}_n$ 
is asymptotically normal.
\end{theorem}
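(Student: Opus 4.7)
The plan is to mirror the two-step argument used in \cref{CLT-diffusion-Stein} for DKSD, with the key simplification that the DSM loss reduces to an ordinary empirical mean rather than a $U$-statistic, so that the classical SLLN and multivariate CLT apply directly.

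First I would establish positive-definiteness of the population matrix $\bar A \defn \int_\X A(x) \dd \Q(x) = \int_\X \nabla T(x)\, S(x)\, \nabla T(x)^\top \dd \Q(x)$ with $S = mm^\top$. Symmetry is immediate. For any $v \neq 0$,
\begin{talign*}
v^\top \bar A v = \int_\X \| m(x)^\top \nabla T(x)^\top v \|_2^2 \dd \Q(x) \geq 0,
\end{talign*}
with equality only when $m^\top \nabla T^\top v = 0$ $\Q$-a.e. Invertibility of $m$ then forces $\nabla T^\top v = 0$ $\Q$-a.e., and linear independence of the rows of $\nabla T$ yields $v = 0$, a contradiction; hence $\bar A$ is strictly positive definite. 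Writing the empirical loss as $\hat L_n(\theta) = \theta^\top A_n \theta + v_n^\top \theta + c_n$ with $A_n \defn n^{-1}\sum_i A(X_i)$ and $v_n \defn n^{-1}\sum_i v(X_i)$, the unique minimiser (whenever $A_n$ is invertible) is $\hat\theta_n^{\dsm} = -\tfrac{1}{2} A_n^{-1} v_n$, and analogously $\theta^* = -\tfrac{1}{2} \bar A^{-1} \bar v$ with $\bar v \defn \int_\X v \dd \Q$.

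For strong consistency, since $A,v \in L^1(\Q)$ the SLLN gives $A_n \xrightarrow{a.s.} \bar A$ and $v_n \xrightarrow{a.s.} \bar v$ componentwise. Matrix inversion is continuous on the open set of invertible matrices, so the continuous mapping theorem implies $A_n$ is invertible eventually almost surely and $A_n^{-1} \xrightarrow{a.s.} \bar A^{-1}$. Combining these gives $\hat\theta_n^{\dsm} = -\tfrac{1}{2} A_n^{-1} v_n \xrightarrow{a.s.} -\tfrac{1}{2} \bar A^{-1} \bar v = \theta^*$.

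For asymptotic normality, the assumption $A, v \in L^2(\Q)$ permits an application of the multivariate CLT to the (vectorised) tuple $(v_n, A_n)$, yielding $\sqrt n \big( (v_n, A_n) - (\bar v, \bar A) \big) \xrightarrow{d} \mathcal N(0, \Sigma)$, where $\Sigma$ is the joint covariance of $(v(X), A(X))$. I would then apply the delta method to the smooth map $g:(x,y) \mapsto -\tfrac{1}{2} y^{-1} x$, which is differentiable in a neighbourhood of $(\bar v, \bar A)$ since $\bar A$ is invertible, with Jacobian computed exactly as in the proof of \cref{CLT-diffusion-Stein}. This delivers $\sqrt n (\hat\theta_n^{\dsm} - \theta^*) \xrightarrow{d} \mathcal N\big(0,\, \nabla g(\bar v, \bar A)\, \Sigma\, \nabla g(\bar v, \bar A)^\top\big)$. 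The only substantive step is the positive-definiteness of $\bar A$; the remainder is routine bookkeeping. The argument is in fact simpler than \cref{CLT-diffusion-Stein} for DKSD because $A(x)$ and $v(x)$ depend on a single variable rather than on a pair, so no IPD-kernel/signed-measure argument and no $U$-statistic limit theorem are required.
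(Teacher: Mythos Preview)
Your proposal is correct and follows essentially the same route as the paper: establish positive definiteness of the population matrix $\bar A$, apply the SLLN and continuous mapping theorem for strong consistency, then the multivariate CLT on the stacked tuple $(v_n,A_n)$ followed by the delta method with $g(x,y)=-\tfrac12 y^{-1}x$ for asymptotic normality. The only cosmetic difference is that the paper argues positive definiteness via a rank identity $\operatorname{rank}(A)=\operatorname{rank}(\nabla T m)=\operatorname{rank}(\nabla T)$ on the pointwise matrix, whereas you work directly with the quadratic form $v^\top\bar A v=\int\|m^\top\nabla T^\top v\|^2\dd\Q$; both reach the same conclusion.
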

\begin{proof}
Let $\M \defn \int A \dd \Q$, $H \defn \int v \dd \Q$.
If  $A=\nabla T mm^{\top} \nabla T^{\top}= \nabla T m (\nabla T m)^{\top}$
so $\rank{A}=\rank{\nabla T m (\nabla T m)^{\top}}=\rank{\nabla T m} =\rank{\nabla T} =\rank{\nabla T^{\top}}$ if $m$ is invertible.
So if  the vectors $\{\nabla T_i\}$ are linearly independent, then $\nabla T^{\top}$ has full column rank.
Then $A$ it is symmetric positive (strictly) definite
and the minimum of $L(\theta) \defn \int L(x,\theta) \dd \Q(x)$ is
$\theta^* = -\frac1 2 \M^{-1}H$ which for sufficiently large $n$ can be estimated by the random variable $\hat \theta^{\dsm}_n \defn -\frac1 2 \M_n^{-1}H_n$ which converges a.s. to $\theta$.

We consider the tuple $U_n\defn (H_n,\M_n)$, so $\E[U_n] = (H,\M)$.
Since $A,v \in L^2(\Q)$, then 
\begin{talign}
 \sqrt{n}\left(U_n -(H,\M) \right)
 \xrightarrow[]{d} \N \left(0, \Gamma\right)
 \end{talign}
where, setting $v^0= v-H$, $A^0 = A-\M$
\begin{talign}
\Gamma=\E \left[ (v^0,A^0)\otimes (v^0,A^0) \right].
\end{talign}
Let $\D \defn \R^m \times \R^{m\times m}$, and consider $g:\D \to \R^m$, defined by $g(x,y) = -\frac1 2 y^{-1}x$. 
Using the Delta method
\begin{talign}
\sqrt{n}(\hat \theta^{\dsm}_n-\theta^*) \xrightarrow[]{d} \N \left(0,4\nabla g(H,\M) \Gamma \nabla g(H,\M)^{\top}\right)
\end{talign}
where, proceeding as in \cref{CLT-diffusion-Stein}, we find 
\begin{talign}
4\nabla g(H,\M) \Gamma \nabla g(H,\M)^{\top} &=  \int_\X (\M^{-1}v^0 ) \otimes (v^0\M^{-1}) \dd \Q\\
&-2 \int_\X \left( \M^{-1}A^0\M^{-1}H\right) \otimes \left(v^0\M^{-1}\right) \dd \Q\\
& + \int_\X \left(\M^{-1}A^0\M^{-1}H \right) \otimes \left(\M^{-1}A^0\M^{-1}H \right) \dd \Q
\end{talign}

\end{proof}



\section{Proofs of Robustness of Minimum Stein Discrepancy Estimators}\label{appendix:robustness}

In this section, we provide conditions on the Stein operator (and Stein class) to obtain robust estimators in the context of DKSD and DSM.  In particular we prove \cref*{prop:robustness_dksd} and derive the influence function of DSM.

\subsection{Robustness of Diffusion Kernel Stein Discrepancy}

Let $T: \mathcal P_\Theta \to \Theta$ with $T(\P)= \argmin_{\Theta} \dksd_{K,m}(\P \| \P_\theta)$ be defined by 
$\IF(z,\Q)\defn \lim_{t\to 0} (T(\Q+t(\delta_z-\Q))-T(\Q))/t$.
Denote $\Q_t=\Q+t(\delta_z-\Q)$, 
$\theta_t = T(\Q_t)$, $\theta_0 =T(\Q)$.
Note that by the first order optimality condition:
\begin{talign}
\nabla_\theta \int_\X \int_\X \stein \Q_t \otimes \Q_t |_{\theta_t}= \nabla_{\theta_t}  \dksd_{K,m}(\Q_t \| \P_\theta) =0.
\end{talign}
By the MVT, there exists $\bar \theta$ on the line joining $\theta_0$ and $\theta_t$ for which
\begin{talign}
 0 = \int_\X \int_\X \nabla_{\theta} \stein |_{\theta_0} \Q_t \otimes \Q_t+ \int_\X \int_\X \nabla_{\theta} \nabla_{\theta} \stein |_{\bar \theta} \Q_t \otimes \Q_t (\theta_t-\theta_0).
\end{talign}
Expanding 
\begin{talign}
\Q_t \otimes \Q_t \nabla_{\theta} \stein |_{\theta_0} 
& = t^2(\delta_z-\Q)\otimes (\delta_z-\Q) \nabla_\theta \stein |_{\theta_0} + 2t \Q_y  \nabla_\theta \stein |_{\theta_0}(z,y)
\end{talign}
where we have used the optimality condition. On the other hand 
\begin{talign}
 \Q_t \otimes \Q_t \nabla_{\theta} \nabla_{\theta} \stein |_{\bar \theta} =
 (1-2t)\Q\otimes \Q \nabla_{\theta} \nabla_{\theta} \stein |_{\bar \theta}+t^2(\delta_z -\Q)\otimes (\delta_z -\Q)\nabla_{\theta} \nabla_{\theta} \stein |_{\bar \theta}+2t \Q_y \nabla_{\theta} \nabla_{\theta} \stein |_{\bar \theta}(z,y).
\end{talign}
Hence 
\begin{talign}
\Q_y  \nabla_\theta \stein |_{\theta_0}(z,y) = \half \left( 
 (1-2t)\Q\otimes \Q \nabla_{\theta} \nabla_{\theta} \stein |_{\bar \theta}
 +2t \Q_y \nabla_{\theta} \nabla_{\theta} \stein |_{\bar \theta}(z,y)
\right)\frac{\theta_t-\theta_0}{t}+O(t),
\end{talign}
and taking the limit $t\to 0$, $\bar \theta \to \theta_0$ and using a derivation as in the proof of \cref{DKSD-normality}
\begin{talign}
\Q_y  \nabla_\theta \stein |_{\theta_0}(z,y)
& = \half 
\int_{\X} \int_{\X} \nabla_{\theta} \nabla_{\theta} \stein |_{\theta_0} \dd \Q \otimes \dd \Q \IF(z,\Q) =  g(\theta_0) \IF(z,\Q)
\end{talign}
hence the influence function is given by
\begin{talign}
\IF(z,\Q) = g(\theta_0)^{-1} \int_{\X}  \nabla_\theta \stein |_{\theta_0}(z,y) \dd \Q(y).
\end{talign}

 We aim to show the estimator is $B$-robust, that is $z \mapsto \|\mbox{IF}(z, \mathbb{Q})\|$ is bounded.  Suppose that the additional assumptions hold. Then there exists a function $c$ such that $\int \metric{s_p(x)}{K(x,y)\nabla_{\theta_0}s_p(y)}\mathbb{Q}(dy) \leq \lVert s_p(x) \rVert c(x)$ which is bounded in $x \in \mathcal{X}$.   Following a similar argument, and using the assumptions, a similar limit will hold for all terms in $\int \nabla_{\theta_0} \stein (z,y) \dd \Q(y)$.  It follows that $\sup_{z\in \mathcal{X}}\|\mbox{IF}(z, \mathbb{Q})   \| < \infty$.

\subsection{Robustness of Diffusion Score Matching}

The scoring rule $S:\X \times \mathcal P_\X \to \R$ of $\dsm$ is 
\begin{talign}
S(x,\P_{\theta}) \defn \half \left\|  m^{\top}\nabla_x\log p_{\theta} \right\|_{2}^2 
+\nabla \cdot\left( m m^{\top} \nabla \log p_{\theta}\right)(x)
\end{talign}
Indeed the proof of \cref{prop:SM_is_Stein} we have
\begin{talign}
\int_\X \left\| m^{\top} \nabla \log q \right\|^2 \dd \Q =  -  \int_\X \nabla \cdot\left( m m^{\top} \nabla \log q \right) \dd \Q.
\end{talign} 
which implies $\Q S(\cdot ,\Q)= -\half
\int_{\X} \left\| m^{\top} \nabla \log q \right\|^2 \dd \Q$, so 
\begin{talign}
\Q S(\cdot ,\P_\theta)-\Q S(\cdot ,\Q)
&= 
 \int_{\X} \left(\half \left\|  m^{\top}\nabla_x\log p_{\theta} \right\|_{2}^2 
 +\half  \left\| m^{\top} \nabla \log q \right\|^2 
+\nabla \cdot\left( m m^{\top} \nabla \log p_{\theta}\right) \right)\dd \Q\\
& =\dsm_m( \Q\| \P_\theta).
\end{talign}
From 4.2 \cite{Dawid2014} the influence function is then  $\IF(x,\P_{\theta})= g_{\text{DSM}}(\theta)^{-1}s(x,\theta)$,
where 
\begin{talign}
s(x,\theta) &\defn \nabla_\theta S(x,\theta)=\half \nabla_\theta\|  m^{\top}\nabla_x\log p_{\theta} \|_{2}^2 
+\nabla_\theta \nabla_x \cdot\left( m m^{\top} \nabla_x \log p_{\theta}\right)\\
&=\half \nabla_\theta\|  m^{\top}\nabla_x\log p_{\theta} \|_{2}^2 
+\nabla_\theta \left( \metric{\nabla_x \cdot( m m^{\top} )}{ \nabla \log p_\theta}+ \tr \left[mm^{\top}  \nabla^2_x\log p_\theta \right]\right)\\
&=
\nabla_x \nabla_\theta\log p_{\theta} mm^{\top}\nabla_x\log p_{\theta}
+ (\nabla_x \nabla_\theta  \log p_\theta )\nabla_x \cdot( m m^{\top} )+ \Tr[mm^{\top}\nabla_{x}  \nabla_{x} ] \nabla_\theta\log p_\theta 
\end{talign}
 and where
$g_{\text{DSM}}(\theta) \defn \P_\theta \nabla_\theta  \nabla_\theta S(\cdot,\theta)$ is the information metric associated with DSM. 
Hence the estimator is bias-robust iff 
$x \mapsto s(x,\theta_*^{\dsm})$ is bounded.

\section{Additional Numerical Experiments}\label{appendix:experiments}

In this section, we provide further details and expand on the numerical experiments in the main paper.

\subsection{Efficiency of Minimum SD Estimators for Scale Parameters of Symmetric Bessel distributions} \label{appendix:symbessel}

In this section, we extend the results from the main text and compares SM with KSD based on a Gaussian kernel and a range of lengthscale values for the scale parameter of the symmetric Bessel distribution. The results, given in \cref{fig:symmetric_bessel}, are also based on $n=500$ IID realisations in $d=1$. Similar results to those for the location parameter are obtained: KSD can deal with rougher densities, as illustrated when $s=0.6$.

\begin{figure}[H]
\begin{center}
\includegraphics[width=0.18\textwidth,clip,trim = 0 0 0 0]{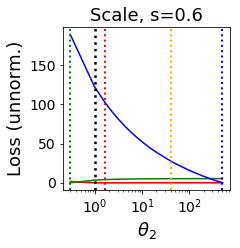}
\includegraphics[width=0.19\textwidth,clip,trim = 0 0 9.7cm 0]{Figures/symbessel_s1_scale.png}
\includegraphics[width=0.17\textwidth,clip,trim = 0 0 0 0]{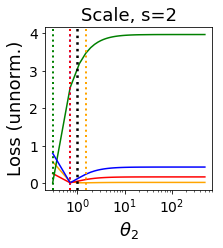}
\includegraphics[width=0.22\textwidth,clip,trim = 9cm 0 0 0]{Figures/symbessel_s1_scale.png}
\vspace{-5mm}
\end{center}
\caption{\textit{Minimum SD Estimators for the Scale of a Symmetric Bessel Distribution}. We consider the case where $\theta_1^*=0$ and $\theta_2^*=1$ and $n=500$ for a range of smoothness parameter values $s$ in $d=1$.}
\vspace{-3mm}
\label{fig:symmetric_bessel_scale}
\end{figure}

\subsection{Bias Robustness of Minimum SD Estimators for the Symmetric Bessel and Non-standardised Student-t Distributions} \label{appendix:experiments_robustness}

In this section, we explore the robustness of minimum SD estimators for the two other examples in the main paper: the symmetric Bessel distribution ($\nu =1000$) and the non-standardised student-t distribution. We once again select a diffusion matrix of the form $m(x) = 1/(1+\|x\|^\alpha)$, and fix $\alpha=1$ in both cases. This choice is refered to as ``robust DKSD''. On the other hand, we call ``efficient DKSD'' the DKSDs with choices of $m$ as highlighted in the main text (and which were chosen to improve efficiency in both cases). The results are provided in \cref{fig:twoexamples_robust}. In each case, we used $n=500$ data points, $80$ of which were corrupted by a Dirac at some value of given on the x-axis. Both in the student-t and symmetric Bessel distribution, we notice that the ``efficient DKSD'' has an $l_1$ error which grows with the value of the Dirac, whereas the ``robust DKSD'' is bounded as a function of this Dirac.
\begin{figure}[H]
\begin{center}
\includegraphics[width=0.2\textwidth,clip,trim = 0 0 4.9cm 0]{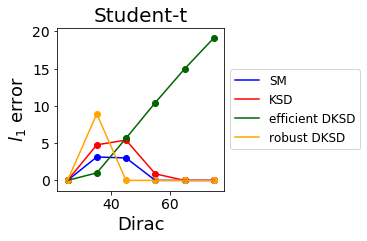}
\hspace{5mm}
\includegraphics[width=0.33\textwidth,clip,trim = 0 0 0 0]{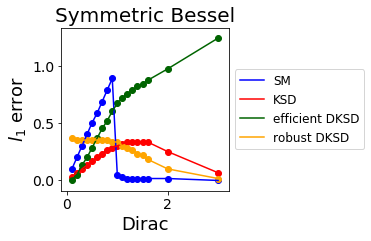}
\vspace{-5mm}
\end{center}
\caption{\textit{The Robustness of Minimum SD Estimators for the Symmetric Bessel and Student-t Distributions}. \textit{Left:} Student-t distribution. \textit{Right:} Symmetric Bessel distribution. }
\vspace{-3mm}
\label{fig:twoexamples_robust}
\end{figure}

\end{document}